\renewenvironment{proof}{\noindent{\sffamily{\textbf{Proof :}}}}{\begin{flushright}$\square$\end{flushright}}
\newcommand{\IE}{\mathbb{E}}
\newcommand{\IN}{\mathbb{N}}
\newcommand{\IR}{\mathbb{R}}
\newcommand{\IT}{\mathbb{T}}
\newcommand{\drm}{\mathrm d}
\newcommand{\CS}{\mathcal S}
\newcommand{\CC}{\mathcal C}
\newcommand{\CH}{\mathcal H}
\newcommand{\CB}{\mathcal B}
\newcommand{\SF}{\mathscr{F}}
\newcommand{\eps}{\varepsilon}
\newcommand{\Wick}[1]{\mathbf{:}#1\mathbf{:}}
\newcommand\N{\mathbb{N}}
\newcommand\T{\mathbb{T}}
\newcommand\R{\mathbb{R}}
\newcommand\C{\mathbb{C}}
\newcommand{\dd}{\mathrm{d}}
\newcommand{\enstq}[2]{\left\{#1~\middle|~#2\right\}}
\renewcommand{\Re}{\operatorname{Re}}
\renewcommand{\Im}{\operatorname{Im}}
\newcommand\loc{\mathrm{loc}}
\newcommand\supp{\mathrm{supp}}
\definecolor{ocre}{RGB}{64,123,121}
\definecolor{S}{rgb}{0.0,0.5,0.0}
\newcounter{item}
\numberwithin{item}{section}
\newtheorem{theorem}[item]{\sffamily Theorem}
\newtheorem{definition}[item]{\sffamily Definition}
\newtheorem{proposition}[item]{\sffamily Proposition}
\newtheorem{lemma}[item]{\sffamily Lemma}
\newtheorem*{theorem*}{\sffamily Theorem}
\newtheorem*{definition*}{\sffamily Definition}
\newtheorem*{proposition*}{\sffamily Proposition}
\newtheorem*{lemma*}{\sffamily Lemma}
\newtheorem*{corollary*}{\sffamily Corollary}
\titleformat{\section}{\centering\Large\bfseries}{\thesection \ --}{0.7em}{\Large\bfseries #1}
\titleformat{\subsection}{\centering\large\bfseries}{\thesubsection \ --}{0.4em}{\large\bfseries #1}
\titleformat{\subsubsection}{\centering\bfseries}{\thesubsubsection \ --}{0.4em}{\bfseries #1}
\let\emph\relax
\DeclareTextFontCommand{\emph}{\bfseries\em}
\providecommand{\keywords}[1]
{
	{\footnotesize	
	\textbf{Keywords --} #1}
}
\title{\bfseries The logarithmic Schrödinger equation with spatial white noise on the full space}
\author{Quentin CHAULEUR and Antoine MOUZARD}
\date{}
\begin{document}

\maketitle
\abstract{We solve the Schrödinger equation with logarithmic nonlinearity and multiplicative spatial white noise on $\IR^d$ with $d\le 2$. Because of the nonlinearity, the regularity structures and the paracontrolled calculus can not be used. To solve the equation, we rely on an exponential transform that has proven useful in the context of other singular SPDEs.}
\vspace{0.5cm}


\keywords{Logarithmic Schrödinger equation; Multiplicative white noise; Global well-posedness.}

\section{Introduction}

We consider the logarithmic Schrödinger equation with spatial white noise
\begin{equation} \tag{SlogNLS}\label{SlogNLS}
i \partial_t u = \Delta u + u \xi + \lambda u \log |u|^2
\end{equation}  
on $\R^d$ with initial data $u_0$ and $\xi$ the spatial white noise. We work in dimension $d\le2$, both in the focusing case $\lambda >0$ and the defocusing case $\lambda<0$. Since the introduction of regularity structures \cite{Hai14} and paracontrolled calculus \cite{GIP}, a large class of singular stochastic PDEs have been studied. While the first theories originally dealt with parabolic PDEs, these methods were adapted in order to solve dispersive singular PDEs such as the polynomial nonlinear Schrödinger equation with \cite{GUZ,Mouzard22,MouzardZachhuber22,Ugurcan22}. This approach relies on a construction of the Anderson Hamiltonian
\begin{equation*}
H=\Delta+\xi
\end{equation*}
as a self-adjoint operator which allows the resolution of linear and nonlinear associated evolution PDEs such as the Schrödinger equation. In particular, \cite{MouzardZachhuber22} obtained Strichartz inequalities on compact surfaces which allow the resolution of the cubic NLS equation with initial data in the energy space of $H$. Another approach was used by Debussche and Weber \cite{debussche2018} on $\T^2$, see also \cite{debussche2019,tzvetkov2022,TzvetkovVisciglia22,debussche2023} for more general results on $\T^2$ and $\R^2$. They consider an exponential transform first used for the Parabolic Anderson Model (PAM) equation on $\R^2$ by Hairer and Labbé \cite{hairer2015} with the new variable $v=e^Xu$ where $X$ is a random field solution to
\begin{equation*}
\Delta X=\xi.
\end{equation*}
In particular, this allows the resolution of different singular stochastic PDEs without regularity structures and paracontrolled calculus. This transformation was also used for example for constructive Quantum Field Theory \cite{JagannathPerkowski23,BDFT23one} or the Anderson form \cite{MatsudaZuijlen22} with the additional use of regularity structures. While the resolution of parabolic singular SPDEs relies on the regularizing properties of the heat semigroup, one uses the Hamiltonian structure of the equation and its conservative quantities in the dispersive case. In particular, this requires well-prepared initial data depending on the noise.

\medskip

In this work, we solve the logarithmic Schrödinger equation on $\R^2$ with spatial white noise using the exponential transform. In particular, the nonlinearity is not Lipschitz which prevents from using any generalized Taylor expansion or paracontrolled expansion. Note that in the context of nonlinear Schrödinger equations, the logarithmic nonlinearity can be seen as the formal limit $p \rightarrow 1$ of NLS equation with polynomial nonlinearities of the form $\lambda |u|^{p-1}u$, see the recent survey \cite[Section 7]{carles2022survey} for a better understanding of this point of view. From this perspective, this work can be considered as a natural continuation of the articles \cite{debussche2019,MouzardZachhuber22,tzvetkov2022,TzvetkovVisciglia22,debussche2023} where the authors successively increased the range of admissible nonlinearity powers $p>1$, using more and more involved tools from dispersive theory such as Strichartz estimates or modified energies techniques. In the deterministic logarithmic setting \cite{carles2022survey,carles2018}, Strichartz inequalities or even more involved dispersive properties has not yet shown to be useful, and one has to rely only on the Hamiltonian structure of the equation alongside particular algebraic properties of the nonlinearity. Hopefully, the exponential transform from \cite{hairer2015} interacts well with the logarithmic nonlinearity of \eqref{SlogNLS}. In fact, with the new variable $v=e^Xu$, the equation rewrites formally as
\begin{equation*}
i \partial_t v =\Delta v - 2\nabla X\cdot \nabla v+|\nabla X|^2v-2\lambda Xv+\lambda v\log|v|^2
\end{equation*}
which is better behaved since the roughest term $\xi$ is canceled. In two dimensions, the Hölder regularity of the noise is $-1-\kappa$ for any $\kappa>0$ hence $\nabla X$ is a distribution and we have to replace $|\nabla X|^2$ by a Wick product $\Wick{|\nabla X|^2}$ as for the polynomial equation, this is the renormalization procedure. Then one can almost surely solve this equation for initial data $v_0=e^Xu_0\in\CH^2$. The strategy is to consider a regularization $\xi_\eps$ of the noise and the equation
\begin{equation*}
i \partial_t v_\eps =\Delta v_\eps - 2\nabla X_\eps\cdot \nabla v_\eps+\Wick{|\nabla X_\eps|^2}v_\eps-2\lambda X_\eps v_\eps+\lambda v_\eps\log|v_\eps|^2
\end{equation*}
with $\Delta X_\eps=\xi_\eps$. A suitable $\CH^2$ bound for this solution has eventually to be obtained in order to recover a solution in the limit $\eps$ to $0$. In unbounded space, there is an additional difficulty due to the fact that the noise does not decrease at infinity. In fact, the noise $\xi$ has a sub-polynomial growth at infinity and only belongs to weighted Hölder spaces. A solution is to suppose some decrease for the initial data $v_0\in\CH_{\mu_0}^2$ and to propagate this to the solution, as done in \cite{debussche2019,debussche2023}. Note that in the case of the logarithmic Schrödinger equation, working with weighted initial data is natural as it already appears in the deterministic equation, see the work of Carles and Gallagher \cite{carles2018}. To the best of our knowledge, this is the first pathwise resolution of a singular SPDE with a nonlinearity that is not locally Lipschitz, see for example \cite{PerkowskiRosati21} where Perkowski and Rosati construct martingale solutions to an equation of this type.

\medskip

For the regularized equation, one still has to deal with the logarithmic nonlinearity and the growth of the potential on $\R^2$. Indeed, the noise $\xi$ belongs to $\CC_{\loc}^{-1-\kappa}$ for any $\kappa>0$ however it does not decrease at infinity since its law is invariant by translation. The regularized noise $\xi_\eps=\xi*\rho_\eps$ is a centered Gaussian field with covariance
\begin{equation*}
\IE\big[\xi_\eps(x)\xi_\eps(y)\big]=\int_{\R^2}\rho_\eps(x-z)\rho_\eps(y-z)\drm z
\end{equation*}
and one can prove that $\xi_\eps$ converges to $\xi$ in a weighted Hölder space. The idea is to introduce a new parameter $\delta>0$ which acts as a regularization of the nonlinearity with $v_{\eps,\delta}$ the solution to
\begin{equation*}
i \partial_t v_{\eps,\delta} =\Delta v_{\eps,\delta} - 2\nabla X_\eps\cdot \nabla v_{\eps,\delta}+\Wick{|\nabla X_\eps|^2}v_{\eps,\delta}-2\lambda X_\eps v_{\eps,\delta}+\lambda v_{\eps,\delta}\log\big(\delta+|v_{\eps,\delta}|^2\big)
\end{equation*}
which has a unique global solution for $v_0\in\CH^2$, see for example \cite{cazenave}. Adapting the method from Carles and Gallagher \cite{carles2018}, we are able to recover a unique solution in the limit $\delta$ to $0$.

\medskip

In this work, we solve \eqref{SlogNLS} on $\R^d$ with $d\le2$. In one dimension, this stochastic PDE is not singular and one can obtain a unique global solution for initial data $u_0\in L_{\mu_0}^2\cap\CH^1$. A natural question is the propagation of the regularity, in general in terms of Sobolev spaces. In the presence of white noise, the irregularity of the noise prevents the propagation of classical regularity, however one can still study the propagation of Sobolev spaces associated with the Anderson Hamiltonian. We prove using the exponential transform that one can almost propagate $e^{-X}\CH^2$ with $X$ the random field solution to $\Delta X=\xi$. In two dimensions, this is a singular stochastic PDE and we use the exponential transform to get global well-posedness for initial data $u_0\in e^{-X}\CH_{\mu_0}^2$. Our work applies to the same equation on $\T^d$ as no dispersive effects are used throughout the proofs, we focus on the unbounded case which is harder since one has to deal with weighted functional spaces.

\medskip

In Section \ref{SectionFunctionalSpaces}, we recall the needed tools from harmonic analysis with the Paley-Littlewood decomposition, weighted Besov spaces, product rule and duality. In Section \ref{SectionStoc}, we give the needed stochastic bounds on the random fields and the renormalization of the equation with the Wick product. In Section \ref{SectionDeter}, we solve the equation for a deterministic regular potential with a possible growth at infinity. In Section \ref{Section1D}, we solve \eqref{SlogNLS} on $\R$ and prove the propagation of regularity for initial data in the space $e^{-X}\CH^2$. Finally, we solve \eqref{SlogNLS} on $\R^2$ using the exponential transform in Section~\ref{Section2D}.

\section{Functional spaces} \label{SectionFunctionalSpaces}

Since the law of the white noise is invariant by translation, it does not decay at infinity. To deal with this, we work in weighted Lebesgue and Besov spaces. For any $\mu\in\R$ and $p\in[1,\infty]$, we consider
\begin{equation*}
\|u\|_{L_\mu^p(\R^d)}=\Big(\int_{\R^d}\langle x\rangle^\mu|f(x)|^p\drm x\Big)^{\frac{1}{p}}
\end{equation*}
with $\langle x\rangle=\sqrt{1+|x|^2}$. An important tool is the Paley-Littlewood decomposition
\begin{equation*}
u=\sum_{n\ge0}\Delta_nu
\end{equation*}
where 
\begin{equation*}
\big(\Delta_nu\big)(x):=2^{d(n-1)}\int_{\R^d}\chi\big(2^{n-1}(x-y)\big)u(y)\drm y
\end{equation*} 
with $\chi\in\CS(\R^d)$ and $\supp\ \widehat\chi\subset\{\frac{1}{2}\le|z|\le 2\}$ for $n\ge1$ and
\begin{equation*}
\big(\Delta_0u\big)(x):=\int_{\R^d}\chi_0(x-y)u(y)\drm y
\end{equation*}
with $\chi_0\in\CS(\R^d)$ and $\supp\ \widehat\chi_0\subset\{|z|\le 1\}$. Most of the following definitions and properties can be found in \cite[Section 4]{edmunds1996}. See also \cite{debussche2019} and references therein, in particular the book \cite{BCD}.

\medskip

\begin{definition}
Let $\mu,\alpha\in\R$ and $p,q\in[1,\infty)$. The weighted Besov space $\CB_{p,q,\mu}^\alpha$ is the set of distribution $u\in\CS'(\R^d)$ such that
\begin{equation*}
\|u\|_{\CB_{p,q,\mu}^\alpha}:=\Big(\sum_{n\ge0}2^{\alpha n q}\|\Delta_nu\|_{L_\mu^p(\R^d)}^q\Big)^{\frac{1}{q}}<\infty.
\end{equation*}
\end{definition}

\medskip

For $p=q=2$, one recovers the usual weighted Sobolev spaces $\CH_\mu^\alpha=\CB_{2,2,\mu}^\alpha$ with
\begin{equation*}
\|u\|_{\CH_\mu^\alpha(\R^d)}=\|(\SF^{-1}\langle\cdot\rangle^\alpha\SF)u\|_{L_\mu^2(\R^d)}.
\end{equation*}
We also denote the case $p=q=\infty$ as $\CC_\mu^\alpha=\CB_{\infty,\infty,\mu}^\alpha$ which corresponds to the usual weighted Hölder spaces for $\alpha\in\R_+\backslash\N$. Moreover, there exist constants $C_1,C_2>0$ depending on the spaces parameters such that
\begin{equation*}
C_1\|\langle\cdot\rangle^\mu u\|_{\CB_{p,q,0}^\alpha}\le\|u\|_{\CB_{p,q,\mu}^\alpha}\le C_2\|\langle\cdot\rangle^\mu u\|_{\CB_{p,q,0}^\alpha},
\end{equation*}
and $\CB_{p,q,0}^\alpha$ corresponds to the usual Besov spaces hence weighted Besov spaces satisfy the following embeddings.

\medskip

\begin{lemma} \label{besov_embeddings}
Let $p_1,p_2,q_1,q_2\in[1,\infty]$ and $\mu,\mu'\in\R$ such that $p_1\le p_2,q_1\le q_2$ and $\mu_1\ge\mu_2$. For all $\alpha\in \R$, one has the Besov embeddings
\begin{equation*}
\mathcal{B}^\alpha_{p_1,q_1,\mu_1}(\R^d) \subset  \mathcal{B}^{\alpha-d\left(\frac{1}{p_1}-\frac{1}{p_2} \right)}_{p_2,q_2,\mu_2}(\R^d).
\end{equation*}
as well as the Sobolev embeddings
\begin{equation*}
\forall\alpha\ge \frac{d}{2}-\frac{d}{p},\quad \CH^\alpha_{\mu}(\R^d) \subset  L^p_{\mu}(\R^d)
\end{equation*}
for $p\in[2,\infty]$. Finally, the embedding
\begin{equation*}
\CH_{\mu_1}^{\alpha_1}(\R^d)\hookrightarrow\CH_{\mu_2}^{\alpha_2}(\R^d)
\end{equation*}
is compact for $\alpha_1>\alpha_2$ and $\mu_1>\mu_2$.
\end{lemma}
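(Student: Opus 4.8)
The plan is to reduce each statement to the unweighted setting through the norm equivalence $\|u\|_{\CB^\alpha_{p,q,\mu}}\approx\|\langle\cdot\rangle^\mu u\|_{\CB^\alpha_{p,q,0}}$ recalled above, and then to invoke the standard Paley--Littlewood toolbox. For the Besov embedding I would first dispose of the weight: since $\langle x\rangle\ge1$ one has $\langle x\rangle^{\mu_2}\le\langle x\rangle^{\mu_1}$ as soon as $\mu_1\ge\mu_2$, hence $\|\Delta_n u\|_{L^{p_1}_{\mu_2}}\le\|\Delta_n u\|_{L^{p_1}_{\mu_1}}$ blockwise and $\CB^\alpha_{p_1,q_1,\mu_1}\subset\CB^\alpha_{p_1,q_1,\mu_2}$ for free. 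It then suffices to change $(\alpha,p,q)$ at the \emph{fixed} weight $\mu_2$, and applying the norm equivalence on both sides reduces this to the classical unweighted embedding applied to $g=\langle\cdot\rangle^{\mu_2}u$. There, the integrability loss is produced by Bernstein's inequality $\|\Delta_n g\|_{L^{p_2}}\lesssim 2^{dn(1/p_1-1/p_2)}\|\Delta_n g\|_{L^{p_1}}$, valid because $\Delta_n g$ is frequency localised at scale $2^n$; inserting it into the target norm exactly absorbs the shift $-d(1/p_1-1/p_2)$ of the regularity index, after which the summation index is lowered by the monotonicity $\ell^{q_1}\hookrightarrow\ell^{q_2}$ for $q_1\le q_2$.

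For the Sobolev embedding I would stay on the Besov scale. Writing $\CH^\alpha_\mu=\CB^\alpha_{2,2,\mu}$ and applying the embedding just obtained with $p_1=2$, $p_2=p$, $q_1=q_2=2$ and $\alpha=d(1/2-1/p)$ gives $\CH^\alpha_\mu\subset\CB^0_{p,2,\mu}$, so everything reduces to $\CB^0_{p,2,0}\subset L^p$ by the norm equivalence. For $2\le p<\infty$ this follows from the Littlewood--Paley characterisation $\|g\|_{L^p}\lesssim\|(\sum_n|\Delta_n g|^2)^{1/2}\|_{L^p}$ combined with Minkowski's inequality in $L^{p/2}$, legitimate since $p\ge2$, which yields $\|g\|_{L^p}\lesssim(\sum_n\|\Delta_n g\|_{L^p}^2)^{1/2}=\|g\|_{\CB^0_{p,2,0}}$. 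The endpoint $p=\infty$ is the only subtle point, as the borderline $\alpha=d/2$ genuinely fails; for $\alpha>d/2$ however the gain can be spent through $\CB^{\alpha-d/2}_{\infty,2}\hookrightarrow\CB^0_{\infty,1}\hookrightarrow L^\infty$, where the first inclusion sums a geometric series and the second is the triangle inequality, which is what the applications require.

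The compactness is the substantial part, for on $\R^d$ neither ingredient suffices alone: the regularity gain $\alpha_1>\alpha_2$ does not prevent mass from escaping to spatial infinity, and the weight gain $\mu_1>\mu_2$ does not prevent it from escaping to high frequencies. Given $(u_k)$ bounded in $\CH^{\alpha_1}_{\mu_1}$, the plan is a two-parameter truncation followed by diagonal extraction. Working with the equivalent norm $\|\langle\cdot\rangle^{\mu_2}\,\cdot\,\|_{\CH^{\alpha_2}}$, a smooth spatial cutoff $\chi_R$ equal to $1$ on $\{|x|\le R\}$ controls the tail because the multiplier $\langle\cdot\rangle^{\mu_2-\mu_1}(1-\chi_R)$ has operator norm $O(\langle R\rangle^{\mu_2-\mu_1})$, so $\|(1-\chi_R)u_k\|_{\CH^{\alpha_2}_{\mu_2}}\to0$ as $R\to\infty$ uniformly in $k$; dually the low-frequency projector $S_N=\sum_{n\le N}\Delta_n$ controls the high frequencies since $\|(1-S_N)u_k\|_{\CH^{\alpha_2}_{\mu_2}}\lesssim2^{-N(\alpha_1-\alpha_2)}\|u_k\|_{\CH^{\alpha_1}_{\mu_1}}\to0$ uniformly in $k$. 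On the remaining core the functions $\chi_R S_N u_k$ are supported in a fixed ball and bounded in $\CH^M$ for every $M$, multiplication by the cutoff preserving the $\CH^M$ bound enjoyed by the band-limited $S_N u_k$; the classical Rellich theorem on that bounded domain then makes $\{\chi_R S_N u_k\}_k$ precompact in $\CH^{\alpha_2}$, hence in $\CH^{\alpha_2}_{\mu_2}$ as the weight is bounded on the support. An $\varepsilon/3$ argument combining the two uniform tail bounds with a diagonal extraction over $R,N\to\infty$ then yields a subsequence that is Cauchy in $\CH^{\alpha_2}_{\mu_2}$. The main obstacle is exactly this simultaneous taming of the two escape mechanisms; once tightness in space and in frequency are both uniform in $k$, compactness is handed over to bounded-domain Rellich.
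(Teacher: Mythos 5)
The paper never proves Lemma \ref{besov_embeddings}: it is stated as background, with a pointer to \cite[Section 4]{edmunds1996}, \cite{BCD} and the references in \cite{debussche2019}, so there is no internal proof to compare yours against. What you wrote is a correct self-contained proof, and it is essentially the standard argument from those references, correctly adapted to the weighted setting. The reduction via the stated equivalence $\|u\|_{\CB^\alpha_{p,q,\mu}}\approx\|\langle\cdot\rangle^\mu u\|_{\CB^\alpha_{p,q,0}}$, with the weight lowered first by the pointwise monotonicity $\langle x\rangle^{\mu_2}\le\langle x\rangle^{\mu_1}$ and the indices then traded by Bernstein plus $\ell^{q_1}\hookrightarrow\ell^{q_2}$, is sound; so is $\CB^0_{p,2,0}\hookrightarrow L^p$ via the square-function characterisation and Minkowski (and $p\ge2$ is indeed used exactly where you say it is); and the compactness argument by simultaneous spatial and frequency truncation, Rellich on a ball, and an $\varepsilon$-net gives precisely the two uniform tail estimates one needs, namely the operator norm $O(\langle R\rangle^{\mu_2-\mu_1})$ of multiplication by $\langle\cdot\rangle^{\mu_2-\mu_1}(1-\chi_R)$ and the $2^{-N(\alpha_1-\alpha_2)}$ high-frequency decay. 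The only ingredients you use implicitly and should perhaps state are the routine facts that $\Delta_n$, $S_N$ and multiplication by smooth functions with bounded derivatives act boundedly (uniformly) on $\CH^s_\mu$ for every real $s$ and polynomial weight; these follow from Peetre's inequality and present no difficulty.

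One remark of substance: you correctly observe that the Sobolev embedding is genuinely false at the endpoint $(p,\alpha)=(\infty,\tfrac d2)$, which the formulation ``$\alpha\ge\tfrac d2-\tfrac dp$, $p\in[2,\infty]$'' formally includes; your proof yields the borderline case only for $p<\infty$, and for $p=\infty$ requires $\alpha>\tfrac d2$. This is the correct reading of the lemma, and the discrepancy is harmless for the paper, which only ever invokes this embedding with finite $p$ (for instance $p=2+\eta$ in the proof of Lemma \ref{logarithmic_lemma}).
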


\medskip

The following lemma is a useful interpolation estimate that we shall use in the following, and which can be found in \cite[Theorem 3.8]{sickel2014}.

\medskip

\begin{lemma} \label{interpolation_estimate_Besov} 
Let $p_0,p_1,q_0,q_1 \in[1,\infty]$ and $\alpha_0,\alpha_1,\mu_0,\mu_1 \in \R$ and $p,q,\alpha,\mu$ such that
\begin{equation*} 
\frac{1}{p}=\frac{1-\theta}{p_0}+\frac{\theta}{p_1},\quad \frac{1}{q}=\frac{1-\theta}{q_0}+\frac{\theta}{q_1},
\end{equation*}
and
\begin{equation*}
\alpha=(1-\theta)\alpha_0+\theta\alpha_1, \quad  \mu=(1-\theta)\mu_0 + \theta \mu_1
\end{equation*}
for $\theta \in[0,1]$. Then there exists $C>0$ such that
\begin{equation*} 
\|u\|_{\mathcal{B}^\alpha_{p,q,\mu}(\R^d)} \leq C \|u\|_{\mathcal{B}^{\alpha_0}_{p_0,q_0,\mu_0}(\R^d)}^{1-\theta} \|u\|_{\mathcal{B}^{\alpha_1}_{p_1,q_1,\mu_1}(\R^d)}^{\theta}.
\end{equation*}
In particular, we have
\begin{equation*}
\|u\|_{\CH_\mu^\alpha}\le C\|u\|_{\CH_{\mu_0}^{\alpha_0}}^{1-\theta}\|u\|_{\CH_{\mu_1}^{\alpha_1}}^\theta.
\end{equation*}
\end{lemma}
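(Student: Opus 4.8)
The plan is to prove the estimate directly from the definition of the weighted Besov norm, which has a two-layer structure: an inner spatial norm $\|\Delta_n u\|_{L^p_\mu}$ applied to each Paley--Littlewood block, and an outer weighted $\ell^q$-sum over the dyadic index $n$. At each layer I would insert one application of H\"older's inequality, with exponents chosen so that each of the four hypotheses on $(p,q,\alpha,\mu)$ is used exactly once: the relation on $p$ secures conjugacy at the spatial layer, the relation on $q$ at the dyadic layer, while the affine relations on $\mu$ and $\alpha$ make the weights and the smoothness factors recombine. I will use the multiplier form $\|f\|_{L^p_\mu}=\|\langle\cdot\rangle^\mu f\|_{L^p}$ of the weighted norm, consistent with the comparability $\|u\|_{\CB^\alpha_{p,q,\mu}}\approx\|\langle\cdot\rangle^\mu u\|_{\CB^\alpha_{p,q,0}}$ recorded above.

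First I would fix $n$ and interpolate the spatial norm. Setting $f=\Delta_n u$ and using $\mu=(1-\theta)\mu_0+\theta\mu_1$, the pointwise factorisation
\[
\langle x\rangle^\mu|f(x)|=\big(\langle x\rangle^{\mu_0}|f(x)|\big)^{1-\theta}\big(\langle x\rangle^{\mu_1}|f(x)|\big)^{\theta}
\]
lets me apply H\"older's inequality to $\int_{\R^d}\big(\langle x\rangle^\mu|f|\big)^p\drm x$ with exponents $s_0=\frac{p_0}{p(1-\theta)}$ and $s_1=\frac{p_1}{p\theta}$, which are conjugate precisely because $\frac1p=\frac{1-\theta}{p_0}+\frac\theta{p_1}$. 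Taking $p$-th roots yields the estimate, uniform in $n$,
\[
\|\Delta_n u\|_{L^p_\mu}\le\|\Delta_n u\|_{L^{p_0}_{\mu_0}}^{1-\theta}\,\|\Delta_n u\|_{L^{p_1}_{\mu_1}}^{\theta}.
\]

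Next I would bring in the smoothness weight: since $\alpha=(1-\theta)\alpha_0+\theta\alpha_1$, I split $2^{\alpha n}=2^{(1-\theta)\alpha_0 n}\,2^{\theta\alpha_1 n}$ to obtain
\[
2^{\alpha n}\|\Delta_n u\|_{L^p_\mu}\le\big(2^{\alpha_0 n}\|\Delta_n u\|_{L^{p_0}_{\mu_0}}\big)^{1-\theta}\big(2^{\alpha_1 n}\|\Delta_n u\|_{L^{p_1}_{\mu_1}}\big)^{\theta},
\]
and then take the $\ell^q(\N)$ norm over $n\ge0$ and apply the discrete H\"older inequality with exponents $\frac{q_0}{q(1-\theta)}$ and $\frac{q_1}{q\theta}$, conjugate thanks to $\frac1q=\frac{1-\theta}{q_0}+\frac\theta{q_1}$. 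This delivers exactly
\[
\|u\|_{\CB^\alpha_{p,q,\mu}}\le\|u\|_{\CB^{\alpha_0}_{p_0,q_0,\mu_0}}^{1-\theta}\,\|u\|_{\CB^{\alpha_1}_{p_1,q_1,\mu_1}}^{\theta},
\]
and the special case $\CH^\alpha_\mu=\CB^\alpha_{2,2,\mu}$ follows by taking $p_i=q_i=2$. I do not expect a genuine analytic obstacle, which is why the result can be quoted from \cite{sickel2014}; the only care needed is combinatorial, namely verifying the two conjugacy relations and confirming that the endpoints are covered. The degenerate values $\theta\in\{0,1\}$ make the inequality an identity, and the cases $p_i,q_i\in\{1,\infty\}$ are handled by the usual conventions $\tfrac1\infty=0$, where the relevant H\"older step degenerates into a supremum bound; the constant $C$ merely absorbs the comparability constants between $\|\cdot\|_{L^p_\mu}$ and the multiplier norm.
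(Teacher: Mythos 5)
Your argument is correct, but it is genuinely different from the paper's: the paper does not prove this lemma at all, it quotes it from the complex interpolation theory of weighted Besov spaces (\cite[Theorem 3.8]{sickel2014}). Your route is the elementary one: since all three norms are built from the same Paley--Littlewood blocks $\Delta_n u$, log-convexity can be checked block by block --- spatial H\"older consuming the hypotheses on $p$ and $\mu$, then discrete H\"older over $n$ consuming those on $q$ and $\alpha$ --- and your exponents are legitimate ($s_0,s_1\ge 1$ indeed follows from the relation on $p$, and the endpoint cases degenerate correctly). This yields the inequality with constant $C=1$ in a fixed convention and is fully self-contained; what the citation buys instead is the stronger identification of $\CB^{\alpha}_{p,q,\mu}$ as the complex interpolation space of $\CB^{\alpha_0}_{p_0,q_0,\mu_0}$ and $\CB^{\alpha_1}_{p_1,q_1,\mu_1}$, which also interpolates operators, but none of that extra strength is used in the paper. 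One caveat you rightly flagged deserves emphasis: the statement is convention-sensitive when $p_0\neq p_1$. With the paper's literal definition $\|f\|_{L^p_\mu}^p=\int_{\R^d}\langle x\rangle^{\mu}|f(x)|^p\,\drm x$ (weight not raised to the power $p$), the pointwise factorisation forces the convexity relation $\mu=\tfrac{p(1-\theta)}{p_0}\mu_0+\tfrac{p\theta}{p_1}\mu_1$ rather than $\mu=(1-\theta)\mu_0+\theta\mu_1$, and the two coincide only for $p_0=p_1=p$; under the multiplier convention $\|f\|_{L^p_\mu}=\|\langle\cdot\rangle^{\mu}f\|_{L^p}$ --- the one consistent with the comparability $\|u\|_{\CB^\alpha_{p,q,\mu}}\approx\|\langle\cdot\rangle^\mu u\|_{\CB^\alpha_{p,q,0}}$ recorded in Section \ref{SectionFunctionalSpaces} and with the cited reference --- your proof gives exactly the stated lemma. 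Since every application in the paper takes $p_0=p_1=q_0=q_1=2$, where the two conventions agree up to a rescaling of $\mu$, nothing downstream is affected by this choice.
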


\medskip

In general, one can only multiply a distribution by a test function. The following lemma gives a product rule in weighted Besov space, as a generalisation of Young condition, see~\cite[Section 2.8.2]{triebel1983}.

\medskip

\begin{lemma}\label{product_estimate_Besov} 
Let $\alpha_1,\alpha_2 \in \R$ such that $\alpha_1+\alpha_2>0$. Let $\mu_1,\mu_2 \in \R$ with $\mu=\mu_1+\mu_2$ and $p_1,p_2\in[1,\infty]$ with $\frac{1}{p}=\frac{1}{p_1}+\frac{1}{p_2}$. Then for any $\kappa >0$, there exists a constant $C>0$ such that
\begin{equation*} 
\|uv\|_{\mathcal{B}_{p,p,\mu}^{\alpha-\kappa}(\R^d)} \leq C \| u \|_{\mathcal{B}_{p_1,p_1,\mu_1}^{\alpha_1}(\R^d)} \| v \|_{\mathcal{B}_{p_2,p_2,\mu_2}^{\alpha_2}(\R^d)}
\end{equation*}
where $\alpha=\min(\alpha_1,\alpha_2)$.
\end{lemma}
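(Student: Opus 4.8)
The plan is to reduce the weighted bound to the classical unweighted multiplication theorem, and the reduction is exactly where the hypothesis $\mu=\mu_1+\mu_2$ is used. The starting point is the weight equivalence recalled above, namely that $\|w\|_{\CB_{p,q,\mu}^\alpha}$ is comparable to $\|\langle\cdot\rangle^\mu w\|_{\CB_{p,q,0}^\alpha}$, together with the pointwise identity $\langle\cdot\rangle^\mu(uv)=(\langle\cdot\rangle^{\mu_1}u)(\langle\cdot\rangle^{\mu_2}v)$, which holds because $\langle\cdot\rangle^{\mu_1}\langle\cdot\rangle^{\mu_2}=\langle\cdot\rangle^{\mu_1+\mu_2}=\langle\cdot\rangle^\mu$. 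Writing $\tilde u=\langle\cdot\rangle^{\mu_1}u$ and $\tilde v=\langle\cdot\rangle^{\mu_2}v$, these two facts turn the claimed inequality into the unweighted statement
\begin{equation*}
\|\tilde u\,\tilde v\|_{\CB_{p,p,0}^{\alpha-\kappa}}\le C\,\|\tilde u\|_{\CB_{p_1,p_1,0}^{\alpha_1}}\,\|\tilde v\|_{\CB_{p_2,p_2,0}^{\alpha_2}},
\end{equation*}
so from now on I may drop the weights and prove this. This is the multiplication theorem of \cite[Section 2.8.2]{triebel1983}, which I reproduce in order to locate the roles of the condition $\alpha_1+\alpha_2>0$ and of the arbitrarily small loss $\kappa$.

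The tool is Bony's paraproduct decomposition $\tilde u\,\tilde v=T_{\tilde u}\tilde v+T_{\tilde v}\tilde u+R(\tilde u,\tilde v)$, where $T_{\tilde u}\tilde v=\sum_j S_{j-1}\tilde u\,\Delta_j\tilde v$ collects the low-high interactions, $T_{\tilde v}\tilde u$ the high-low interactions, and $R(\tilde u,\tilde v)=\sum_{|i-j|\le1}\Delta_i\tilde u\,\Delta_j\tilde v$ the resonant high-high interactions, with $S_{j-1}=\sum_{k\le j-2}\Delta_k$. Each dyadic block of $T_{\tilde u}\tilde v$ is spectrally localised in an annulus of size $2^j$, and each block of $R(\tilde u,\tilde v)$ in a ball of size $2^i$, so I would estimate the three pieces separately and recombine them using the $\ell^p$ structure of the Besov norm.

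For the paraproducts I apply Hölder's inequality $\|fg\|_{L^p}\le\|f\|_{L^{p_1}}\|g\|_{L^{p_2}}$ block by block, which uses exactly $\frac1p=\frac1{p_1}+\frac1{p_2}$ and explains why no integrability is lost. Bounding $\|S_{j-1}\tilde u\|_{L^{p_1}}$ by the partial sum $\sum_{k\le j-2}\|\Delta_k\tilde u\|_{L^{p_1}}$, the geometric factor $2^{\alpha_1 k}$ makes this sum uniformly bounded in $j$ when $\alpha_1>0$ (giving regularity $\alpha_2$ for $T_{\tilde u}\tilde v$) and of order $2^{-\alpha_1 j}$ when $\alpha_1<0$ (giving regularity $\alpha_1+\alpha_2$); in the borderline case $\alpha_1=0$ the sum diverges only logarithmically in $j$, which I absorb into a factor $2^{\kappa j}$ for any $\kappa>0$. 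This is precisely where the loss $\kappa$ is unavoidable, and in every case the resulting regularity is at least $\min(\alpha_1,\alpha_2)-\kappa$; the symmetric piece $T_{\tilde v}\tilde u$ is treated identically with the roles of $\alpha_1$ and $\alpha_2$ exchanged. For the resonant term, spectral support forces $\Delta_n R(\tilde u,\tilde v)$ to involve only blocks with $i\ge n-C$, so after Hölder in space and in the dyadic index the summation over such $i$ of the factors $2^{(\alpha_1+\alpha_2)(n-i)}$ converges exactly because $\alpha_1+\alpha_2>0$, yielding $R(\tilde u,\tilde v)\in\CB_{p,p,0}^{\alpha_1+\alpha_2}$, which is better than needed since $\alpha_1+\alpha_2>\min(\alpha_1,\alpha_2)$. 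Collecting the three contributions gives the unweighted bound, and with the reduction above the lemma follows. The only genuinely delicate points are the convergence of the resonant sum, which is the reason for assuming $\alpha_1+\alpha_2>0$, and the borderline regularity of the low-frequency factor in the paraproduct, which is the source of the small loss $\kappa$; everything else is bookkeeping of the mixed exponents.
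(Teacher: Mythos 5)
Your proof is correct in outline, but note that the paper itself does not prove this lemma at all: it is stated with a citation to Triebel \cite[Section 2.8.2]{triebel1983}, so there is no internal argument to compare against, and what you supply is the standard self-contained proof. Your reduction step is exactly the mechanism that makes the weighted statement a corollary of the unweighted multiplication theorem: the norm equivalence $\|w\|_{\CB^\alpha_{p,q,\mu}}\sim\|\langle\cdot\rangle^\mu w\|_{\CB^\alpha_{p,q,0}}$ recorded in the paper, combined with the identity $\langle\cdot\rangle^{\mu_1}\langle\cdot\rangle^{\mu_2}=\langle\cdot\rangle^{\mu_1+\mu_2}$, transfers the weight onto the two factors, and this is precisely where the hypothesis $\mu=\mu_1+\mu_2$ is consumed. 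Your Bony decomposition of the unweighted product is the classical argument and correctly locates the role of $\alpha_1+\alpha_2>0$ (geometric convergence of the resonant sum over $i\ge n-C$) and of the Hölder condition $\frac1p=\frac1{p_1}+\frac1{p_2}$. One small imprecision: the loss $\kappa$ is not needed only in the borderline case $\alpha_1=0$. The paraproduct $T_{\tilde u}\tilde v$ naturally lands in $\CB^{\alpha_2}_{p,p_2,0}$ (its third index is inherited from $\tilde v$), and symmetrically $T_{\tilde v}\tilde u$ lands in $\CB^{\alpha_1}_{p,p_1,0}$, whereas the target space is $\CB^{\alpha-\kappa}_{p,p,0}$; since in general $p<p_1,p_2$, passing to the smaller summability index $\ell^p$ also costs an arbitrarily small amount of regularity, via the embedding $\CB^{s}_{p,q,0}\subset\CB^{s-\kappa}_{p,p,0}$ valid for every $q$. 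This is absorbed by the same factor $2^{-\kappa n}$ you already invoke, so it is bookkeeping rather than a gap, but a careful write-up should state it, as it shows the loss $\kappa$ is generically present and not tied to a single degenerate case.
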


\medskip

As we will control energy associated to dispersive PDEs, the following duality result from in weighted Besov spaces from \cite[Theorem 2.11.2]{triebel1983} will be useful.

\medskip

\begin{lemma}\label{duality_estimate_Besov} 
Let $\alpha,\mu \in \R$ and $p,q \in [1,\infty ]$. Then there exists a constant $C>0$ such that
\begin{equation*} 
\Big| \int_{\R^d} u(x)v(x)\dd x \Big| \leq C \|u\|_{\mathcal{B}_{p,q,\mu}^\alpha(\R^d)} \|v\|_{\mathcal{B}_{p',q',-\mu}^{-\alpha}(\R^d)},
\end{equation*}
where $\frac{1}{p}+\frac{1}{p'}=1$ and  $\frac{1}{q}+\frac{1}{q'}=1$ for any $u,v\in\CS(\IR^d)$.
\end{lemma}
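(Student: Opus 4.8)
The plan is to reduce the estimate to the unweighted case $\mu=0$ and then exploit the almost-orthogonality of the Paley--Littlewood blocks together with Hölder's inequality, applied both in space and over the dyadic scales. Throughout, the assumption $u,v\in\CS(\R^d)$ guarantees that Parseval's identity and all interchanges of summation and integration are legitimate.

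First I would remove the weight. Writing $\int_{\R^d}uv=\int_{\R^d}(\langle\cdot\rangle^\mu u)(\langle\cdot\rangle^{-\mu}v)$ and invoking the norm equivalence $C_1\|\langle\cdot\rangle^\mu u\|_{\CB_{p,q,0}^\alpha}\le\|u\|_{\CB_{p,q,\mu}^\alpha}\le C_2\|\langle\cdot\rangle^\mu u\|_{\CB_{p,q,0}^\alpha}$ recalled above (applied with weight $\mu$ for $u$ and weight $-\mu$ for $v$), it suffices to prove the statement for $\tilde u=\langle\cdot\rangle^\mu u$ and $\tilde v=\langle\cdot\rangle^{-\mu}v$, that is, in the case $\mu=0$.

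Next I would decompose $\tilde u=\sum_{n\ge0}\Delta_n\tilde u$ and use the support conditions on $\widehat\chi$ and $\widehat\chi_0$. By Parseval, the pairing $\int_{\R^d}\Delta_n\tilde u\,\Delta_m\tilde v$ couples $\widehat{\Delta_n\tilde u}$, supported in the annulus $\{|\xi|\sim 2^n\}$, with $\widehat{\Delta_m\tilde v}$, supported in $\{|\xi|\sim 2^m\}$; these overlap only when $|n-m|\le 1$. Hence, setting $\widetilde\Delta_n=\Delta_{n-1}+\Delta_n+\Delta_{n+1}$,
\[
\int_{\R^d}\tilde u\,\tilde v=\sum_{n\ge0}\int_{\R^d}\Delta_n\tilde u\,\widetilde\Delta_n\tilde v.
\]
For each scale, Hölder's inequality in space gives $\big|\int_{\R^d}\Delta_n\tilde u\,\widetilde\Delta_n\tilde v\big|\le\|\Delta_n\tilde u\|_{L^p}\|\widetilde\Delta_n\tilde v\|_{L^{p'}}$, with $\|\widetilde\Delta_n\tilde v\|_{L^{p'}}\lesssim\sum_{|k|\le1}\|\Delta_{n+k}\tilde v\|_{L^{p'}}$. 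Inserting the factors $2^{\alpha n}$ and $2^{-\alpha n}$, bounding $2^{\alpha(n+k)-\alpha n}\le 2^{|\alpha|}$ for $|k|\le1$, and then applying the discrete Hölder inequality with exponents $q,q'$, I would conclude
\[
\Big|\int_{\R^d}\tilde u\,\tilde v\Big|\lesssim\Big(\sum_{n\ge0} 2^{\alpha nq}\|\Delta_n\tilde u\|_{L^p}^q\Big)^{1/q}\Big(\sum_{n\ge0} 2^{-\alpha nq'}\|\Delta_n\tilde v\|_{L^{p'}}^{q'}\Big)^{1/q'}=\|\tilde u\|_{\CB_{p,q,0}^\alpha}\|\tilde v\|_{\CB_{p',q',0}^{-\alpha}},
\]
the endpoints $q=\infty$ (resp. $q'=\infty$) being handled by replacing the corresponding sum with a supremum.

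The main obstacle is the almost-orthogonality step: one must check carefully that the spectral localisation of the blocks, together with Parseval, genuinely forces $\int_{\R^d}\Delta_n\tilde u\,\Delta_m\tilde v$ to vanish once $|n-m|\ge2$, including the interaction involving the low-frequency block $\Delta_0$ whose spectrum is a ball rather than an annulus. The remaining ingredients—the reduction to $\mu=0$, Hölder in space, and the discrete Hölder over scales—are routine, requiring only minor bookkeeping at the endpoints $p,q\in\{1,\infty\}$ where the conjugate exponents degenerate.
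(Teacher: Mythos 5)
Your proof is correct, but it takes a genuinely different route from the paper: the paper offers no proof at all for this lemma, importing it directly from Triebel's duality theorem (Theorem 2.11.2 of the cited book) for Besov spaces, combined implicitly with the weight equivalence $C_1\|\langle\cdot\rangle^\mu u\|_{\CB_{p,q,0}^\alpha}\le\|u\|_{\CB_{p,q,\mu}^\alpha}\le C_2\|\langle\cdot\rangle^\mu u\|_{\CB_{p,q,0}^\alpha}$ stated just before Lemma \ref{besov_embeddings}. Your argument is the standard self-contained one: reduce to $\mu=0$ by that same equivalence (legitimate, since $\langle\cdot\rangle^{\pm\mu}$ is smooth with polynomially bounded derivatives, so $\tilde u=\langle\cdot\rangle^{\mu}u$ and $\tilde v=\langle\cdot\rangle^{-\mu}v$ remain Schwartz), then Littlewood--Paley almost orthogonality plus H\"older in $x$ and in $n$. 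What this buys is a proof from first principles of exactly the inequality needed, whereas the citation invokes a stronger statement (a representation of the dual space) but spares all bookkeeping. Concerning the step you single out as the main obstacle, it does go through with the paper's conventions: for $n\ge1$ the support of $\widehat{\Delta_n\tilde u}$ lies in $\{2^{n-2}\le|\xi|\le2^n\}$, so for $|n-m|\ge3$ the two annuli are disjoint, while for $|n-m|=2$ they meet only on a sphere; since Fourier transforms of Schwartz functions are continuous, a continuous function supported in a set of measure zero vanishes identically, so the pairing $\int\widehat{\Delta_n\tilde u}(\xi)\,\widehat{\Delta_m\tilde v}(-\xi)\,\drm\xi$ is still zero. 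The same reasoning settles the low-frequency block: the ball $\{|\xi|\le1\}$ meets $\{2^{m-2}\le|\xi|\le2^m\}$ only for $m\le2$, and for $m=2$ only on the unit sphere, so $\Delta_0$ genuinely interacts only with $\Delta_0$ and $\Delta_1$. Hence interactions are confined to $|n-m|\le1$ as you claim, and the rest of your argument (H\"older with exponents $p,p'$, then $q,q'$ with the supremum convention at the endpoint, the shift $k$ absorbed by the harmless factor $2^{|\alpha|}$) closes the proof.
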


\medskip

In order to solve the logarithmic Schrödinger equation with an unbounded potential, we will also need the following inequality in weighted Lebesgue spaces.

\medskip

\begin{lemma} \label{logarithmic_lemma}
Let $m \in \N^*$, $\eta>0$ and $ \mu,\mu_0 \in \R$ such that $0 \leq \mu <\mu_0$.
If $\eta<\frac{2(\mu_0-\mu)}{\frac{d}{2}+\mu_0}$, then there exists a constant $C>0$ such that
\begin{equation*}
\int_{\R^d} \langle x \rangle^{2\mu} |u(x)|^2 \left| \log |u(x)|^2 \right|^m \dd x \le C \|u\|_{L^2_{\mu_0}}^{\frac{d\eta}{2\mu_0}+\frac{2\mu}{\mu_0}} \|u\|_{L^2}^{2-\eta-\frac{d\eta}{2\mu_0}-\frac{2\mu}{\mu_0}} + \|u\|^{2+\eta}_{L^{2+\eta}_{\frac{2\mu}{2+\eta}}}.
\end{equation*}
Moreover, there exists a constant $C'>0$ such that
\begin{equation*}
\|u\|^{2+\eta}_{L^{2+\eta}_{\frac{2\mu}{2+\eta}}} \le C'\min\Big( \| u \|_{H^1_{\frac{4\mu}{d\eta}}}^{\frac{d\eta}{2}} \| u \|_{L^2}^{2+\eta-\frac{d\eta}{2}}, \| u \|_{H^1}^{\frac{d\eta}{2}} \| u \|_{L^2_{\frac{2\mu}{2+\eta-\frac{d \eta}{2}}}}^{2+\eta-\frac{d\eta}{2}} \Big).
\end{equation*}
In particular, this gives
\begin{equation*}
\int_{\R^d}|u(x)|^2 \left| \log |u(x)|^2 \right|^m \dd x \le C \|u\|_{L^2_{\mu_0}}^{\frac{d\eta}{2\mu_0}} \|u\|_{L^2}^{2-\eta-\frac{d\eta}{2\mu_0}} +C \| u \|_{\CH^1}^{\frac{d\eta}{2}} \| u \|_{L^2}^{2+\eta-\frac{d\eta}{2}}.
\end{equation*}
\end{lemma}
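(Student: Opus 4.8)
The strategy is to split $\R^d$ according to the size of $|u|$, to bound the logarithm by suitable powers of $|u|$, and to recover the weighted norms from the embedding and interpolation results above.

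I would first write $\R^d=A\cup B$ with $A=\{|u|\le1\}$ and $B=\{|u|>1\}$. For every $\eta>0$ the maps $t\mapsto t^{\eta}|\log t^2|^m$ and $t\mapsto t^{-\eta}|\log t^2|^m$ are bounded on $[1,\infty)$ and $(0,1]$ respectively, so that $|\log|u|^2|^m\le C_\eta|u|^{\eta}$ on $B$ and $|\log|u|^2|^m\le C_\eta|u|^{-\eta}$ on $A$. On $B$ this gives directly $\int_B\langle x\rangle^{2\mu}|u|^2|\log|u|^2|^m\,\dd x\le C\int_{\R^d}\langle x\rangle^{2\mu}|u|^{2+\eta}\,\dd x=C\|u\|_{L^{2+\eta}_{2\mu/(2+\eta)}}^{2+\eta}$, which is the second term of the first inequality. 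On $A$ there remains to estimate $\int_A\langle x\rangle^{2\mu}|u|^{2-\eta}\,\dd x$.

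The region $A$ is the delicate point. I would absorb the missing power $\eta$ of $|u|$ into a decaying weight by writing the integrand as $\big(\langle x\rangle^{2\nu}|u|^2\big)^{(2-\eta)/2}\langle x\rangle^{2\mu-\nu(2-\eta)}$ and applying Hölder with exponents $\tfrac{2}{2-\eta}$ and $\tfrac{2}{\eta}$, which yields $\int_A\langle x\rangle^{2\mu}|u|^{2-\eta}\,\dd x\le C\|u\|_{L^2_\nu}^{2-\eta}$ as soon as the leftover weight $\langle x\rangle^{2(2\mu-\nu(2-\eta))/\eta}$ is integrable, that is $\nu>\frac{2\mu+d\eta/2}{2-\eta}$. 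This is exactly where the hypothesis is used: $\eta<\frac{2(\mu_0-\mu)}{d/2+\mu_0}$ is precisely the condition ensuring $\frac{2\mu+d\eta/2}{2-\eta}<\mu_0$, so that $\nu$ can be taken in the nonempty interval $\big(\frac{2\mu+d\eta/2}{2-\eta},\mu_0\big)$. Inserting such a $\nu$ into Lemma~\ref{interpolation_estimate_Besov} (with $p_i=q_i=2$, $\alpha_0=\alpha_1=0$ and weights $\mu_0$ and $0$), which gives $\|u\|_{L^2_\nu}\le C\|u\|_{L^2_{\mu_0}}^{1-\theta}\|u\|_{L^2}^{\theta}$ for $\nu=(1-\theta)\mu_0$, and choosing $\nu$ at the endpoint $\frac{2\mu+d\eta/2}{2-\eta}$ produces the exponents $\frac{d\eta}{2\mu_0}+\frac{2\mu}{\mu_0}$ and $2-\eta-\frac{d\eta}{2\mu_0}-\frac{2\mu}{\mu_0}$, hence the first term of the first inequality. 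The borderline character of this endpoint is the main obstacle, and it is the strict inequality in the hypothesis that leaves room for the argument.

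For the second inequality I would combine the weighted Sobolev embedding of Lemma~\ref{besov_embeddings} with the interpolation of Lemma~\ref{interpolation_estimate_Besov}. Setting $s=\frac d2-\frac{d}{2+\eta}=\frac{d\eta}{2(2+\eta)}$ and $\sigma=\frac{2\mu}{2+\eta}$, the embedding gives $\|u\|_{L^{2+\eta}_{\sigma}}\le C\|u\|_{\CH^s_\sigma}$, and interpolating $\CH^s_\sigma$ with $\theta=1-s$ between $\CH^1$ and $L^2$ yields two bounds according to whether the whole weight $\sigma$ is carried by the $\CH^1$ factor, with weight $\sigma/s=\frac{4\mu}{d\eta}$, or by the $L^2$ factor, with weight $\sigma/(1-s)=\frac{2\mu}{2+\eta-d\eta/2}$. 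Raising to the power $2+\eta$ and using $s(2+\eta)=\frac{d\eta}{2}$ gives the two members of the minimum, and the stated bound is their infimum. Finally, the last inequality is the case $\mu=0$: the first inequality reduces to $\int_{\R^d}|u|^2|\log|u|^2|^m\,\dd x\le C\|u\|_{L^2_{\mu_0}}^{d\eta/(2\mu_0)}\|u\|_{L^2}^{2-\eta-d\eta/(2\mu_0)}+\|u\|_{L^{2+\eta}}^{2+\eta}$, while the second inequality with $\mu=0$ (for which the two members of the minimum both collapse to the unweighted one, since $\CH^1=\CH^1_0$) bounds $\|u\|_{L^{2+\eta}}^{2+\eta}\le C\|u\|_{\CH^1}^{d\eta/2}\|u\|_{L^2}^{2+\eta-d\eta/2}$; combining these two estimates gives the claim.
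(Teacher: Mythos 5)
Your treatment of the $|u|^{2+\eta}$ term and of the final ``$\mu=0$'' combination is essentially the paper's own argument (Besov embedding into $\CH^{d\eta/(2(2+\eta))}_{2\mu/(2+\eta)}$ followed by the interpolation lemma; note the interpolation parameter there is $\theta=s$, not $1-s$ as you wrote, but that is only a labeling slip). The genuine gap is in the $|u|^{2-\eta}$ term. Your Hölder step requires the companion integral $\int_A\langle x\rangle^{2(2\mu-\nu(2-\eta))/\eta}\,\dd x$ to be finite, i.e. $\nu>\nu^*:=\frac{2\mu+d\eta/2}{2-\eta}$ \emph{strictly}; at the endpoint $\nu=\nu^*$ this integral is $\int_A\langle x\rangle^{-d}\,\dd x$, which diverges since $A$ may be all of $\R^d$. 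So ``choosing $\nu$ at the endpoint'' is not a legitimate move, and no limiting argument saves it because the Hölder constant blows up as $\nu\downarrow\nu^*$. For any admissible $\nu>\nu^*$, interpolation gives the exponents $\frac{\nu(2-\eta)}{\mu_0}$ on $\|u\|_{L^2_{\mu_0}}$ and $(2-\eta)\bigl(1-\frac{\nu}{\mu_0}\bigr)$ on $\|u\|_{L^2}$; since $\|u\|_{L^2_{\mu_0}}\ge\|u\|_{L^2}$, shifting exponent mass onto the larger norm yields a \emph{strictly weaker} inequality than the stated one, and it cannot imply it because the ratio $\|u\|_{L^2_{\mu_0}}/\|u\|_{L^2}$ is unbounded. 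The hypothesis $\eta<\frac{2(\mu_0-\mu)}{d/2+\mu_0}$ only guarantees $\nu^*<\mu_0$, i.e. that admissible $\nu$ exist; it does not make the endpoint itself admissible.

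The missing idea, which is what the paper does, is a splitting in space with a free parameter rather than a single global Hölder with a fixed intermediate weight: write $\int_{\R^d}\langle x\rangle^{2\mu}|u|^{2-\eta}\,\dd x$ as the sum over $\{|x|\le R\}$ and $\{|x|>R\}$, apply Hölder with exponents $\frac{2}{2-\eta},\frac{2}{\eta}$ on each piece. On the ball no decay of the weight is needed and one gets $CR^{2\mu+d\eta/2}\|u\|_{L^2}^{2-\eta}$; outside the ball one borrows the full weight $\langle x\rangle^{2\mu_0}$ from $u$, getting $CR^{2\mu-(2-\eta)\mu_0+d\eta/2}\|u\|_{L^2_{\mu_0}}^{2-\eta}$, where the hypothesis on $\eta$ makes the exponent of $R$ negative. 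Optimizing over $R$, namely taking $R=(\|u\|_{L^2_{\mu_0}}/\|u\|_{L^2})^{1/\mu_0}$, is exactly what attains the endpoint exponents $\frac{2\mu}{\mu_0}+\frac{d\eta}{2\mu_0}$ and $2-\eta-\frac{2\mu}{\mu_0}-\frac{d\eta}{2\mu_0}$ that your fixed-weight argument cannot reach. (Your weaker bound, with $\nu$ close to $\nu^*$, would in fact suffice for the later applications in the paper, since all downstream constraints on exponents are open conditions; but it does not prove the lemma as stated.)
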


\begin{proof}
For any $m\in\IN^*$ and $\eta>0$, there exists a constant $C>0$ such that
\begin{equation*}
|\log y|^m\le Cy^\eta+Cy^{-\eta}
\end{equation*}
for $y>0$ thus
\begin{equation*}
\int_{\R^d}\langle x \rangle^{2\mu} |u(x)|^2 \left| \log |u(x)|^2 \right|^m \dd x \le C\int_{\R^d}\langle x \rangle^{2\mu} |u(x)|^{2-\eta} \dd x + C\int_{\R^d}\langle x \rangle^{2\mu} |u(x)|^{2+\eta} \dd x.
\end{equation*}
For the first term, let $R>0$ and write
\begin{equation*}
\int_{\R^d} \langle x \rangle^{2\mu} |u(x)|^{2-\eta} \dd x = \int_{|x| \leq R} \langle x \rangle^{2\mu} |u(x)|^{2-\eta} \dd x + \int_{|x|>R} \langle x \rangle^{2\mu} |u(x)|^{2-\eta} \dd x
\end{equation*}
following for example \cite[Lemma 6.2]{carles2004}. For $p=\frac{2}{2-\eta}\in(1,\infty)$ and $p'=\frac{2}{\eta}$ such that $\frac{1}{p}+\frac{1}{p'}=1$, Hölder inequality gives
\begin{align*}
\int_{\IR^d}\langle x\rangle^{2\mu}|u(x)|^{2-\eta}\drm x&\le\Big(\int_{|x|\le R}\langle x\rangle^{2\mu p'}\drm x\Big)^{\frac{1}{p'}}\Big(\int_{|x|\le R}|u(x)|^{(2-\eta)p}\drm x\Big)^{\frac{1}{p}}\\
&\quad+\Big(\int_{|x|>R}\langle x\rangle^{(2\mu-\alpha)p'}\drm x\Big)^{\frac{1}{p'}}\Big(\int_{|x|>R}\langle x\rangle^{\alpha p}|u(x)|^{(2-\eta)p}\drm x\Big)^{\frac{1}{p}}\\
&\le\Big(\int_{|x|\le R}\langle x\rangle^{2\mu p'}\drm x\Big)^{\frac{\eta}{2}}\Big(\int_{|x|\le R}|u(x)|^2\drm x\Big)^{\frac{2-\eta}{2}}\\
&\quad+\Big(\int_{|x|>R}\langle x\rangle^{(2\mu-\alpha)p'}\drm x\Big)^{\frac{\eta}{2}}\Big(\int_{|x|>R}\langle x\rangle^{\alpha p}|u(x)|^2\drm x\Big)^{\frac{2-\eta}{2}}
\end{align*}
with $\alpha>0$ such that $(2\mu-\alpha)p'<-d$. This gives the condition
\begin{equation*}
\alpha>\frac{\eta d}{2}+2\mu
\end{equation*}
hence we can take $\alpha=\frac{2\mu_0}{p}=(2-\eta)\mu_0$ for $\eta$ small enough to get
\begin{align*}
\int_{\IR^d}\langle x\rangle^{2\mu}|u(x)|^{2-\eta}\drm x&\le C'R^{2\mu+\frac{d\eta}{2}}\Big(\int_{\IR^d}|u(x)|^2\drm x\Big)^{\frac{2-\eta}{2}}\\
&\quad	+C'R^{2\mu-(2-\eta)\mu_0+\frac{d\eta}{2}}\Big(\int_{\IR^d}\langle x\rangle^{2\mu_0}|u(x)|^2\drm x\Big)^{\frac{2-\eta}{2}}
\end{align*}
for a constant $C'=C'(\eta,\mu,d)>0$. Optimizing this bound in $R>0$ gives
\begin{equation*}
R=\left(\frac{\|u\|_{L_{\mu_0}^2}}{\|u\|_{L^2}}\right)^{\frac{1}{\mu_0}}
\end{equation*}
and we get
\begin{equation*}
\int_{\IR^d}\langle x\rangle^{2\mu}|u(x)|^{2-\eta}\drm x\le2C'\|u\|_{L_{\mu_0}^2}^{\frac{2\mu}{\mu_0}+\frac{d\eta}{2\mu_0}}\|u\|_{L^2}^{2-\eta-\frac{2\mu}{\mu_0}-\frac{d\eta}{2\mu_0}},
\end{equation*}
which concludes that
\begin{equation*}
\int_{\R^d}\langle x \rangle^{2\mu} |u(x)|^2 \left| \log |u(x)|^2 \right|^m \dd x \le 2CC'\|u\|_{L_{\mu_0}^2}^{\frac{2\mu}{\mu_0}+\frac{d\eta}{2\mu_0}}\|u\|_{L^2}^{2-\eta-\frac{2\mu}{\mu_0}-\frac{d\eta}{2\mu_0}}+ C\|u\|_{L_{\frac{2\mu}{2+\eta}}^{2+\eta}}^{2+\eta}.
\end{equation*}
The condition $\eta$ small enough is explicit and given by $(2-\eta)\mu_0>\frac{\eta d}{2}+2\mu$, that is
\begin{equation*}
\eta<\frac{2(\mu_0-\mu)}{\frac{d}{2}+\mu_0}.
\end{equation*}
Lemma \ref{besov_embeddings} gives
\begin{equation*}
\|u\|_{L_{\frac{2\mu}{2+\eta}}^{2+\eta}}\le C\|u\|_{\CH_{\frac{2\mu}{2+\eta}}^{\frac{d}{2}-\frac{d}{2+\eta}}}
\end{equation*}
with $C=C(\eta,\mu)>0$ and Lemma \ref{interpolation_estimate_Besov} gives
\begin{equation*}
\|u\|_{\CH_{\frac{2\mu}{2+\eta}}^{\frac{\eta d}{2(2+\eta)}}}\le C'\|u\|_{L^2}^{1-\theta}\|u\|_{\CH_{\mu_\theta}^{\alpha_\theta}}^\theta
\end{equation*}
for $\theta\in[0,1]$ with $C'=C'(\eta,\theta,\mu)>0$ and
\begin{equation*}
\alpha_\theta=\frac{\eta d}{2\theta(2+\eta)}\quad\text{and}\quad\mu_\theta=\frac{2\mu}{\theta(2+\eta)}.
\end{equation*}
Hence taking $\theta=\frac{d\eta}{2(2+\eta)}$ yields
\begin{equation*}
\|u\|_{L_{\frac{2\mu}{2+\eta}}^{2+\eta}}^{2+\eta}\le C''\|u\|_{L^2}^{2+\eta-\frac{d\eta}{2}}\|u\|_{\CH_{\frac{4\mu}{d\eta}}^1}^{\frac{d\eta}{2}}.
\end{equation*}
Following the same path, one also gets
\begin{equation*}
\|u\|^{2+\eta}_{L^{2+\eta}_{\frac{2\mu}{2+\eta}}} \le C''\| u \|_{\CH^1}^{\frac{d\eta}{2}} \| u \|_{L^2_{\frac{2\mu}{2+\eta-\frac{d \eta}{2}}}}^{2+\eta-\frac{d\eta}{2}}
\end{equation*}
which completes the proof.
\end{proof}

\section{Stochastic bounds and renormalization} \label{SectionStoc}

In this section, we give the bounds on random fields needed to solve \eqref{SlogNLS} on $\R$ and~$\R^2$. In particular, we perform the renormalization probabilistic step with the definition of the Wick square $\Wick{|\nabla X|^2}$ on $\R^2$. The noise $\xi$ is the Gaussian random distribution such that $\IE\big[\langle\xi,\varphi\rangle\big]=0$ and
\begin{equation*}
\IE\big[\langle\xi,\varphi\rangle\langle\xi,\psi\rangle\big]=\langle\varphi,\psi\rangle_{L^2(\IR^d)}
\end{equation*}
for any $\varphi,\psi\in\CS(\IR^d)$. The covariance can also be written formally as
\begin{equation*}
\IE\big[\xi(x)\xi(y)\big]=\delta_0(x-y).
\end{equation*}
The following proposition gives the regularity of the noise. We give its proof to give a flavor of the arguments but refer to \cite{debussche2019} for the proofs of all the finer stochastic bounds needed throughout the analysis.

\medskip

\begin{proposition} \label{estimate_white_noise}
For any $\alpha<-\frac{d}{2}$ and $\mu>0$, we have
\begin{equation*}
\xi\in\CC_{-\mu}^{\alpha}(\R^d)
\end{equation*}
almost surely.
\end{proposition}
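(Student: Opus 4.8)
The plan is to prove the membership by a moment bound in a weighted Besov space of the form $\CB^{\alpha'}_{p,p,-\mu}$ with $p$ large, and then to upgrade to the Hölder space $\CC^\alpha_{-\mu}$ through the Besov embedding of Lemma \ref{besov_embeddings}. Concretely, for $p\in[1,\infty)$ I set $\alpha'=\alpha+\frac{d}{p}$; since $\alpha<-\frac{d}{2}$ one still has $\alpha'<-\frac{d}{2}$ as soon as $p$ is large, and Lemma \ref{besov_embeddings} gives $\CB^{\alpha'}_{p,p,-\mu}\subset\CB^{\alpha'-d/p}_{\infty,\infty,-\mu}=\CC^{\alpha}_{-\mu}$. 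It therefore suffices to show that $\IE\big[\|\xi\|^p_{\CB^{\alpha'}_{p,p,-\mu}}\big]<\infty$ for one such $p$, which by definition of the norm and Tonelli's theorem amounts to controlling
\begin{equation*}
\IE\big[\|\xi\|^p_{\CB^{\alpha'}_{p,p,-\mu}}\big]=\sum_{n\ge0}2^{\alpha' np}\int_{\R^d}\langle x\rangle^{-\mu p}\,\IE\big[|\Delta_n\xi(x)|^p\big]\,\drm x.
\end{equation*}

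The heart of the matter is the single Paley--Littlewood block. Because $\xi$ is a centered Gaussian field, $\Delta_n\xi(x)$ is a centered real Gaussian variable, so by the equivalence of Gaussian moments $\IE\big[|\Delta_n\xi(x)|^p\big]=C_p\,\big(\IE\big[|\Delta_n\xi(x)|^2\big]\big)^{p/2}$ for a constant $C_p$ depending only on $p$. The variance is then computed from the covariance $\IE[\xi(x)\xi(y)]=\delta_0(x-y)$: writing $\Delta_n\xi(x)=\int_{\R^d}\chi_n(x-y)\,\xi(y)\,\drm y$ with $\chi_n(z)=2^{d(n-1)}\chi(2^{n-1}z)$ for $n\ge1$, one gets
\begin{equation*}
\IE\big[|\Delta_n\xi(x)|^2\big]=\|\chi_n(x-\cdot)\|_{L^2}^2=\|\chi_n\|_{L^2}^2=2^{d(n-1)}\|\chi\|_{L^2}^2\le C2^{dn},
\end{equation*}
uniformly in $x$ by translation invariance of the covariance, and the case $n=0$ reduces to a constant. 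Hence $\IE\big[|\Delta_n\xi(x)|^p\big]\le C_p\,2^{dnp/2}$ uniformly in $x$, and using that $\int_{\R^d}\langle x\rangle^{-\mu p}\,\drm x<\infty$ as soon as $p>d/\mu$ we obtain
\begin{equation*}
\IE\big[\|\xi\|^p_{\CB^{\alpha'}_{p,p,-\mu}}\big]\le C\sum_{n\ge0}2^{(\alpha'+\frac{d}{2})np},
\end{equation*}
a convergent geometric series since $\alpha'<-\frac{d}{2}$. Choosing $p$ large enough that both $\alpha+\frac{d}{p}<-\frac{d}{2}$ and $p>d/\mu$ makes this finite, whence $\xi\in\CB^{\alpha'}_{p,p,-\mu}\subset\CC^{\alpha}_{-\mu}$ almost surely.

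The one point requiring care is the interplay between the weight and a small parameter $\mu$: a polynomial weight $\langle x\rangle^{-\mu}$ with $\mu\le d$ is not integrable on its own, so a crude second-moment estimate would only reach $\mu>d$. It is precisely the freedom to take the Lebesgue exponent $p$ as large as desired — at the cost of only a vanishing regularity loss $d/p$, which is harmless since we merely require $\alpha<-\frac{d}{2}$ — that renders $\langle x\rangle^{-\mu p}$ integrable for every $\mu>0$. I expect the remaining content to be purely computational (the exact scaling $\|\chi_n\|_{L^2}^2\sim 2^{dn}$, the uniformity in $x$ from stationarity, and the value of the Gaussian moment constant $C_p$), while the conceptual steps are the reduction to a second moment via Gaussian hypercontractivity together with the embedding of Lemma \ref{besov_embeddings}.
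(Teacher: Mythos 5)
Your proposal is correct and follows essentially the same route as the paper: a $p$-th moment bound on the weighted Besov norm $\CB^{\alpha'}_{p,p,-\mu}$ for large $p$, obtained by reducing each Paley--Littlewood block to its Gaussian second moment (the paper invokes hypercontractivity where you use the exact Gaussian moment equality, which is equivalent here), the scaling $\|\chi_n\|_{L^2}^2 = 2^{d(n-1)}\|\chi\|_{L^2}^2$, integrability of the weight for $p\mu>d$, and the embedding $\CB^{\alpha'}_{p,p,-\mu}\subset\CC^{\alpha'-d/p}_{\infty,\infty,-\mu}$ from Lemma \ref{besov_embeddings}. Your write-up is in fact slightly more careful in the final bookkeeping (the explicit choice $\alpha'=\alpha+\frac{d}{p}$ and the two conditions on $p$), but there is no substantive difference.
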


\medskip

\begin{proof}
Since the noise is Gaussian, we have
\begin{equation*}
\IE\big[\langle \xi,\varphi\rangle^p\big]\le(p-1)^{\frac{p}{2}}\IE\big[\langle\xi,\varphi\rangle^2\big]^{\frac{p}{2}}
\end{equation*}
for any test function $\varphi$, this is usually refered to as Gaussian hypercontractivity. In order to use this, we estimate the Besov norm $\CB_{p,p,-\mu}^\gamma$ for $p$ large and use the embedding
\begin{equation*}
\CB_{p,p,-\mu}^\gamma(\R^d) \hookrightarrow  \CB^{\gamma-\frac{d}{p}}_{\infty,\infty,-\mu}(\R^d).
\end{equation*}
Denoting $K_n=2^{d(n-1)}\chi(2^{n-1} \cdot)$ as in the definitions of the Besov spaces in Section \ref{SectionFunctionalSpaces}, we have
\begin{align*}
\IE\Big[\|\Delta_n\xi\|_{L_{-\mu}^p(\R^d)}^p\Big]&=\int_{\IR^d}\IE\big[\langle \xi,K_n(x-\cdot)\rangle^p\big]\langle x\rangle^{-p\mu}\drm x\\
&\le(p-1)^{\frac{p}{2}}\int_{\IR^d}\IE\big[\langle \xi,K_n(x-\cdot)\rangle^2\big]^{\frac{p}{2}}\langle x\rangle^{-p\mu}\drm x\\
&\le(p-1)^{\frac{p}{2}}\|K_n\|_{L^2(\IR^d)}^p\int_{\IR^d}\langle x\rangle^{-p\mu}\drm x
\end{align*}
using that $\xi$ is an isometry from $L^2(\IR^d)$ to $L^2(\Omega)$. This is finite as long as $p\mu>d$ and we have
\begin{equation*}
\|K_n\|_{L^2(\IR^d)}^2=2^{2d(n-1)}\|K(2^{n-1}\cdot)\|_{L^2(\IR^d)}^2=2^{d(n-1)}\|K\|_{L^2(\IR^d)}^2.
\end{equation*}
We get
\begin{equation*}
\IE\Big[\|\Delta_n\xi\|_{L_{-\mu}^p}^p\Big]\le(p-1)^{\frac{p}{2}}2^{p(n-1)\frac{d}{2}}\|K\|_{L^2(\IR^d)}^2\int_{\IR^d}\langle x\rangle^{-p\mu}\drm x
\end{equation*}
hence for any $\gamma<-\frac{d}{2}$, there exists a constant $C=C(\gamma)>0$ such that
\begin{equation*}
\IE\Big[2^{np\gamma}\|\Delta_n\xi\|_{L_{-\mu}^p}^p\Big]\le C2^{\frac{1}{2}pn(\gamma+\frac{d}{2})}.
\end{equation*}
For $p$ large enough, this is a convergent series thus
\begin{equation*}
\IE\big[\|\xi\|_{\CC_{-\mu}^{\alpha}}\big]<\infty
\end{equation*}
for any $\alpha<-\frac{d}{2}$ and $\mu>0$, which completes the proof since a random variable in $L^1(\Omega)$ is finite almost surely.
\end{proof}

Let $G\in C^{\infty}(\IR^2\backslash\{0\})$ such that $\supp\ G\subset B(0,1)$ and that $G$ coincide with the Green function of the Laplacian on a small ball around $0$. Then $X:=G*\xi$ is a solution to
\begin{equation*}
\Delta X=\xi+\varphi*\xi
\end{equation*}
with a suitable function $\varphi\in C_c^\infty(\IR^2)$. Consider a regularization of the noise $\xi_\eps=\xi*\rho_\eps$ with $\rho_\eps(\cdot)=\eps^{-2}\rho(\eps^{-1}\cdot)$ and $\rho$ a smooth positive function such that $\int_{\IR^2}\rho(x)\drm x=1$. Then $\xi_\eps$ converges to $\xi$ as $\eps$ goes to $0$ in $\CC_{-\mu}^{-\frac{d}{2}-\kappa}$ and one can consider $X_\eps:=G*\xi_\eps$ which converges to $X$ as $\eps$ goes to $0$ in $\CC^{2-\frac{d}{2}-\kappa}$. In one dimension, $\nabla X$ is a function hence the square $|\nabla X|^2$ is well-defined. In two dimensions, $\nabla X$ is only a distribution hence the square $|\nabla X|^2$ is ill-defined and the family of functions $|\nabla X_\eps|^2$ diverges as $\eps$ goes to $0$. This divergence is described by the Wick square as stated in the following proposition.

\medskip

\begin{proposition}
There exists a distribution $\Wick{|\nabla X|^2}\in\CC_{-\mu}^{-\kappa}(\IR^2)$ for any $\kappa>0$ such that
\begin{equation*}
\Wick{|\nabla X|^2}=\lim_{\eps\to0}\Big(|\nabla X_\eps|^2-\IE\big[|\nabla X_\eps|^2\big]\Big)
\end{equation*}
in $\CC_{-\mu}^{-\kappa}(\IR^2)$. Moreover, the mean $\IE\big[|\nabla X_\eps|^2\big]$ diverges as $\log(\eps)$.
\end{proposition}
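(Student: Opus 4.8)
The plan is to follow the scheme of the proof of Proposition \ref{estimate_white_noise}, combining hypercontractivity with the Besov embedding $\CB_{p,p,-\mu}^\gamma(\R^2)\hookrightarrow\CC_{-\mu}^{\gamma-\frac2p}(\R^2)$ of Lemma \ref{besov_embeddings}; the only genuinely new point is that the object to be controlled lives in the second Wiener chaos rather than the first. Writing $k_i^\eps:=\partial_iG*\rho_\eps$, we have $\partial_iX_\eps=k_i^\eps*\xi$, and since $\supp\, G\subset B(0,1)$ each $k_i^\eps$ is supported in a fixed compact set for $\eps\le1$. I set $W_\eps:=|\nabla X_\eps|^2-\IE\big[|\nabla X_\eps|^2\big]$, which is exactly the projection of $|\nabla X_\eps|^2$ onto the second chaos. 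The goal is to show that $(W_\eps)_\eps$ is Cauchy, hence convergent, in $L^p(\Omega;\CC_{-\mu}^{-\kappa})$, and to upgrade this to almost sure convergence.

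First I would reduce to a second-moment estimate: by hypercontractivity in the second Wiener chaos --- the analogue of the Gaussian hypercontractivity used for Proposition \ref{estimate_white_noise}, and valid for the increments $W_\eps-W_{\eps'}$ as well since they too are pure second-chaos variables --- one has
\[
\IE\big[|\Delta_nW_\eps(x)|^p\big]\le C_p\,\IE\big[|\Delta_nW_\eps(x)|^2\big]^{\frac p2}.
\]
Wick's theorem then gives the covariance
\[
\IE\big[W_\eps(x)W_\eps(y)\big]=2\sum_{i,j=1}^2\Gamma_{ij}^\eps(x-y)^2,\qquad\Gamma_{ij}^\eps(w)=\int_{\R^2}k_i^\eps(z)\,k_j^\eps(z-w)\,\drm z,
\]
a function of $x-y$ only, so that $\IE[|\Delta_nW_\eps(x)|^2]$ is independent of $x$. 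The kernels $\Gamma_{ij}^\eps$ are supported in a fixed compact set and converge as $\eps\to0$ to the correlation of $\partial_iG$ and $\partial_jG$, which has only a logarithmic singularity at $w=0$ because $\partial_iG\sim|w|^{-1}$ near the origin; feeding this into a Littlewood--Paley block yields the uniform bound $\IE[|\Delta_nW_\eps(x)|^2]\le C(1+n)$, the linear growth in $n$ being the signature of the regularity $-\kappa$. With $\gamma=-\kappa+\frac2p$ this gives, exactly as in Proposition \ref{estimate_white_noise},
\[
\IE\big[\|W_\eps\|_{\CB_{p,p,-\mu}^\gamma}^p\big]=\sum_{n\ge0}2^{\gamma np}\,\IE\big[\|\Delta_nW_\eps\|_{L_{-\mu}^p}^p\big]\le C_p\sum_{n\ge0}2^{(\gamma p)n}(1+n)^{\frac p2}\int_{\R^2}\langle x\rangle^{-p\mu}\,\drm x,
\]
which is finite and uniform in $\eps$ as soon as $p\mu>2$ and $p>\frac2\kappa$; the embedding then bounds $W_\eps$ uniformly in $\CC_{-\mu}^{-\kappa}$. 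A parallel estimate for the increment, in which the smallness of $\Gamma_{ij}^\eps-\Gamma_{ij}^{\eps'}$ is quantified, produces $\IE[|\Delta_n(W_\eps-W_{\eps'})(x)|^2]\le C(\eps\vee\eps')^{\theta}2^{\theta n}(1+n)$ for some $\theta\in(0,1)$; running the same summation with $p$ enlarged to absorb the factor $2^{\theta n}$ shows that $(W_\eps)_\eps$ is Cauchy in $L^p(\Omega;\CC_{-\mu}^{-\kappa})$, and a Borel--Cantelli argument along $\eps=2^{-k}$ gives the almost sure limit $\Wick{|\nabla X|^2}$.

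It remains to locate the divergence of $c_\eps:=\IE[|\nabla X_\eps|^2]$, equal to the $x$-independent constant $\sum_{i=1}^2\|k_i^\eps\|_{L^2(\R^2)}^2$ by the isometry property of $\xi$. Up to the normalisation constant of Parseval's identity, and using $\widehat G(\zeta)=-(1+\widehat\varphi(\zeta))/|\zeta|^2$ coming from $\Delta X=\xi+\varphi*\xi$, this reads
\[
c_\eps\propto\int_{\R^2}\frac{|1+\widehat\varphi(\zeta)|^2}{|\zeta|^2}\,|\widehat\rho(\eps\zeta)|^2\,\drm\zeta.
\]
The low-frequency region $|\zeta|\le1$ and the contribution of the Schwartz correction $\widehat\varphi$ remain bounded as $\eps\to0$, whereas on the annulus $1\le|\zeta|\le\eps^{-1}$ the integrand is comparable to $|\zeta|^{-2}$ with $|\widehat\rho(\eps\zeta)|\approx1$, and
\[
\int_{1\le|\zeta|\le\eps^{-1}}\frac{\drm\zeta}{|\zeta|^2}=2\pi\log\tfrac1\eps.
\]
Hence $c_\eps$ diverges like $\log\frac1\eps$, that is as $-\log\eps$, which is the announced logarithmic divergence.

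The crux of the argument is the second-moment analysis of the second paragraph: one must control the diagonal logarithmic singularity of $\Gamma_{ij}^\eps$ uniformly in $\eps$ --- this is what produces the $(1+n)$ growth compatible with the regularity $-\kappa$ and mirrors the $\log\frac1\eps$ divergence of $c_\eps$ --- and, above all, extract a quantitative convergence rate for the increments $\Gamma_{ij}^\eps-\Gamma_{ij}^{\eps'}$. It is precisely the Wick subtraction of the diverging constant $c_\eps$ that cancels the leading singularity and makes these increments summable, so that the family $(W_\eps)_\eps$ converges.
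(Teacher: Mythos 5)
Your proof is correct and is essentially the approach the paper itself endorses: the paper states this proposition without proof, deferring to \cite{debussche2019}, and the scheme it models in the proof of Proposition \ref{estimate_white_noise} --- Gaussian (here second-chaos) hypercontractivity, an explicit covariance computation, and the embedding $\CB_{p,p,-\mu}^\gamma\hookrightarrow\CC_{-\mu}^{\gamma-2/p}$ --- is exactly what you carry out for $W_\eps$, including the standard quantified increment bound to get the Cauchy property. Two cosmetic points to tighten: the block estimate should read $\IE\big[|\Delta_nW_\eps(x)|^2\big]\lesssim(1+n)^2$ rather than $(1+n)$, since the covariance $2\sum_{i,j}\Gamma_{ij}^\eps(x-y)^2$ carries a \emph{squared} logarithm on the diagonal (immaterial, as any polynomial in $n$ is absorbed by $2^{-\kappa n}$ once $p>2/\kappa$); and the boundedness of the low-frequency part of $c_\eps$ deserves one explicit line, since $|\zeta|^{-2}$ alone is not integrable at the origin in $\R^2$ --- it holds because $G$ is compactly supported and integrable, so $\widehat G$ is bounded and $|1+\widehat\varphi(\zeta)|^2|\zeta|^{-2}=|\widehat G(\zeta)|^2|\zeta|^2\to0$ as $\zeta\to0$, equivalently $\widehat\varphi(0)=-1$.
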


\medskip

In order to solve our equation, we will need the following bounds proved in \cite{debussche2019}, see Lemmas 2.7, 2.8 and 2.10. In particular, not only $X$ has sub-polynomial growth but also $e^X$ and $e^{-X}$ which will be crucial to our results. Similar bounds can be obtained on the line without the renormalization procedure, we do not give the details since it is similar.

\medskip

\begin{lemma}\label{estimate_Y_R2}
For any $\mu>0$, $\alpha \in (0,1),\beta\in\R$ and $a\in\R$, we have
\begin{equation*}
\| X_{\eps}\|_{\mathcal{C}^{\alpha}_{-\mu}(\R^2)} + \|\Wick{\nabla X_{\eps}^2}\|_{\mathcal{C}^{\alpha-1}_{-\mu}(\R^2)} + \| e^{a X_{\eps}}\|_{\mathcal{C}^{\alpha}_{-\mu}(\R^2)} + \| \varphi \ast \xi_{\eps} \|_{\mathcal{C}^{\beta}_{-\mu}(\R^2)} \leq C
\end{equation*}
with $C>0$ a random constant bounded in $L^p$ for any $p\in[1,\infty)$.
\end{lemma}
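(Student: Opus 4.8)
The plan is to bound each of the four random fields by the scheme already used for Proposition~\ref{estimate_white_noise}: estimate a weighted Besov norm $\CB_{p,p,-\mu}^\gamma$ for $p$ large via Gaussian hypercontractivity, which holds on every fixed Wiener chaos, and then conclude through the embedding $\CB_{p,p,-\mu}^\gamma(\R^2)\hookrightarrow\CC_{-\mu}^{\gamma-2/p}(\R^2)$ from Lemma~\ref{besov_embeddings}. Establishing $\IE\big[\|\cdot\|^p\big]<\infty$ for every $p$ then yields a random constant bounded in all $L^p(\Omega)$.

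First I would treat $X_\eps=G*\xi_\eps$ and $\varphi*\xi_\eps$, both of which lie in the first chaos. Writing $\big(\Delta_nX_\eps\big)(x)=\langle\xi_\eps,(K_n*G)(x-\cdot)\rangle$, hypercontractivity reduces the estimate to the $L^2(\R^2)$-norm of the kernel $K_n*G$; since $G$ is, up to a smooth remainder, the Green function of the Laplacian, convolution with $G$ gains two derivatives and $\|K_n*G\|_{L^2}^2\lesssim2^{-2n}$. Running the Besov computation as in Proposition~\ref{estimate_white_noise} then gives uniform-in-$\eps$ control at every regularity $\gamma<1$, hence $X_\eps\in\CC_{-\mu}^\alpha$ for all $\alpha\in(0,1)$. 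For $\varphi*\xi_\eps$ the kernel is smooth and compactly supported, so each block $\Delta_n(\varphi*\xi_\eps)$ decays arbitrarily fast in $n$ and one obtains $\CC_{-\mu}^\beta$ for any $\beta\in\R$.

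Next the Wick square $\Wick{|\nabla X_\eps|^2}$ lies in the second chaos, where hypercontractivity still gives $\IE\big[|F|^p\big]\lesssim\big(\IE[F^2]\big)^{p/2}$. By Wick's theorem its two-point function factorizes as $\IE\big[\Wick{|\nabla X_\eps|^2}(x)\,\Wick{|\nabla X_\eps|^2}(y)\big]=2\big(\IE[\nabla X_\eps(x)\cdot\nabla X_\eps(y)]\big)^2$, and subtracting the logarithmically divergent mean $\IE[|\nabla X_\eps|^2]$ is precisely what removes the singular diagonal contribution. Inserting this covariance into the weighted Besov estimate yields uniform-in-$\eps$ bounds at any regularity $\gamma<0$, that is $\Wick{|\nabla X_\eps|^2}\in\CC_{-\mu}^{\alpha-1}$ for $\alpha\in(0,1)$.

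The main obstacle is the exponential term $e^{aX_\eps}$, the only non-Gaussian quantity and the reason the weight $\mu>0$ is indispensable. I would split the weighted Hölder norm into a size part and an oscillation part. For the size, $\langle x\rangle^{-\mu}e^{aX_\eps(x)}$ is controlled using that $X_\eps$ is a stationary Gaussian field whose supremum over $B(x,1)$ grows only sub-polynomially in $|x|$; since $\mu>0$ this growth is absorbed by the polynomial weight, and a Fernique-type argument furnishes finite moments of every order. For the oscillation, the elementary bound $|e^{aX_\eps(x)}-e^{aX_\eps(y)}|\le|a|\,e^{a\max(X_\eps(x),X_\eps(y))}|X_\eps(x)-X_\eps(y)|$ combines the $\CC^\alpha$ regularity of $X_\eps$ with the exponential size estimate through Hölder's inequality in $\Omega$. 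Throughout, uniformity in $\eps$ follows from the convergence and uniform domination of the covariance kernels of $X_\eps$.
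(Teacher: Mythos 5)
Your proof cannot be compared line by line with the paper's, for the simple reason that the paper does not prove this lemma at all: it is imported verbatim from \cite{debussche2019} (Lemmas 2.7, 2.8 and 2.10 there), and the only stochastic proof the paper gives is that of Proposition \ref{estimate_white_noise}, offered ``to give a flavor of the arguments''. Judged on its own merits, your plan is correct and is essentially the argument of the cited reference. For the first-chaos fields $X_\eps=G*\xi_\eps$ and $\varphi*\xi_\eps$ and for the second-chaos field $\Wick{|\nabla X_\eps|^2}$, hypercontractivity on a fixed Wiener chaos plus the embedding $\CB^{\gamma}_{p,p,-\mu}(\R^2)\hookrightarrow\CC^{\gamma-2/p}_{-\mu}(\R^2)$ reproduces exactly the template of Proposition \ref{estimate_white_noise}; your gain of two derivatives is justified because $\Delta G=\delta_0+\psi$ with $\psi$ smooth and compactly supported, so $|\widehat G(z)|\lesssim|z|^{-2}$ at high frequency, and uniformity in $\eps$ holds since $\|\tilde\rho_\eps*f\|_{L^2}\le\|f\|_{L^2}$. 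Your treatment of $e^{aX_\eps}$ — the one term outside the chaos framework, hence inaccessible to hypercontractivity — via stationarity of $X_\eps$, Borell--TIS/Fernique tails for $\sup_{B(k,1)}|X_\eps|$, a union bound over $k\in\Z^2$ made summable by the weight $\langle k\rangle^{-p\mu}$ for $p>2/\mu$ (smaller $p$ then follow by monotonicity of $L^p(\Omega)$ norms), and the mean-value inequality for the oscillation, is also the standard route, and it correctly identifies why $\mu>0$ is indispensable there. One correction: the Wick covariance is $\IE\big[\Wick{|\nabla X_\eps|^2}(x)\,\Wick{|\nabla X_\eps|^2}(y)\big]=2\sum_{i,j}\big(\IE[\partial_iX_\eps(x)\partial_jX_\eps(y)]\big)^2$ rather than $2\big(\IE[\nabla X_\eps(x)\cdot\nabla X_\eps(y)]\big)^2$; your formula drops the off-diagonal pairings, but this is harmless since both expressions have the same logarithmic diagonal singularity, which integrates to any regularity $\gamma<0$ as you claim.
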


\begin{lemma}\label{estimate_Lp_Y_R2}
For any $\mu \in (0,1)$ and $p \in (2/\mu,\infty)$, we have
\begin{equation*}
\| \nabla X_{\eps}\|_{L^p_{-\mu}(\R^2)}^2 + \|\Wick{\nabla X_{\eps}^2}\|_{L^p_{-\mu}(\R^2)}  \leq C|\log \eps|
\end{equation*}
with $C>0$ a random constant bounded in $L^p$ for any $p\in[1,\infty)$.
\end{lemma}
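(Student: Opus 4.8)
The two quantities to be bounded live in fixed Wiener chaoses, and this is what I would exploit throughout. For each $x$, writing $K_\eps:=(\nabla G)\ast\rho_\eps$ so that $\nabla X_\eps=K_\eps\ast\xi$, the field $\nabla X_\eps(x)$ belongs to the first chaos, while $\Wick{|\nabla X_\eps|^2}(x)=|\nabla X_\eps(x)|^2-\IE\big[|\nabla X_\eps(x)|^2\big]$ belongs to the second. The plan is to reduce every moment to a variance by Gaussian hypercontractivity, exactly as in Proposition \ref{estimate_white_noise}, compute these variances pointwise in $x$, and then integrate against the weight, whose integrability is what forces the hypothesis $p>2/\mu$.

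First I would record the pointwise variances. Since $\xi$ is an $L^2$-isometry and the fields are stationary, $\IE\big[|\nabla X_\eps(x)|^2\big]=\|K_\eps\|_{L^2(\R^2)}^2$ is independent of $x$. Because $G$ coincides with the Green function near the origin, $\nabla G$ carries a non-$L^2$ singularity of size $|y|^{-1}$; a Plancherel computation $\|K_\eps\|_{L^2}^2=\int|\widehat{\nabla G}(\zeta)|^2|\widehat\rho(\eps\zeta)|^2\,\dd\zeta$ with $|\widehat{\nabla G}(\zeta)|^2\sim|\zeta|^{-2}$ at high frequency then gives $\|K_\eps\|_{L^2}^2\sim\int_{1\le|\zeta|\lesssim1/\eps}|\zeta|^{-2}\,\dd\zeta\sim|\log\eps|$, the cut-off at $1/\eps$ coming from $\widehat\rho(\eps\zeta)$. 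For the Wick square I would apply Isserlis' formula: with $\sigma_\eps^2$ the common variance of the $\partial_i X_\eps(x)$, one finds $\IE\big[\Wick{|\nabla X_\eps|^2}(x)^2\big]=\mathrm{Var}\big(|\nabla X_\eps(x)|^2\big)\le C\sigma_\eps^4\le C|\log\eps|^2$, again uniformly in $x$.

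I would then assemble the weighted moment bounds. Hypercontractivity gives $\IE\big[|\nabla X_\eps(x)|^p\big]\le C_p\|K_\eps\|_{L^2}^p\le C_p|\log\eps|^{p/2}$ and $\IE\big[|\Wick{|\nabla X_\eps|^2}(x)|^p\big]\le C_p|\log\eps|^p$, both uniform in $x$. Integrating against $\langle x\rangle^{-\mu p}$ and using that $\int_{\R^2}\langle x\rangle^{-\mu p}\,\dd x$ is finite precisely when $p>2/\mu$ yields $\IE\big[\|\nabla X_\eps\|_{L^p_{-\mu}}^p\big]\le C|\log\eps|^{p/2}$ and $\IE\big[\|\Wick{|\nabla X_\eps|^2}\|_{L^p_{-\mu}}^p\big]\le C|\log\eps|^p$. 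As the chaoses are fixed, the same estimates propagate to every moment, so after dividing by $|\log\eps|$ the normalized quantities have all moments bounded uniformly in $\eps$.

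The last and most delicate step is to produce a single random constant $C$, with finite moments of all orders, for which the bound holds simultaneously for all $\eps\in(0,1]$. Here I would work along a dyadic sequence $\eps_k=2^{-k}$, where Borel--Cantelli applied to the uniform moment bounds controls $\sup_k(\cdot)/|\log\eps_k|$, and then fill the gaps by increment estimates, bounding $\IE\big[\|\nabla X_\eps-\nabla X_{\eps'}\|_{L^p_{-\mu}}^p\big]$ and its Wick analogue by a positive power of $|\eps-\eps'|$ times logarithmic factors, via $\|K_\eps-K_{\eps'}\|_{L^2}$ on the Fourier side. The main obstacle is precisely this uniformity: the increments must decay fast enough in $\eps$ to beat the growing $|\log\eps|$ normalization across the whole interval, which is where the careful covariance and Plancherel bookkeeping enters, while the chaos decomposition keeps everything at the level of second-moment computations.
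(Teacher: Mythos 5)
Your proposal addresses a statement the paper itself never proves: Lemma \ref{estimate_Lp_Y_R2} is imported verbatim from \cite{debussche2019} (Lemmas 2.7, 2.8 and 2.10 there), so there is no in-paper argument to compare against and I assess your sketch on its own merits. Its fixed-$\eps$ half is correct and is exactly in the spirit of the paper's Proposition \ref{estimate_white_noise}: stationarity and Plancherel give $\IE\big[|\nabla X_\eps(x)|^2\big]=\|(\nabla G)*\rho_\eps\|_{L^2(\R^2)}^2\sim|\log\eps|$, Isserlis gives $\IE\big[\Wick{|\nabla X_\eps|^2}(x)^2\big]\lesssim|\log\eps|^2$, hypercontractivity upgrades these to $x$-uniform $p$-th moments, and integrating the weight $\langle x\rangle^{-\mu p}$ (finite precisely when $\mu p>2$, i.e.\ $p>2/\mu$ in dimension two) together with equivalence of moments on a fixed chaos yields, for every $q$, $\IE\big[\|\nabla X_\eps\|_{L^p_{-\mu}}^{2q}\big]^{1/q}\lesssim_q|\log\eps|$ and $\IE\big[\|\Wick{|\nabla X_\eps|^2}\|_{L^p_{-\mu}}^{q}\big]^{1/q}\lesssim_q|\log\eps|$, uniformly in $\eps$. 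If the random constant is allowed to depend on $\eps$ with moments bounded uniformly in $\eps$, you are done at this point.

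The lemma is, however, used in Section \ref{Section2D} with $C$ independent of $\eps$, and it is precisely your uniformization step that fails as written. First, Borel--Cantelli fed only with moment bounds cannot control $\sup_k\|\nabla X_{\eps_k}\|^2_{L^p_{-\mu}}/|\log\eps_k|$: even with the sharp hypercontractive constants, moments of chaos variables only produce tails measured in units of the full standard deviation, $\P\big(\|\nabla X_{\eps_k}\|^2_{L^p_{-\mu}}>\lambda|\log\eps_k|\big)\le e^{-c\lambda}$, which is uniform in $k$ but not summable over $k$ for fixed $\lambda$, so the Borel--Cantelli series diverges (compare: i.i.d.\ exponential variables have all moments, yet their supremum is almost surely infinite). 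With moments alone one can only afford thresholds growing in $k$, landing on $|\log\eps|^{1+\delta}$ or $|\log\eps_k|\log k$ rather than $C|\log\eps|$. The missing ingredient is concentration around the mean at the scale of the \emph{weak} variance: Borell--TIS for the Gaussian field $\nabla X_\eps$ and a Hanson--Wright/Borell-type inequality for the second chaos, combined with the observation that the weak parameters $\sup_f\IE\big[\langle\nabla X_\eps,f\rangle^2\big]$ (supremum over the dual unit ball) are bounded uniformly in $\eps$ -- by Young's inequality, since $\nabla G\in L^r(\R^2)$ for every $r<2$ -- so that exceeding $(1+\delta)$ times the mean costs $\eps_k^{c\delta^2}$, which is summable. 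Second, your gap-filling increment estimate is false as stated: there is no bound of $\IE\|\nabla X_\eps-\nabla X_{\eps'}\|^2$ by a positive power of $|\eps-\eps'|$ up to logarithms, because for $\eps'=2\eps\to0$ the increment variance stays of order one while $|\eps-\eps'|\to0$; the correct modulus is $\log(\eps'/\eps)$ (equivalently $|\eps-\eps'|/\eps'$ within a dyadic block), i.e.\ the chaining must be performed in the variable $s=\log(1/\eps)$. Both defects are repairable, and the repaired scheme is the standard proof, but as written your final step does not deliver the stated bound.
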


\begin{lemma}\label{estimate_diff_Y_R2}
For any $\mu>0,\alpha\in (0,1)$ and $\kappa \in (0,1-\alpha)$, we have
\begin{equation*}
\|X_{\eps}-X\|_{\mathcal{C}^\alpha_{-\mu}(\R^2)}^2 + \|\mathbf{:}\nabla X_{\eps}^2\mathbf{:}-\Wick{\nabla X^2}\|_{\mathcal{C}^{\alpha-1}_{-\mu}(\R^2)}  \leq C\eps^{\kappa}
\end{equation*}
with $C>0$ a random constant bounded in $L^p$ for any $p\in[1,\infty)$.
\end{lemma}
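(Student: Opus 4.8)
The plan is to follow the scheme of Proposition~\ref{estimate_white_noise}: I would estimate the weighted Besov norm $\CB_{p,p,-\mu}^\gamma$ of each Paley-Littlewood block for $p$ large, use Gaussian hypercontractivity to reduce the $p$-th moment to a second moment inside a fixed Wiener chaos, and conclude with the embedding $\CB_{p,p,-\mu}^\gamma(\R^2)\hookrightarrow\CC_{-\mu}^{\gamma-\frac2p}(\R^2)$ of Lemma~\ref{besov_embeddings}. Both fields are stationary, so as in Proposition~\ref{estimate_white_noise} the weight is handled by the factor $\int_{\R^2}\langle x\rangle^{-p\mu}\drm x<\infty$, finite for $p\mu>2$. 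The only new ingredient is the extraction of a factor $\eps^\kappa$, which comes from the elementary bound
\begin{equation*}
\big|\widehat\rho(\eps\zeta)-1\big|\le C_s\min\big(1,(\eps|\zeta|)^s\big),\qquad s\ge0,
\end{equation*}
valid since $\rho$ is smooth with $\int_{\R^2}\rho=1$, so that $\widehat\rho(0)=1$. On the annulus $|\zeta|\sim2^n$ supporting $\Delta_n$ this gives a gain of order $(\eps2^n)^\theta$ for any $0\le\theta\le s$, which I would trade against spare regularity.

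For the first term, $X_\eps-X=G*(\rho_\eps-\delta_0)*\xi$ lies in the first chaos, so hypercontractivity applies directly. By Plancherel and stationarity,
\begin{equation*}
\IE\big[|\Delta_n(X_\eps-X)(0)|^2\big]=\int_{\R^2}|\widehat{K_n}(\zeta)|^2\,|\widehat G(\zeta)|^2\,\big|\widehat\rho(\eps\zeta)-1\big|^2\,\drm\zeta\lesssim2^{-2n}(\eps2^n)^{2\theta},
\end{equation*}
where I used $\widehat G(\zeta)\sim|\zeta|^{-2}$ away from the origin, the annulus measure $\sim2^{2n}$, and the mollifier bound above. Hypercontractivity then gives $\IE[2^{np\gamma}\|\Delta_n(X_\eps-X)\|_{L_{-\mu}^p}^p]\lesssim\eps^{p\theta}2^{np(\gamma+\theta-1)}$, a convergent series for $\gamma<1-\theta$. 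Choosing $\theta=\kappa/2$ with $\kappa<1-\alpha$, taking $p$ large and $\gamma=\alpha+\frac2p$, the embedding yields $\|X_\eps-X\|_{\CC_{-\mu}^\alpha}\lesssim\eps^{\kappa/2}$ in every $L^p(\Omega)$, hence the squared contribution is bounded by $\eps^\kappa$.

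For the second term, write $Y_\eps=\nabla X_\eps$ and $Y=\nabla X$. The Wick square $\Wick{|Y_\eps|^2}=|Y_\eps|^2-\IE|Y_\eps|^2$ belongs to the second chaos, the mean subtraction being exactly what removes the logarithmically divergent diagonal term of Lemma~\ref{estimate_Lp_Y_R2}. Hypercontractivity again reduces a block's $p$-th moment to its second moment, with a constant depending only on the chaos order. By Wick's theorem this second moment is built from squares of the covariance kernel $C_\eps(x,y)=\IE[Y_\eps(x)\otimes Y_\eps(y)]$, and for the difference I would telescope
\begin{equation*}
C_\eps^2-C^2=(C_\eps-C)(C_\eps+C),
\end{equation*}
so that the difference factor $C_\eps-C$ carries the multiplier $\widehat\rho(\eps\zeta)-1$ and hence the gain $(\eps2^n)^\theta$, while $C_\eps+C$ is controlled uniformly in $\eps$ as in Lemma~\ref{estimate_Y_R2}. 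Summing the resulting blocks against $2^{(\alpha-1)n}$ converges for $\theta<1-\alpha$, and choosing $\theta=\kappa$ gives $\|\Wick{\nabla X_\eps^2}-\Wick{\nabla X^2}\|_{\CC_{-\mu}^{\alpha-1}}\lesssim\eps^\kappa$.

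The hard part is the book-keeping in the second-chaos term: one must expand the fourth moment by Wick's theorem into products of two covariance kernels, isolate a single difference factor by the telescoping identity above, and check that the frequency summation still converges after paying $\theta$ derivatives for the $\eps$-gain. The hypothesis $\kappa<1-\alpha$ is precisely what leaves enough room to spend $\theta=\kappa$ of regularity in the negative Hölder scale $\CC_{-\mu}^{\alpha-1}$. The $L^p(\Omega)$ bound on the random constant $C$ follows from the moment bounds used throughout, as in the preceding lemmas.
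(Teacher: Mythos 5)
The paper does not actually prove this lemma: it imports it from \cite{debussche2019} (Lemmas 2.7, 2.8 and 2.10), offering Proposition \ref{estimate_white_noise} only as a sample of the technique. Your reconstruction is, in substance, exactly that technique --- Gaussian hypercontractivity inside a fixed Wiener chaos, a Fourier-side covariance computation in which the mollifier factor $\widehat\rho(\eps\zeta)-1$ is traded for regularity at rate $(\eps 2^n)^\theta$, and the weighted Besov embedding $\CB^{\gamma}_{p,p,-\mu}\hookrightarrow\CC^{\gamma-2/p}_{-\mu}$ with the weight paid for by $\int\langle x\rangle^{-p\mu}\drm x<\infty$. The exponent bookkeeping (take $\gamma=\alpha+2/p$, spend $\theta$ strictly less than $1-\alpha$, then push $p$ large, which is possible precisely because $\kappa<1-\alpha$ is an open condition) is coherent, so this is the same approach as the proof the paper points to rather than a different route.

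Two points need care if you write this out in full. First, the telescoping identity $C_\eps^2-C^2=(C_\eps-C)(C_\eps+C)$ is not literally what Wick's theorem produces: since $W=\Wick{|\nabla X_\eps|^2}-\Wick{|\nabla X|^2}$ mixes the two Gaussian fields, its covariance is the four-term expression $2\sum_{i,j}\big(\IE[Y_\eps^iY_\eps^j]^2-\IE[Y_\eps^iY^j]^2-\IE[Y^iY_\eps^j]^2+\IE[Y^iY^j]^2\big)$ involving the cross-covariances $\IE[Y_\eps^i(x)Y^j(y)]$; each pair still factors as a difference times a sum, with the difference carrying $\widehat\rho(\eps\cdot)-1$ in Fourier, so your idea survives verbatim, but the four-term expansion is the correct starting point. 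Second, the lemma asserts a single random constant $C$, bounded in every $L^p(\Omega)$, valid for all $\eps$ simultaneously, whereas your moment bounds at fixed $\eps$ only give $\sup_{\eps}\eps^{-p\kappa}\,\IE\big[\|\cdot\|^p\big]<\infty$. To upgrade, prove the estimate for some $\kappa'\in(\kappa,1-\alpha)$ and control the supremum over dyadic $\eps=2^{-j}$, the surplus $2^{-jp(\kappa'-\kappa)}$ making the series over $j$ summable; this is standard (and is how the cited reference should be read), but it is an actual step, not a notational convention.
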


\section{The deterministic equation} \label{SectionDeter}

In this section, we consider the deterministic logarithmic Schrödinger equation 
\begin{equation} \tag{logNLS} \label{logNLS}
i \partial_t u =\Delta u + V u+ \lambda u \log |u|^2
\end{equation}
with initial data $u_0\in L_{\mu_0}^2(\IR^d)\cap\CH^1(\IR^d)$ and a potential $V \in \mathcal{C}^{\infty}(\R^d)$ which has sub-polynomial growth at infinity, namely
\begin{equation*}
V  \in L^{\infty}_{-\mu}(\R^d)
\end{equation*}
for all $\mu>0$ and $d\ge1$. Two main problems prevent the use of classical arguments in order to get global well-posedness, the fact that the logarithmic nonlinearity is not Lipschitz at the origin and the growth of the potential $V$ at infinity. In order to solve the equation, we work in weighted Besov spaces with a space truncation of $V$ and a regularization of the logarithmic nonlinearity. This gives a global solution that passes to the limit with respect to the two parameters which allows to recover a solution to \eqref{logNLS}.

\medskip

\begin{proposition} \label{existence_logNLS_R_prop}
Let $0<\mu_0 \leq \frac12$ and $u_0 \in L_{\mu_0}^2\cap\CH^1$. There exists a unique solution~$u\in \mathcal{C}(\R;L_{\mu_0}^2\cap\CH^1)$ to \eqref{logNLS} with initial data $u_0$. If moreover $u_0\in\CH^2$, then $u\in \mathcal{C}(\R;L_{\mu_0}^2\cap\CH^2)$.
\end{proposition}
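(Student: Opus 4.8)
The plan is to combine a double regularization with the conservation laws of the equation, in the spirit of Cazenave--Haraux and Carles--Gallagher \cite{cazenave,carles2018}, adapted to the weighted setting. First I would remove the two sources of difficulty by approximation: truncate the potential to a bounded smooth $V_n$ (for instance $V_n=V\chi(\cdot/n)$ with $\chi$ a fixed cutoff), and regularize the nonlinearity by replacing $\log|u|^2$ with $\log(\delta+|u|^2)$, $\delta>0$. For each fixed $(n,\delta)$ the potential is bounded and the nonlinearity is $C^1$, hence locally Lipschitz on bounded sets of $\CH^1$; standard Schrödinger theory (see \cite{cazenave}) then yields a unique maximal solution $u_{n,\delta}\in\CC(\R;\CH^1)$, globalized by conservation of mass and energy, which also propagates $L^2_{\mu_0}$ and $\CH^2$ regularity. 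The whole point is to obtain bounds uniform in $(n,\delta)$ and to pass to the limit.

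Second, I would derive the uniform a priori estimates. The mass $\|u_{n,\delta}(t)\|_{L^2}$ is conserved since $V_n$ is real and $\Im(\bar u\,u\log|u|^2)=0$. The conserved energy is
\[
E(u) = \int_{\IR^d}|\nabla u|^2\,\dd x - \int_{\IR^d}V_n|u|^2\,\dd x - \lambda\int_{\IR^d}|u|^2\big(\log(\delta+|u|^2)-1\big)\,\dd x,
\]
so that $\|\nabla u\|_{L^2}^2$ equals $E(u_0)$ plus the potential and logarithmic contributions. The potential term is bounded by $C\|u\|_{L^2_{\mu_0}}^2$ because $|V_n|\le|V|\le C_\mu\langle\cdot\rangle^{\mu}$ with $\mu\le\mu_0$, and the logarithmic term is controlled by Lemma \ref{logarithmic_lemma}: its $\CH^1$–dependence enters only through $\|u\|_{\CH^1}^{d\eta/2}$ with $d\eta/2<2$, which Young's inequality absorbs into $\eps\|\nabla u\|_{L^2}^2$ at the cost of a constant depending on $\|u\|_{L^2_{\mu_0}}$ and the mass. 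This gives $\|\nabla u(t)\|_{L^2}^2\le C+C\|u(t)\|_{L^2_{\mu_0}}^2$. For the weight, differentiating $\|u\|_{L^2_{\mu_0}}^2=\int\langle x\rangle^{\mu_0}|u|^2$ and integrating by parts kills the real diagonal term and leaves $\frac{\dd}{\dd t}\|u\|_{L^2_{\mu_0}}^2 = -2\mu_0\,\Im\int\langle x\rangle^{\mu_0-2}\,\bar u\,(x\cdot\nabla u)\,\dd x$, whence, using $\mu_0\le\tfrac12$ so that $\langle x\rangle^{\mu_0-2}\le1$, one gets $\frac{\dd}{\dd t}\|u\|_{L^2_{\mu_0}}\le C\|\nabla u\|_{L^2}$. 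The two estimates are coupled, but substituting the energy bound yields $\frac{\dd}{\dd t}\|u\|_{L^2_{\mu_0}}\le C(1+\|u\|_{L^2_{\mu_0}})$, and Gronwall produces bounds on $\|u_{n,\delta}\|_{L^2_{\mu_0}}$ and then $\|\nabla u_{n,\delta}\|_{L^2}$ that are uniform in $(n,\delta)$ and finite on every compact time interval; in particular there is no finite-time blow-up, and both signs of $\lambda$ are admissible because the logarithmic nonlinearity is subquadratic.

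Third, I would pass to the limit, and here lies the main obstacle: the nonlinearity $z\mapsto z\log|z|^2$ is not Lipschitz at the origin, so no Taylor or paracontrolled expansion is available. It is handled by the Cazenave--Haraux inequality
\[
\big|\,\Im\big((z_1\log|z_1|^2-z_2\log|z_2|^2)\overline{(z_1-z_2)}\big)\,\big|\le C\,|z_1-z_2|^2, \qquad z_1,z_2\in\C,
\]
which survives the $\delta$–regularization with a constant independent of $\delta$. Testing the difference of two (regularized) solutions against itself, the Laplacian and potential contributions are purely imaginary and drop out, the parameter discrepancies $V_n-V_{n'}$ and $\log(\delta+|u|^2)-\log(\delta'+|u|^2)$ produce terms that vanish as the parameters tend to a common limit, and this inequality controls the genuine nonlinear term by $C\|u-v\|_{L^2}^2$; Gronwall then shows at once that $(u_{n,\delta})$ is Cauchy in $\CC([0,T];L^2)$ and gives the uniqueness statement for $\delta=0$. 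Interpolating this $L^2$ convergence against the uniform $\CH^1\cap L^2_{\mu_0}$ bounds (and invoking the compact embedding of Lemma \ref{besov_embeddings} where needed) identifies the limit $u$ as a solution of \eqref{logNLS}, and weak-in-time continuity is upgraded to continuity in $L^2_{\mu_0}\cap\CH^1$ from conservation of the norms.

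Finally, for $u_0\in\CH^2$ I would close an estimate on $\|\Delta u\|_{L^2}$. Writing $\nabla(u\log|u|^2)=\log|u|^2\,\nabla u + 2\Re(\bar u\,\nabla u)\,u/|u|^2$, the apparently singular factor is bounded pointwise by $2|\nabla u|$, and an analogous bookkeeping for the second derivatives shows that the dangerous contributions remain integrable once the regularization is in place. Combined with the uniform lower-order bounds from the previous step, this yields an $\CH^2$ bound uniform in $(n,\delta)$ that passes to the limit, giving $u\in\CC(\R;L^2_{\mu_0}\cap\CH^2)$ and completing the proof.
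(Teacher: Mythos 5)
Your overall scheme (truncate $V$, regularize the logarithm, conservation laws, then pass to the limit) is the same as the paper's, and your treatment of the limit is a legitimate genuinely different route: the paper sends $\delta\to0$ by a Ginibre--Velo compactness argument (weak limits, $\CC^{1/2}(I,L^2)$ equicontinuity, Arzel\`a--Ascoli on compacts, a.e. identification of the nonlinearity) and only invokes the inequality \eqref{log_equation_cazenave} for uniqueness, whereas you use the $\delta$-uniform version of \eqref{log_equation_cazenave} to show the approximating family is Cauchy in $\CC([0,T];L^2)$, which yields strong convergence with a rate and uniqueness in one stroke. This is the classical Cazenave--Haraux mechanism and it works here, but be aware that the ``parameter discrepancy'' terms are not free on $\R^d$: the pointwise bound $|v|\,\bigl|\log(\delta+|v|^2)-\log(\delta'+|v|^2)\bigr|\lesssim\sqrt{\max(\delta,\delta')}$ produces an $L^1$-in-space quantity, and $\int_{|x|\ge n}|V|\,|v|\,|w|\,\dd x$ must be made small using $|V|\lesssim\langle x\rangle^{\mu}$ with $\mu<\mu_0$; both steps require the uniform $L^2_{\mu_0}$ bounds and a splitting in space, exactly the kind of computation done in Lemma \ref{logarithmic_lemma}. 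Also, the conserved energy of the regularized equation is $\int|\nabla u|^2-\int V_n|u|^2-\lambda\int F_\delta(|u|^2)$ with $F_\delta(s)=(\delta+s)\log(\delta+s)-\delta\log\delta-s$, not the expression you wrote; this matters both for integrability at infinity and for checking that the bounds of Lemma \ref{logarithmic_lemma} apply uniformly in $\delta$.

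The genuine gap is the $\CH^2$ step. You propose to differentiate the equation twice and estimate $\|\Delta u\|_{L^2}$ directly, but then the potential contributes terms of the form $2\nabla V_n\cdot\nabla u+u\,\Delta V_n$ paired against $\overline{\Delta u}$. The proposition assumes only $V\in L^\infty_{-\mu}(\R^d)$ for all $\mu>0$ --- nothing on derivatives of $V$ --- and even in the intended application ($V=\xi_\eps$) the sup norms $\|\nabla V_n\|_{L^\infty}$, $\|\Delta V_n\|_{L^\infty}$ diverge as $n\to\infty$; absorbing them through weights would require a weighted $\CH^1$ control of $u$ that is not among the quantities you have bounded. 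So the claimed ``$\CH^2$ bound uniform in $(n,\delta)$'' does not follow, and nothing can be passed to the limit; moreover your phrase ``the dangerous contributions remain integrable once the regularization is in place'' suggests constants depending on $\delta$ (e.g. $|\log\delta|$), which would equally break the limit $\delta\to0$. The paper avoids both problems with Kato's trick: set $w=\partial_tu_n^\delta$, whose equation contains the potential only through the term $V_nw$; pairing with $\bar w$ and taking imaginary parts kills it (no derivative ever hits $V_n$), giving $\frac{\dd}{\dd t}\|w\|_{L^2}^2\le 2|\lambda|\,\|w\|_{L^2}^2$ uniformly in $(n,\delta)$, with $w(0)=\Delta u_0+V_nu_0+\lambda u_0\log(\delta+|u_0|^2)$ bounded in $L^2$ since $u_0\in L^2_{\mu_0}\cap\CH^2$ and $|V_n|\le|V|\lesssim\langle\cdot\rangle^{\mu}$. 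One then recovers $\Delta u_n^\delta=iw-V_nu_n^\delta-\lambda u_n^\delta\log(\delta+|u_n^\delta|^2)$ from the equation itself, which needs only $\|V_nu_n^\delta\|_{L^2}\lesssim\|u_n^\delta\|_{L^2_{\mu_0}}$ and Lemma \ref{logarithmic_lemma}, never a derivative of the potential. You should replace your direct second-derivative computation by this argument.
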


\medskip

\begin{proof}
Let $\chi \in \mathcal{C}^{\infty}_0(\R^d)$ a smooth positive compactly supported function such that $\supp\ \chi\subset B(0,2)$ and $\chi\equiv1$ on $B(0,1)$. Denote 
\begin{equation*}
\chi_n(x):=\chi\big(n^{-1}x\big)\quad\text{and}\quad V_n:=\chi_n V
\end{equation*} 
for $n\ge1$. Let $\delta>0$ and consider the regularized logarithmic Schrödinger equation
\begin{equation*}
i \partial_t u_n^{\delta} = \Delta u_n^{\delta} + V_n u_n^{\delta} + \lambda u_n^{\delta} \log \big(\delta+|u_n^{\delta}|^2\big) 
\end{equation*}
with initial data $u_n^\delta(0)=u_0$. As this equation is $L^2$-subcritical for any dimension and contains only bounded and smooth terms, there exists a unique solution $u_n^{\delta} \in \mathcal{C}(\R,\CH^1)$ for $u_0 \in\CH^1$, see for example \cite[Theorem 3.4.1]{cazenave}. The two conserved quantities are the mass
\begin{equation*}
M(u_n^\delta)=\int_{\IR^d}|u_n^\delta(x)|^2\drm x
\end{equation*}
and the energy 
\begin{equation*}
E_{\delta,n}(u_n^{\delta})=\int_{\R^d} |\nabla u_n^{\delta}(x)|^2\drm x- \int_{\R^d} V_n(x)|u_n^{\delta}(x)|^2\drm x  - \lambda \int_{\R^d} (\delta+|u_n^{\delta}(x)|^2) \log\big(\delta+|u_n^{\delta}(x)|^2\big)\drm x .
\end{equation*}
We first obtain an uniform bound in $\CH^1$ with respect to $\delta$. Differentiating the equation gives
\begin{equation*}
i \partial_t \nabla u_n^{\delta}  = \Delta \nabla u_n^{\delta}+ V_n \nabla u_n^{\delta} + u_n^{\delta} \nabla V_n+\lambda \nabla u_n^{\delta} \log\big(\delta+|u_n^{\delta}|^2\big) +  2 \lambda  \Re \left(\overline{u_n^{\delta}} \nabla u_n^{\delta} \right)\frac{u_n^{\delta}}{\delta+|u_n^{\delta}|^2}
\end{equation*}
which after taking the imaginary part of the scalar product with $\nabla u_n^{\delta}$ gives
\begin{align*}
\frac12 \frac{\dd}{\dd t} \| \nabla u_n^{\delta}(t)\|_{L^2}^2 & \leq |\lambda| \int_{\R^d}\frac{|u_n^{\delta}(x)|^2}{\delta+|u_n^{\delta}(x)|^2} |\nabla u_n^{\delta}(x)|^2\drm x+\Big|\int_{\R^d}\Im\big(\nabla V_n(x) u_n^{\delta}(x) \overline{\nabla u_n^{\delta}}(x)\big)\drm x\Big|\\
&\le\Big( |\lambda| + \|\nabla V_n\|_{L^{\infty}} \| u_n^{\delta}(t) \|^2_{L^2}\Big) \| \nabla u_n^{\delta}(t)\|_{L^2}^2\\
&\le\Big( |\lambda| + \|\nabla V_n\|_{L^{\infty}} \| u_0 \|^2_{L^2}\Big) \| \nabla u_n^{\delta}(t)\|_{L^2}^2
\end{align*}
using the conservation of mass. Gronwall Lemma then yields the bound
\begin{equation*}
\sup_{t\in I}\| \nabla u_n^{\delta}(t)\|_{L^2}^2 \le e^{2|I|\big(|\lambda|+\|\nabla V_n\|_{L^{\infty}}\|u_0\|^2_{L^2}\big)}\|\nabla u_0\|_{L^2}^2
\end{equation*}
for any finite interval $I\subset\R$, which is uniform in $\delta>0$ but may diverge in $n\ge1$. We now obtain a bound in $L_{\mu_0}^2$ for $u$. We have
\begin{align*}
\frac12\frac{\dd }{\dd t}\|u_n^{\delta}(t)\|_{L^2_{\mu_0} }^2&=\mu_0\int_{\R^d}\Im\Big(\frac{x \cdot \nabla u_n^{\delta}(x)}{\langle x \rangle^{2-2\mu_0}}  \overline{u_n^{\delta}(x)}\Big) \dd x \\
&\le\mu_0 \| u_n^{\delta}(t) \|_{L^2} \| \nabla u_n^{\delta}(t) \|_{L_{2\mu_0-1}^2}\\
&\le\mu_0\| u_0 \|_{L^2} \|u_n^{\delta}(t)\|_{\CH^1}
\end{align*}
since $2\mu_0\le1$ and using the mass conservation. This gives
\begin{equation*}
\sup_{t\in I}\|u_n^\delta(t)\|_{L_{\mu_0}^2}^2\le\|u_0\|_{L_{\mu_0}^2}^2+2|I|\mu_0\|u_0\|_{L^2}\sup_{t\in I}\|u_n^\delta(t)\|_{\CH^1}
\end{equation*}
for any finite interval $I\subset\R$ hence a bound of $u_n^\delta(t)$ in $L_{\mu_0}^2$ uniform in $\delta>0$ with the previous bound in $\CH^1$. We now need to take the limit $\delta$ to $0$ for the approximating sequence $(u_n^{\delta})_{\delta}$ in order to recover a weak solution $u_n$. This follows closely the compactness arguments from the seminal paper of Ginibre and Velo \cite{ginibre1985}, we give its main ingredients as this strategy will be used throughout this paper. From the uniform bounds in $\CH^1$ and $L^2_{\mu_0}$, we infer that up to extraction, not relabeled for reader's convenience, 
\begin{equation*}
u_n^{\delta} \rightharpoonup u_n \quad \text{in} \quad \CC(I,\CH^1 \cap L^2_{\mu_0})
\end{equation*}
and $\| u_n \|_{\CC(I,\CH^1 \cap L^2_{\mu_0})} \leq C \|u_0 \|_{\CH^1 \cap L^2_{\mu_0}}$ where $C=C(n,|I|,\|u_0\|_{L^2})>0$. From Lemma \ref{logarithmic_lemma}, we also get that $u^{\delta}_n \log(|u^{\delta}_n|^2+\delta)$ is uniformly bounded in $\CC(I,L^2)$ so that
\begin{equation*}
u_n^{\delta}\log(|u^{\delta}_n|^2+\delta )\rightharpoonup F \quad \text{in} \quad \CC(I,L^2)
\end{equation*}
and $\| F \|_{\CC(I,L^2)} \leq C \|u_0\|_{\CH^1 \cap L^2_{\mu_0}}$ for some $F\in\CC(I,L^2)$, again up to extraction. From the compact embedding $\CH^1 \cap L^2_{\mu_0} \hookrightarrow L^1$ and by passing to the limit $\delta$ to $0$ into the weak formulation of the regularized equation, we get
\begin{equation*} 
i \partial_t u_n = \Delta u_n +V_n u_n + \lambda F
\end{equation*}
in the sense of distributions since $V_n\in\CC^\infty$. It only remains to show that $F=u_n \log (|u_n|^2)$ which is the technical part of this compactness method.  We have
\begin{align*}
\|u(t_2)-u(t_1)\|_{L^2}^2&=\int_{t_1}^{t_2}2\Re\big\langle u(t)-u(t_1),\partial_tu(t)\big\rangle\drm t\\
&\le4|t_2-t_1|\|u\|_{\CC(I,\CH^1)}|\|\partial_tu\|_{\CC(I,\CH^{-1})}
\end{align*}
which gives a uniform bound in $\CC^{\frac12}(I,L^2)$ hence uniform equicontinuity. Restricting our attention to a compact $K\subset\IR^d$, we also naturally get a uniform bound in $\CC\big(I,\CH^1(K)\big)$. The compactness of the embedding $\CH^1(K) \hookrightarrow L^2(K)$ ensures that the set $\enstq{ u^{\delta}_n(t)}{\delta \in (0,1)}$ is relatively compact in $L^2(K)$. Up to extraction, one gets convergence in $C\big(I,\CH^1(K)\big)$ from Arzelà-Ascoli's Theorem, see for example Ginibre's lecture notes \cite[Lemma 7.7]{ginibre1995}. Since we already know that $(u^{\delta}_n)_{\delta}$ converges weakly to $u_n$ in $\CC\big(I,L^2(K)\big)$, we have
\begin{equation*}
u_n^{\delta} \rightarrow  u_n \quad \text{in} \quad \CC\big(I,L^2(K)\big)
\end{equation*}
for all finite $I \subset \R$, hence $u_n \in \CC\big(\R,L^2(K)\big)$ and $u_n(0)=u_0$. Finally, up to extraction, $u_n^{\delta}(t,x)\rightarrow u_n(t,x)$ for almost every $(t,x) \in \R \times K$, therefore
\begin{equation*}
u_n^{\delta}(t,x) \log (|u_n^{\delta}(t,x)|^2 + \delta) \rightarrow u_n(t,x) \log |u_n(t,x)|^2.
\end{equation*}
As we also have weak convergence towards $F$ and the compact $K$ is arbitrary, we infer that
\begin{equation*}
F(t,x)=u_n(t,x) \log |u_n(t,x)|^2
\end{equation*}
for almost all $(t,x) \in \R \times \R^d$, which ensures that $u_n$ is indeed a solution to
\begin{equation*}
i \partial_t u_n=\Delta u_n+V_n u_n+\lambda u_n\log |u_n|^2
\end{equation*}
with conserved mass and conserved energy
\begin{equation*}
E_n(u_n)=\int_{\R^d} |\nabla u_n(x)|^2\drm x- \int_{\R^d} V_n(x)|u_n(x)|^2\drm x  - \lambda \int_{\R^d}|u_n(x)|^2\log|u_n(x)|^2\drm x.
\end{equation*}
It only remains to pass to the limit $n$ to $+\infty$. For the potential, we have
\begin{equation*}
\Big|\int_{\IR^d}V_n(x)|u_n(t,x)|^2\drm x\Big|\le\|V\|_{L_{-\mu_0}^\infty}\|u_n(t)\|_{L_{\mu_0}^2}^2
\end{equation*}
which gives with the previous bound
\begin{equation*}
\sup_{t\in I}\Big|\int_{\IR^d}V_n(x)|u_n(t,x)|^2\drm x\Big|\le\|V\|_{L_{-\mu_0}^\infty}\|u_0\|_{L_{\mu_0}^2}^2+2|I|\mu_0\|V\|_{L_{-\mu_0}^\infty}\|u_0\|_{L^2}\sup_{t\in I}\|u_n^\delta(t)\|_{\CH^1}
\end{equation*}
since $2\mu_0\le 1$. For the logarithmic term, we have
\begin{equation*}
\Big|\int_{\R^d}|u_n(t,x)|^2\log|u_n(t,x)|^2\drm x\Big|\le C\|u_n(t)\|_{L_{\mu_0}^2}^{\frac{d\eta}{2\mu_0}}\|u_0\|_{L^2}^{2-\eta-\frac{d\eta}{2\mu_0}}+C\|u_n(t)\|_{\CH^1}^{\frac{d\eta}{2}}\|u_0\|_{L^2}^{2-\eta-\frac{d\eta}{2}}
\end{equation*}
for any $\eta>0$ small enough using Lemma \ref{logarithmic_lemma} with $\mu=0$ and the conservation of mass. With the previous bound on $\|u_n(t)\|_{L_{\mu_0}^2}$, we get
\begin{equation*}
\sup_{t\in I}\Big|\int_{\R^d}|u_n(t,x)|^2\log|u_n(t,x)|^2\drm x\Big|\le A_1+A_2|I|^{\frac{d\eta}{2\mu_0}}\sup_{t\in I}\|u_n(t)\|_{\CH^1}^{\frac{d\eta}{4\mu_0}}+A_3\sup_{t\in I}\|u_n(t)\|_{\CH^1}^{\frac{d\eta}{2}}
\end{equation*}
with $A_1,A_2,A_3>0$ constant depending on $d,\eta$ and $\|u_0\|_{L_{\mu_0}^2}$. Using the conservation of energy, we get
\begin{align*}
\sup_{t\in I}\Big|\int_{\R^d}|\nabla u_n(t,x)|^2\drm x\Big|&\le\big|E_n(u_0)\big|+\|V\|_{L_{-\mu_0}^\infty}\|u_0\|_{L_{\mu_0}^2}^2+4|I|\mu_0\|V\|_{L_{-\mu_0}^\infty}\|u_0\|_{L^2}\sup_{t\in I}\|u_n^\delta(t)\|_{\CH^1}\\
&\quad+A_1+A_2|I|^{\frac{d\eta}{2\mu_0}}\sup_{t\in I}\|u_n(t)\|_{\CH^1}^{\frac{d\eta}{4\mu_0}}+A_3\sup_{t\in I}\|u_n(t)\|_{\CH^1}^{\frac{d\eta}{2}}
\end{align*}
which, for $\eta\le\frac{4\mu_0}{d}$, gives
\begin{equation*}
\sup_{t\in I}\|\nabla u_n(t)\|_{L^2}^2\le A+B\sup_{t\in I}\|u_n(t)\|_{\CH^1}
\end{equation*}
with constant $A,B>0$ depending on $d,\mu_0,I,\|u_0\|_{L_{\mu_0}^2}$ and $E(u_0)$. This yields
\begin{equation*}
\sup_{n\ge1}\ \sup_{t\in I}\|\nabla u_n(t)\|_{L^2}^2<\infty
\end{equation*}
and allows to pass to the limit in $n$ to $+\infty$ in $\CH^1$ using Arzelà-Ascoli's Theorem as before and complete the proof of the existence. The uniqueness follows from the inequality
\begin{equation} \label{log_equation_cazenave} \tag{$\star$}
\Big|\Im\big(z\log|z|^2-z'\log|z'|^2\big)(\overline z-\overline{z'})\Big|\le4|z-z'|^2
\end{equation}
for all $z,z'\in\C$, see for example \cite[Lemma 9.3.5]{cazenave}. Indeed, for $u_1,u_2$ two solutions, $v=u_1-u_2$ satisfies
\begin{equation*}
i\partial_tv=\Delta v+Vv+\lambda\big(u_1\log|u_1|^2-u_2\log|u_2|^2\big)
\end{equation*}
hence
\begin{align*}
\frac{1}{2}\frac{\drm}{\drm t}\|v(t)\|_{L^2}^2&\le|\lambda|\int_{\IR^d}\Big|\Im\big(u_1(x)\log|u_1(x)|^2-u_2(x)\log|u_2(x)|^2\big)\big(\overline{u_1(x)}-\overline{u_2(x)}\big)\Big|\drm x\\
&\le4|\lambda|\|v(t)\|_{L^2}^2
\end{align*}
which gives $v=0$ using Gronwall Lemma.

\medskip

For $u_0\in\CH^2$, again the usual theory gives a solution $u_n^\delta\in\CC(\IR,\CH^2)$ to the truncated and regularized equation. The time derivative $v_n^\delta:=\partial_tu_n^\delta$ satisfies
\begin{equation*}
i\partial_t v_n^{\delta} = \Delta v_n^{\delta} +V_n v_n^{\delta}+ \lambda v_n^{\delta} \log\big(\delta+|u_n^{\delta}|^2\big)+2\lambda \Re \left( \overline{u_n^{\delta}} v_n^{\delta}  \right)  \frac{u_n^{\delta}}{\delta+|u_n^{\delta}|^2}
\end{equation*}
with initial data $v_n^\delta(0)=\partial_tu_n^\delta(0)=\Delta u_0+V_nu_0+\lambda u_0\log\big(\delta+|u_0|^2\big)$. Following the same path as before gives
\begin{align*}
\frac12 \frac{\dd}{\dd t} \| v_n^{\delta}(t) \|_{L^2}^2 &\le |\lambda|\Big|\int_{\R^d} \Re \big( \overline{u_n^{\delta}(t,x)}v_n^{\delta}(t,x)  \big) \frac{\Im \big(u_n^{\delta}(t,x) \overline{v_n^{\delta}(t,x)}  \big)}{\delta+|u_n^{\delta}(t,x)|^2}\drm x \Big|\\
&\le |\lambda| \|v_n^{\delta}(t) \|_{L^2}^2
\end{align*}
hence Gronwall lemma gives
\begin{equation*}
\|v_n^\delta(t)\|_{L^2}^2\le e^{2|\lambda t|}\|\Delta u_0+V_nu_0+\lambda u_0\log\big(\delta+|u_0|^2)\|_{L^2}^2
\end{equation*}
using the expression for $v_n^\delta(0)$. Since $u_0\in L_{\mu_0}^2\cap\CH^2$, we have
\begin{equation*}
\|\Delta u_0+V_nu_0\|_{L^2}<\infty
\end{equation*}
hence we only have to deal with the logarithmic term. It follows from the same proof as for Proposition \ref{logarithmic_lemma} with
\begin{equation*}
|u_0|^2\log(\delta+|u_0|^2)\le C(|u_0|^{2+\eta}+|u_0|^{2-\eta})
\end{equation*}
with $\eta>0$ arbitrary small and $C>0$ a constant independent of $\delta\in(0,1)$. In the end, we get
\begin{equation*}
\sup_{n\ge1}\ \sup_{\delta>0}\ \sup_{t\in I}\|v_n^\delta(t)\|_{L^2}^2<\infty
\end{equation*}
and, with a similar bound on the potential and logarithmic term, the equation gives
\begin{equation*}
\sup_{n\ge1}\ \sup_{\delta>0}\ \sup_{t\in I}\|u_n^\delta(t)\|_{\CH^2}<\infty
\end{equation*}
which completes the proof.
\end{proof}

\begin{remark}
Note that $\CH_\mu^1\subsetneq L_\mu^2\cap\CH^1$ for $\mu>0$ since $u\in\CH_\mu^1$ is equivalent to $x\mapsto\langle x\rangle^\mu u(x)\in\CH^1$ thus
\begin{equation*}
\|\langle x\rangle^{\mu-1}u(x)+\langle x\rangle^\mu\nabla u(x)\|_{L^2}<\infty
\end{equation*}
which is a priori not true for $u\in L_\mu^2\cap\CH^1$. However for any $\gamma,\mu\in\R$, Lemma \ref{interpolation_estimate_Besov} gives
\begin{equation*}
\|u\|_{\CH_{(1-\theta)\mu}^{\gamma\theta}}\le C\|u\|_{L_\mu^2}^{1-\theta}\|u\|_{\CH^\gamma}^\theta
\end{equation*}
with $C>0$ a constant and $\theta\in(0,1)$. In particular, the previous result implies
\begin{equation*}
u\in \CC(\R,\CH_{(1-\theta)\mu_0}^\theta)\quad\text{and}\quad u\in \CC(\R,\CH_{(1-\theta)\mu_0}^{2\theta})
\end{equation*}
for any $\theta\in(0,1)$ respectively for $u_0\in L_{\mu_0}\cap\CH^1$ and $u_0\in L_{\mu_0}\cap\CH^2$. This will be used often in the following.
\end{remark}

\section{Stochastic equation on the line} \label{Section1D}

The stochastic logarithmic Schrödinger equation 
\begin{equation} \label{SlogNLS_R} \tag{SlogNLS}
i\partial_tu =\Delta u+\xi u+\lambda u\log |u|^2 
\end{equation} 
on $\R$ has two conservation laws, namely the mass conservation
\begin{equation*}
M(u)=\int_\R|u(x)|^2\dd x
\end{equation*}
and the energy conservation
\begin{equation*}
E(u)= \int_\R|\nabla u(x)|^2\dd x - \lambda \int_\R|u(x)|^2\log|u(x)|^2\dd x - \int_\R|u(x)|^2\xi(x)\dd x.
\end{equation*}
Since the noise is a distribution, the last term in the energy has to be interpreted as a distribution bracket which is well-defined for smooth enough $u$. In one dimension, $\xi\in\CC_{-\mu}^{-\frac{1}{2}-\kappa}$ for any $\kappa,\mu>0$ hence the bracket is well-defined for $u\in\CH_{\mu'}^{\frac{1}{2}+\kappa'}$ with $\mu<\mu'$ and $\kappa<\kappa'$. In particular, the energy is well-defined for any $u\in\CH_{\mu_0}^1$ with $\mu_0>0$. Because of the irregularity of the noise, one can not hope to propagate arbitrary smoothness for the initial data, we prove the following theorem. Recall that the random field $X$ introduced before satisfies the equation
\begin{equation*}
\Delta X=\xi+\varphi*\xi
\end{equation*}
with $\varphi\in\CC_c^\infty$ a smooth compactly supported function and that $X,e^X,e^{-X}\in\CC_{-\mu}^{-\frac32-\kappa}$ for any $\mu,\kappa>0$.

\medskip

\begin{theorem}
Let $0<\mu_0\le\frac12$ and $u_0\in L_{\mu_0}^2(\R)\cap\CH^1(\R)$. There exists a unique solution $u \in \mathcal{C}\big(\R,L_{\mu_0}^2(\R)\cap\CH^1(\R)\big)$ to \eqref{SlogNLS_R} with initial data $u_0$. If moreover $u_0\in e^{-X}\CH_{\mu_0}^2$, then $u\in\CC\big(\R,e^{-X}\CH_{(1-\theta)\mu}^{2\theta}\big)$ for any $\mu<\mu_0$ and $\theta\in(0,1)$. In particular, the solution belongs to $\CC\big(\R,\CH_{(1-\frac{\gamma}{2})\mu}^\gamma\big)$ for any $\mu<\mu_0$ and $\gamma<\frac32$.
\end{theorem}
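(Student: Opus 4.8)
The plan is to treat the two assertions separately: existence and uniqueness in $L_{\mu_0}^2\cap\CH^1$ can be obtained directly by regularizing the noise, whereas the propagation of regularity forces the use of the exponential transform $v=e^{X}u$, which is exactly what removes the rough noise from the leading operator.

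\textbf{Existence and uniqueness.} Set $\xi_\eps=\xi*\rho_\eps$. For fixed $\eps$ the field $\xi_\eps$ is smooth with sub-polynomial growth, i.e. $\xi_\eps\in\CC^\infty\cap L_{-\mu}^\infty$, so Proposition \ref{existence_logNLS_R_prop} applied with potential $V=\xi_\eps$ gives a unique global $u_\eps\in\CC(\R,L_{\mu_0}^2\cap\CH^1)$ with conserved mass and energy. The work is to make the bounds uniform in $\eps$. Mass conservation gives $\|u_\eps\|_{L^2}=\|u_0\|_{L^2}$, and the weighted bound on $\|u_\eps\|_{L_{\mu_0}^2}$ follows as in Proposition \ref{existence_logNLS_R_prop} since the potential and logarithmic terms are real and drop out of the imaginary part. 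The $\CH^1$ bound comes from energy conservation, the only new term being $\int\xi_\eps|u_\eps|^2=\langle\xi_\eps,|u_\eps|^2\rangle$, which I would control through the duality estimate (Lemma \ref{duality_estimate_Besov}), the product rule (Lemma \ref{product_estimate_Besov}) and interpolation (Lemma \ref{interpolation_estimate_Besov}) between $L_{\mu_0}^2$ and $\CH^1$: since $\xi\in\CC_{-\mu}^{-\frac12-\kappa}$ in one dimension, $|u_\eps|^2\in\CB_{1,1,\mu}^{\frac12+\kappa}$ pairs with the noise and produces a sub-quadratic power of $\|u_\eps\|_{\CH^1}$, the logarithmic term being handled by Lemma \ref{logarithmic_lemma}. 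Closing the coupled estimates as in the deterministic proof yields uniform bounds. Weak-$*$ compactness then extracts a limit $u$, and the Ginibre--Velo compactness argument identifies $u_\eps\log|u_\eps|^2\to u\log|u|^2$; the term $\xi_\eps u_\eps\to\xi u$ passes to the limit in $\CD'$ because $u_\eps\to u$ strongly in $\CH_\loc^{1/2+\kappa}$ (weak $\CH^1$ plus strong local $L^2$ convergence and interpolation) while $\xi_\eps\to\xi$. Uniqueness follows verbatim from inequality \eqref{log_equation_cazenave}, using that $\int\xi|v|^2$ is real and hence invisible in the $L^2$ estimate for the difference.

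\textbf{Propagation of regularity.} Write $v_0=e^{X}u_0\in\CH_{\mu_0}^2$. Crucially, I would now solve with the $\eps$-dependent initial data $e^{-X_\eps}v_0\to u_0$ rather than $u_0$ itself, so that $v_\eps=e^{X_\eps}u_\eps$ has the fixed datum $v_0$. Using $\Delta X_\eps=\xi_\eps+\varphi*\xi_\eps$, the rough term cancels and $v_\eps$ solves
\begin{equation*}
i\partial_t v_\eps=\Delta v_\eps-2\nabla X_\eps\cdot\nabla v_\eps+(|\nabla X_\eps|^2-\varphi*\xi_\eps)v_\eps-2\lambda X_\eps v_\eps+\lambda v_\eps\log|v_\eps|^2,
\end{equation*}
whose coefficients are uniformly bounded in weighted Hölder spaces by the one-dimensional analogues of Lemmas \ref{estimate_Y_R2}--\ref{estimate_diff_Y_R2} (in one dimension $|\nabla X_\eps|^2$ needs no renormalization). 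The choice of datum is what makes the regularized Anderson Hamiltonian $H_\eps=\Delta+\xi_\eps$ act well: $H_\eps(e^{-X_\eps}v_0)=e^{-X_\eps}\big(\Delta v_0-2\nabla X_\eps\cdot\nabla v_0+(|\nabla X_\eps|^2-\varphi*\xi_\eps)v_0\big)$ lies in $L^2$ uniformly in $\eps$, because the rough $\xi_\eps$ has cancelled and only finite-regularity, sub-polynomially growing coefficients act on $v_0\in\CH_{\mu_0}^2$. I would then run the time-derivative estimate on $p_\eps:=\partial_t u_\eps$: its $L^2$ energy identity closes because $\xi_\eps$ enters the $u_\eps$-equation as a \emph{real} potential and contributes nothing to the imaginary part, exactly as in the $\CH^2$ part of Proposition \ref{existence_logNLS_R_prop}, while $\|p_\eps(0)\|_{L^2}$ is uniformly bounded by the cancellation above. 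Hence $\|H_\eps u_\eps\|_{L^2}=\|i p_\eps-\lambda u_\eps\log|u_\eps|^2\|_{L^2}$ is uniformly bounded, the logarithmic term again by Lemma \ref{logarithmic_lemma}.

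Transporting back via $e^{X_\eps}H_\eps u_\eps=\Delta v_\eps-2\nabla X_\eps\cdot\nabla v_\eps+(|\nabla X_\eps|^2-\varphi*\xi_\eps)v_\eps$, I recover a uniform bound on $\|\Delta v_\eps\|_{L_{-\kappa}^2}$: the transport term is only first order and its coefficient grows sub-polynomially, so it is absorbed at the cost of an arbitrarily small weight but no derivative, using the weighted $\CH^1$ control of $v_\eps$. Combining this uniform bound in $\CH_{-\kappa}^2$ with the uniform $L_\mu^2$ bound (from $v_\eps=e^{X_\eps}u_\eps$ and the weighted $L^2$ bound, for any $\mu<\mu_0$), Lemma \ref{interpolation_estimate_Besov} yields uniform bounds in $\CH_{(1-\theta)\mu}^{2\theta}$ for every $\theta\in(0,1)$, after re-adjusting $\mu<\mu_0$ to absorb the $\theta\kappa$ weight loss. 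Passing to the limit $\eps\to0$ and identifying $u=e^{-X}v$ with the solution of the first part by uniqueness gives $u\in\CC(\R,e^{-X}\CH_{(1-\theta)\mu}^{2\theta})$. Finally, since $e^{-X}\in\CC_{-\kappa}^{3/2-\kappa}$, the product rule of Lemma \ref{product_estimate_Besov} turns $v\in\CH_{(1-\theta)\mu}^{2\theta}$ into $u=e^{-X}v\in\CH_{(1-\gamma/2)\mu}^{\gamma}$ for $\gamma<3/2$, the threshold being precisely the Hölder regularity of $e^{-X}$.

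\textbf{Main obstacle.} The delicate point is the uniform second-order bound. The bare equation is useless for it, since $u_0\in e^{-X}\CH^2$ only gives $u_0\in\CH^{3/2-}$, so $\Delta u_0+\xi_\eps u_0$ is \emph{not} uniformly square-integrable; the transform is exactly what strips the rough noise from the principal part, and the datum $e^{-X_\eps}v_0$ is what renders the time-derivative uniformly $L^2$ at $t=0$. Everything afterwards is a matter of tracking weights through first-order and zeroth-order terms with sub-polynomially growing coefficients, and it is the combination of this weight loss with the $\CC^{3/2-}$ regularity of $e^{\pm X}$ that dictates both the drop from $\CH^2$ to $\CH^{2\theta}$ for $v$ and the ceiling $\gamma<3/2$ for $u$.
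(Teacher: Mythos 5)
Your proposal is correct and, for the existence/uniqueness part, it is essentially the paper's proof: regularize the noise, invoke Proposition \ref{existence_logNLS_R_prop} with $V=\xi_\eps$, close the uniform $\CH^1$ bound from mass/energy conservation with the pairing $\langle\xi_\eps,|u_\eps|^2\rangle$ controlled by Lemmas \ref{product_estimate_Besov} and \ref{interpolation_estimate_Besov} to a sub-quadratic power of $\|u_\eps\|_{\CH^1}$, pass to the limit by the Ginibre--Velo compactness scheme, and deduce uniqueness from \eqref{log_equation_cazenave}. The propagation part also rests on the paper's two key ideas --- the exponential transform and Kato's trick on the time derivative, with the crucial observation, which you isolate correctly, that the initial time derivative multiplied by $e^{-X}$ lies in $L^2$ precisely because the transform cancels $\xi$ --- but you organize it differently. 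The paper works in 1D directly with the limiting variable $v=e^Xu$ and runs Kato's trick on $w=\partial_t v$ for the equation with the true noise, the singular term $2\lambda\Re(\overline{v}w)v/|v|^2$ being handled by the $\delta$-regularization of the logarithm as in Proposition \ref{existence_logNLS_R_prop}; no noise regularization reappears at this stage. You instead keep $\xi_\eps$, prepare the data as $e^{-X_\eps}v_0$ so that $v_\eps$ has the fixed datum $v_0$, derive bounds uniform in $\eps$, and identify the limit with the first-part solution via uniqueness --- which is exactly the scheme the paper deploys for the 2D theorem in Section \ref{Section2D}, where it is forced, since there the Wick renormalization makes $w_\eps(0)$ diverge like $|\log\eps|$ and a Cauchy-in-$\eps$ argument is required. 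In 1D both routes succeed: yours buys fully rigorous estimates on smooth solutions at the price of an extra identification step (convergence of the $\eps$-dependent data together with uniqueness for the limit equation), while the paper's avoids that step but manipulates the time derivative of the rough limiting equation and leans on the omitted $\delta$-limit. One caveat: your Kato estimate on $p_\eps=\partial_t u_\eps$ still requires that same $\delta$-regularization for each fixed $\eps$, because the term $\Re(\overline{u_\eps}p_\eps)u_\eps/|u_\eps|^2$ is singular at zeros of $u_\eps$; your reference to the $\CH^2$ part of Proposition \ref{existence_logNLS_R_prop} does cover this, but it deserves to be said explicitly.
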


\medskip

\begin{proof}
From Proposition \ref{existence_logNLS_R_prop}, there exists a global solution $u_\eps\in\CC(\R,L_{\mu_0}^2\cap\CH^1)$ to
\begin{equation*}
i\partial_tu_\eps=\Delta u_\eps+\xi_\eps u_\eps+\lambda u_\eps\log|u_\eps|^2
\end{equation*}
with initial data $u_0\in L_{\mu_0}^2\cap\CH^1$ with conserved mass and regularized energy
\begin{equation*}
E_\eps(u_\eps)= \int_\R|\nabla u_\eps(x)|^2\dd x- \int_\R|u_\eps(x)|^2\xi_\eps(x)\dd x - \lambda \int_\R|u_\eps(x)|^2\log|u_\eps(x)|^2\dd x.
\end{equation*}
For $\eps$ small enough, we have
\begin{equation*}
\int_\R|\nabla u_\eps(t,x)|^2\dd x\le2|E(u_0)|+\Big|\lambda\int_\R|u_\eps(t,x)|^2\log|u_\eps(t,x)|^2\dd x\Big|+\Big|\int_\R|u_\eps(t,x)|^2\xi_\eps(x)\dd x\Big|
\end{equation*}
with $|E(u_0)|<\infty$ since $u_0\in L_{\mu_0}^2\cap\CH^1\subset\CH_{(1-\theta)\mu_0}^\theta$ for any $\theta\in(0,1)$. For the logarithmic term, the proof of Proposition \ref{existence_logNLS_R_prop} gives
\begin{equation*}
\sup_{t\in I}\Big|\int_{\R^d}|u_\eps(t,x)|^2\log\big(|u_\eps(t,x)|^2\big)\drm x\Big|\le A_1+A_2|I|^{\frac{\eta}{2\mu_0}}\sup_{t\in I}\|u_\eps(t)\|_{\CH^1}^{\frac{\eta}{4\mu_0}}+A_3\sup_{t\in I}\|u_\eps(t)\|_{\CH^1}^{\frac{\eta}{2}}
\end{equation*}
with $A_1,A_2,A_3>0$ constant depending on $\eta,\mu_0,\|u_0\|_{L_{\mu_0}^2}$ and $\|u_0\|_{\CH^1}$ for any finite interval $I\subset\R$. For the noise term, we have
\begin{align*}
\left|\int_\R\xi_\eps(x)|u_\eps(t,x)|^2\dd x\right|&\le C\|\xi_\eps\|_{\CC_{-2\mu}^{-\frac{1}{2}-\kappa}}\|u_\eps(t)\overline{u_\eps(t)}\|_{\CB_{1,1,2\mu}^{\frac{1}{2}+\kappa}}\\
&\le C\|\xi\|_{\CC_{-2\mu}^{-\frac{1}{2}-\kappa}}\|u_\eps(t)\|_{\CH_\mu^{\frac{1}{2}+2\kappa}}^2\\
&\lesssim C'\|\xi\|_{\CC_{-2\mu}^{-\frac{1}{2}-\kappa}}\|u_\eps(t)\|_{L_{\frac{\mu}{\frac{1}{2}-2\kappa}}^2}^{1-4\kappa}\|u_\eps(t)\|_{\CH^1}^{1+4\kappa}
\end{align*}
for $\kappa,\mu>0$ using Lemma \ref{product_estimate_Besov} and Lemma \ref{interpolation_estimate_Besov} with $C'>0$ a constant depending on $\mu$ and $\kappa$. Again following the proof of Proposition \ref{existence_logNLS_R_prop}, we have
\begin{equation*}
\sup_{t\in I}\|u_\eps(t)\|_{L_{\mu_0}^2}^2\le\|u_0\|_{L_{\mu_0}^2}^2+2|I|\mu_0\|u_0\|_{L^2}\sup_{t\in I}\|u_\eps(t)\|_{\CH^1}
\end{equation*}
hence for $\mu=\mu_0(\frac{1}{2}-2\kappa)$, we get
\begin{equation*}
\sup_{t\in I}\left|\int_\R\xi_\eps(x)|u_\eps(t,x)|^2\dd x\right|\le C\sup_{t\in I}\big(\|u_\eps(t)\|_{H^1}^{1+4\kappa}+|I|^{\frac{1}{2}-2\kappa}\|u_\eps(t)\|_{H^1}^{\frac{3}{2}+2\kappa}\big)
\end{equation*}
with $C>0$ a constant depending on $\mu_0,\kappa$ and $\|u_0\|_{L_{\mu_0}^2}$. Thus the conservation of energy gives
\begin{equation*}
\sup_{t\in I}\int_\R|\nabla u_\eps(t,x)|^2\dd x\le 2|E(u_0)|+C\sup_{t\in I}\big(\|u_\eps(t)\|_{H^1}^{1+4\kappa}+|I|^{\frac{1}{2}-2\kappa}\|u_\eps(t)\|_{H^1}^{\frac{3}{2}+2\kappa}\big)
\end{equation*}
which for $\frac{3}{2}+2\kappa<2$ gives
\begin{equation*}
\sup_{\eps>0}\ \sup_{t\in I}\|u_\eps(t)\|_{\CH^1}<\infty.
\end{equation*}
This bound gives the existence of a solution using Arzelà-Ascoli’s Theorem as before, the noise part being well controlled since $\xi\in\CC_{-\mu}^{-\frac12-\kappa}$ and that the solution is controlled in $L_{\mu_0}^2\cap\CH^1$. The proof of uniqueness is the same as in the deterministic case of Proposition \ref{existence_logNLS_R_prop}.

\medskip

For the propagation of regularity, the usual path is to suppose higher Sobolev regularity for the initial data and prove that it is conserved for positive time. Because of the roughness of the noise $\xi$, this does not hold anymore. However, one can consider initial data in the Sobolev space associated to the Anderson Hamiltonian. Consider the new variable
\begin{equation*}
v:=e^Xu
\end{equation*} 
where $X$ is a solution to
\begin{equation*}
\Delta X=\xi+\varphi*\xi
\end{equation*}
with $\varphi\in\CC_c^\infty(\R)$ is a smooth compactly supported function introduced before. It satisfies the equation
\begin{equation*}
i\partial_tv=\Delta v-2\nabla X\cdot \nabla v-2\lambda Xv+v(|\nabla X|^2-\varphi*\xi)+\lambda v\log|v|^2
\end{equation*}
with initial data $v_0=e^X u_0$. While this equation seems more complicated, it is better behaved since the roughest term is canceled. With our previous result, one gets global existence and uniqueness for initial data $v_0\in e^X(L_{\mu_0}^2\cap\CH^1)$ with $v\in\CC\big(\R,e^X(L_{\mu_0}^2\cap\CH^1)\big)$. Since $e^X,e^{-X}\in\CC_{-\mu}^{\frac{3}{2}-\kappa}$ for any $\mu,\kappa>0$, we have
\begin{equation*}
e^X(L_{\mu_0}^2\cap\CH^1)\subset \CH_{(1-\theta)\mu}^\theta
\end{equation*}
for any $\mu<\mu_0$ and $\theta\in(0,1)$ as well as
\begin{equation*}
\CH_{\mu_0}^1\subset e^X(L_\mu^2\cap\CH^1)
\end{equation*}
for any $\mu<\mu_0$. For $v_0\in\CH_{\mu_0}^1$, we then have $e^{-X}v_0\in L_\mu^2\cap\CH^1$ for $\mu<\mu_0$ thus the previous result implies the existence of a global solution $v\in\CC(\R,\CH_{(1-\theta)\mu}^\theta)$ for any $\mu<\mu_0$ and $\theta\in(0,1)$. Using Kato's trick, we now prove the propagation of regularity for $v_0\in\CH_{\mu_0}^2$. Let $w:=\partial_tv$ which formally satisfies the equation
\begin{equation*}
i\partial_tw=\Delta w-2\nabla X \cdot \nabla w-2\lambda Xw+w(|\nabla X|^2-\varphi*\xi)+\lambda w\log|v|^2+2\lambda\Re(\overline vw)\frac{v}{|v|^2}
\end{equation*}
with initial data 
\begin{equation*}
w(0)=(\partial_tv)(0)=\Delta v_0-2\nabla X \cdot \nabla v_0-2\lambda Xv_0+v_0(|\nabla X|^2-\varphi*\xi)+\lambda v_0\log|v_0|^2.
\end{equation*}
Of course, we point out that the term $2\lambda\Re(\overline vw)v/|v|^2$ might be singular, and one has to perform the same kind of regularization procedure with saturating constant $\delta>0$ as in the proof of Proposition~\ref{existence_logNLS_R_prop} and then to pass to the limit $\delta \rightarrow 0$ thanks to uniform estimates. As it follows the exact same path as we did before, we omit the details here. Since $X\in\CC_{-\mu}^{\frac{3}{2}-\kappa}$ for any $\mu,\kappa>0$ and $v_0\in\CH_{\mu_0}^2$, we get that $e^{-X}w(0)\in L^2$, which is exactly where it is important that we have canceled the roughest term $\xi$, and we can then follow the usual path. Using the equation at the level of $u$, Kato's trick gives
\begin{equation*}
\|e^{-X}w(t)\|_{L^2}^2\le e^{4|\lambda t|}\|e^{-X}w(0)\|_{L^2}^2
\end{equation*}
thus
\begin{equation*}
\|w(t)\|_{L_{-\mu}^2}^2\le e^{4|\lambda t|}\|e^X\|_{L_{-\mu}^\infty}\|e^{-X}w(0)\|_{L^2}^2
\end{equation*}
for any $\mu>0$. Using the equation on $v$, we get
\begin{align*}
\|\Delta v(t)\|_{L_{-\mu}^2}&\le\|i\partial_tv+2\nabla X \cdot \nabla v+2\lambda Xv-v(|\nabla X|^2-\varphi*\xi)+\lambda v\log|v|^2\|_{L_{-\mu}^2}\\
&\le e^{2|\lambda t|}\|e^X\|_{L_{-\mu}^\infty}^{\frac12}\|e^{-X}w(0)\|_{L^2}^2+2C\|\nabla X\|_{L_{-\mu}^\infty}\|\nabla v\|_{L_{2\mu}^2}+2C|\lambda|\|X\|_{L_{-\mu}^\infty}\|v\|_{L_{2\mu}^2}\\
&\quad+C\||\nabla X|^2-\varphi*\xi\|_{L_{-\mu}^\infty}\|v\|_{L_{2\mu}^2}+C\|v\|_{L^2_{\mu_0}}^{\frac{\eta}{2\mu_0}}\|v\|_{L^2}^{2-\eta-\frac{\eta}{2\mu_0}}+\|v\|_{H^1}^{\frac{\eta}{2}}\|v\|_{L^2}^{2+\eta-\frac{\eta}{2}}
\end{align*}
with a parameter $\eta>0$ hence $v\in \CC \big(\R,\CH_{-\mu}^2\big)$ for any $\mu>0$ small enough. Using that
\begin{equation*}
L_\mu^2\cap\CH_{-\mu'}^2\hookrightarrow\CH_{(1-\theta)\mu-\theta\mu'}^{2\theta}
\end{equation*}
for any $\theta\in(0,1)$ and $\mu'>0$, this proves that $v\in \CC \big(\R,\CH_{(1-\theta)\mu}^{2\theta}\big)$ for any $\theta\in(0,1)$ and $\mu<\mu_0$. In particular, this proves that the condition $u_0\in e^{-X}\CH_{\mu_0}^2$ implies $u\in \CC \big(\R,e^{-X}\CH_{(1-\theta)\mu}^{2\theta}\big)$ for any $\theta\in(0,1)$ and $\mu<\mu_0$ which completes the proof.
\end{proof}

\section{Stochastic equation on the plane} \label{Section2D}

In two dimensions, the noise is even more irregular and the equation becomes singular in the sense of ill-defined product. Indeed, assuming that one can construct a solution in $\CH_{\mu_0}^1$, the equation
\begin{equation*}
i\partial_tu=\Delta u+u\xi+\lambda u\log|u|^2
\end{equation*}
contains an ill-defined product since $\xi\in\CC_{-\mu}^{-1-\kappa}(\R^2)$ for any $\mu,\kappa>0$. As illustrated with the equation in one dimension, it is not expected that regularity higher than $\beta+2$ with $\beta$ being the regularity on the noise can be propagated for the solution. As done in the context of dispersive singular SPDE by Debussche and Weber \cite{debussche2018}, a way out is to consider the exponential transformation, as done in the previous section, that is $v=e^Xu$ with
\begin{equation*}
\Delta X=\xi+\varphi*\xi
\end{equation*}
for a function $\varphi\in\CC_c^\infty(\R^2)$. Then $v$ formally satisfies the equation
\begin{equation*} \label{shifted_logarithmic_eq_R2}
i \partial_t v =\Delta v - 2\nabla X\cdot\nabla v+v ( |\nabla X|^2-\varphi \ast \xi) -2\lambda Xv+ \lambda v \log|v|^2
\end{equation*}
with initial data $v_0=e^Xu_0$. This equation is still singular since in two dimensions, we have $X\in\CC_{-\mu}^{1-\kappa}$ hence $\nabla X$ is a distribution and the square $|\nabla X|^2$ is ill-defined. This is however a better behaved equation since the roughest term is canceled and the singular product is independant of the unknown $v$ as it concerns only $X$. For a regularization of the noise $\xi_\eps$, the divergence of the singular term $|\nabla X_\eps|^2$ is well known with the Wick product
\begin{equation*}
\Wick{|\nabla X|^2}\ =\lim_{\eps\to0}\Big(|\nabla X_\eps|^2-\IE\big[|\nabla X_\eps|^2\big]\Big)
\end{equation*}
in the space $\CC_{-\mu}^{-\kappa}(\R^2)$ for any $\mu,\kappa>0$. In particular, the mean $c_\eps:=\IE\big[|\nabla X_\eps|^2\big]$ diverges as a logarithm of $\eps$. Hence one can hope to prove that the solution $v_\eps$ of the modified equation
\begin{equation*}
i \partial_t v_\eps =\Delta v_\eps - 2\nabla X_\eps\cdot\nabla v_\eps+v_\eps( |\nabla X_\eps|^2-c_\eps-\varphi \ast \xi_\eps)-2\lambda X_\eps v_\eps + \lambda v_\eps \log|v_\eps|^2
\end{equation*}
converges as $\eps$ goes to $0$ to the solution of
\begin{equation*}\label{SLNLSmodif}\tag{mSlogNLS}
i \partial_t v=\Delta v-2\nabla X\cdot\nabla v+v(\Wick{|\nabla X|^2}-\varphi*\xi)-2\lambda Xv+\lambda v\log|v|^2
\end{equation*}
with initial data $v_0\in\CH_{\mu_0}^2$. The initial unknown $u_\eps$ satisfies the equation
\begin{equation*}
i\partial_tu_\eps=\Delta u_\eps+u_\eps(\xi_\eps-c_\eps)+\lambda u_\eps\log|u_\eps|^2
\end{equation*}
with initial data $u_\eps(0)=e^{-X_\eps}v_0$ and has conserved mass and conserved energy
\begin{equation*}
\int_{\R^2}|\nabla u_\eps(x)|^2\dd x - \int_{\R^2}|u_\eps(x)|^2(\xi_\eps(x)-c_\eps)\dd x- \lambda \int_{\R^2}|u_\eps(x)|^2\log|u_\eps(x)|^2\dd x.
\end{equation*}
In the new variable $v$, this gives the modified energy
\begin{align*}
E_\eps(v_\eps)&=\int_{\R^2}|\nabla v_\eps(x)|^2e^{-2X_\eps(x)}\dd x-\int_{\IR^2}|v_\eps(x)|^2(|\nabla X_\eps|^2-c_\eps-\varphi*\xi_\eps)e^{-2X_\eps(x)}\dd x \\
&\quad- \lambda \int_{\R^2}|v_\eps(x)|^2\log|v_\eps(x)|^2e^{-2X_\eps(x)}\dd x+2\lambda\int_{\R^2}|v_\eps(x)|^2X_\eps(x)e^{-2X_\eps(x)}\dd x
\end{align*}
and modified mass
\begin{equation*}
M_\eps(v_\eps)=\int_{\R^2}|v_\eps(x)|^2e^{-2X_\eps(x)}\dd x.
\end{equation*}
While this seems more complicated, this converges to the modified energy $E(v)$ and mass $M(v)$, well-defined for $v\in\CH_{\mu_0}^2$. Given a solution $v_\eps$ which converges to a solution $v$ of \eqref{SLNLSmodif}, we can interprete $u=e^{-X}v$ as a solution of the renormalized equation of \eqref{SlogNLS} and limit of $u_\eps=e^{-X_\eps}v_\eps$ in a suitable weighted space.

\medskip

\begin{theorem}
Let $v_0\in\CH_{\mu_0}^2(\R^2)$ with $0<\mu_0\le\frac13$. There exists a unique solution $v\in\CC\big(\R,\CH_{(1-\theta)\mu}^{2\theta}(\R^2)\big)$ of equation \eqref{SLNLSmodif} for any $\mu<\mu_0$ and $\theta\in(0,1)$.
\end{theorem}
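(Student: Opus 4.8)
The plan is to mirror the strategy of the one–dimensional theorem, namely to solve a regularized problem and pass to the limit, but now the regularization must also carry the Wick renormalization. First I would fix $\eps>0$ and observe that the change of variable $u_\eps:=e^{-X_\eps}v_\eps$ turns the regularized equation for $v_\eps$ into
\begin{equation*}
i\partial_t u_\eps=\Delta u_\eps+u_\eps(\xi_\eps-c_\eps)+\lambda u_\eps\log|u_\eps|^2,
\end{equation*}
which is a deterministic logarithmic Schrödinger equation with the smooth, sub-polynomially growing potential $V_\eps=\xi_\eps-c_\eps$. Proposition \ref{existence_logNLS_R_prop} then furnishes a unique global solution $u_\eps$ with initial datum $e^{-X_\eps}v_0$, which belongs to the admissible class $L^2_{\mu}\cap\CH^2$ for $\mu<\mu_0$ for each fixed $\eps$ by Lemma \ref{estimate_Y_R2}; setting $v_\eps=e^{X_\eps}u_\eps$ produces the approximate solution.

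Next I would establish bounds on $v_\eps$ that are uniform in $\eps$, using the conserved modified mass $M_\eps(v_\eps)$ and energy $E_\eps(v_\eps)$. The decisive feature is that in the $v$-formulation the roughest term $\xi$ has disappeared, so the energy only involves $\Wick{|\nabla X_\eps|^2}\in\CC_{-\mu}^{-\kappa}$ tested against $e^{-2X_\eps}|v_\eps|^2$; I would control this pairing with Lemma \ref{duality_estimate_Besov} and Lemma \ref{product_estimate_Besov}, the logarithmic contribution with Lemma \ref{logarithmic_lemma}, and the growth of the random fields with Lemmas \ref{estimate_Y_R2} and \ref{estimate_Lp_Y_R2}. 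Combined with the propagation of the spatial weight carried out exactly as in Proposition \ref{existence_logNLS_R_prop}, this should yield a uniform bound for $v_\eps$ in a weighted $\CH^1$ space.

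To reach the stated regularity I would run Kato's trick as in the line case: differentiating the equation in time and writing $\partial_t u_\eps=e^{-X_\eps}\partial_t v_\eps$, the real potential $\xi_\eps-c_\eps$ drops out of the imaginary part of the $L^2$ inner product, so an energy identity controls $\partial_t u_\eps$ by its initial value. The crucial algebraic point, exactly as in one dimension, is that the roughest term cancels, leaving
\begin{equation*}
\partial_t u_\eps(0)=-ie^{-X_\eps}\Big(\Delta v_0-2\nabla X_\eps\cdot\nabla v_0+v_0\big(\Wick{|\nabla X_\eps|^2}-\varphi*\xi_\eps\big)-2\lambda X_\eps v_0+\lambda v_0\log|v_0|^2\Big).
\end{equation*}
Using the equation to express $\Delta v_\eps$ in terms of $\partial_t v_\eps$ and the lower–order terms, and then interpolating via Lemma \ref{interpolation_estimate_Besov} between the weighted $\CH^1$ bound and this near-$\CH^2$ bound, I expect to recover a bound in $\CH_{(1-\theta)\mu}^{2\theta}$ for every $\theta<1$.

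Finally, the uniform bounds together with equicontinuity in time (read off from the equation) and the compact embeddings of Lemma \ref{besov_embeddings} allow an Arzelà–Ascoli extraction; combined with the convergence of the random fields from Lemma \ref{estimate_diff_Y_R2}, I would pass to the limit $\eps\to0$ and identify the limit as a solution of \eqref{SLNLSmodif}, treating the logarithmic nonlinearity by the Ginibre–Velo compactness argument already employed in Proposition \ref{existence_logNLS_R_prop}. Uniqueness follows from the inequality \eqref{log_equation_cazenave} transported through the exponential transform. The hard part will be the higher-regularity step: in two dimensions $\nabla X$ is a genuine distribution rather than a bounded function, so the products $\nabla X\cdot\nabla v$ and $\Wick{|\nabla X|^2}v$ are singular, only the cancellation of $\xi$ keeps them manageable, and this is precisely why the regularity cannot reach $\CH^2$ and one must stop at $2\theta<2$.
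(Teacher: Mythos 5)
Your setup (regularize, transform to $u_\eps=e^{-X_\eps}v_\eps$, apply Proposition \ref{existence_logNLS_R_prop}) and your uniform weighted $\CH^1$ bound via the modified mass and energy follow the paper's proof. But there is a genuine gap at the higher-regularity step, and it propagates into your passage to the limit. You run Kato's trick ``exactly as in one dimension'' and then invoke Arzel\`a--Ascoli; this cannot work in two dimensions, because the bound produced by Kato's trick is \emph{not} uniform in $\eps$. Indeed, the initial value
\begin{equation*}
w_\eps(0)=\Delta v_0-2\nabla X_\eps\cdot\nabla v_0+\big(|\nabla X_\eps|^2-c_\eps-\varphi*\xi_\eps\big)v_0+\lambda v_0\log|v_0|^2
\end{equation*}
contains the products $\nabla X_\eps\cdot\nabla v_0$ and $\Wick{|\nabla X_\eps|^2}v_0$ which, since $\nabla X$ is only in $\CC_{-\mu}^{-\kappa}(\R^2)$, converge as $\eps\to0$ to genuine distributions in $\CH_\mu^{-\kappa}$ and not to $L^2$ functions; hence $\|e^{-X_\eps}w_\eps(0)\|_{L^2}$ diverges, even though $\xi$ itself has been cancelled. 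The paper quantifies this divergence as logarithmic using Lemma \ref{estimate_Lp_Y_R2}, obtaining $\|v_\eps(t)\|_{\CH_{-\mu}^2}\le C|\log\eps|^\beta$. Consequently your claimed uniform $\CH_{(1-\theta)\mu}^{2\theta}$ bound is false: any interpolation involving the $\CH^2_{-\mu}$ bound inherits a power of $|\log\eps|$. The only truly uniform bound is the weighted $\CH^1$ one, and it is insufficient for a compactness argument, since passing to the limit in $\nabla X_\eps\cdot\nabla v_\eps$ requires strong convergence of $\nabla v_\eps$ in a space of positive regularity exceeding $\kappa$, which Arzel\`a--Ascoli applied to an $\CH^1$-bounded family does not deliver.

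The paper's missing ingredient, following Debussche--Weber, is a quantitative Cauchy estimate replacing compactness: writing the equation for $r=v_{\eps_1}-v_{\eps_2}$, using Lemma \ref{estimate_diff_Y_R2} (polynomial rate $\eps^\kappa$ for the convergence of the random fields) together with \eqref{log_equation_cazenave} for the logarithmic difference, and applying Gronwall with the weight $e^{-2X_{\eps_1}}$, one gets
\begin{equation*}
\|v_{\eps_1}-v_{\eps_2}\|_{\CC(I;L^2_{-\mu})}\le C\,\eps_2^{\kappa/2}|\log\eps_2|^{3/2}e^{4|\lambda||I|}\|v_0\|_{\CH_{\mu_0}^2}^\beta .
\end{equation*}
Interpolating this polynomial-rate Cauchy property against the logarithmically divergent $\CH^2_{-\mu}$ bound shows that $(v_\eps)_\eps$ is Cauchy in $\CC(I,\CH_\mu^\gamma)$ for $\gamma<2$ --- the polynomial rate absorbs the logarithm --- and it is this strong convergence that allows the singular products to pass to the limit and identifies $v$ as a solution of \eqref{SLNLSmodif}. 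You cite Lemma \ref{estimate_diff_Y_R2} only to say that the random fields converge, not in this quantitative role, so your proposal stops short of the theorem. Two smaller remarks: your uniqueness argument via \eqref{log_equation_cazenave} and the weight $e^{-2X}$ is correct and matches the paper; and you never use the hypothesis $\mu_0\le\frac13$, which the paper needs to close the weighted $\CH^1$ estimate (it ensures $2\mu-1\le-\mu$).
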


\medskip

\begin{proof}
Since $v_0\in\CH_{\mu_0}^2$, we have $e^{-X_\eps}v_0\in L_\mu^2\cap\CH^2$ for any $\mu<\mu_0$. From Proposition \ref{existence_logNLS_R_prop}, there exists a global solution $u_\eps\in\CC(\R,L_\mu^2\cap\CH^2)$ with $\mu<\mu_0$ to
\begin{equation*}
i\partial_tu_\eps=\Delta u_\eps+(\xi_\eps-c_\eps)u_\eps+\lambda u_\eps\log|u_\eps|^2
\end{equation*}
with initial data $e^{-X_\eps}v_0$ with conserved mass $M$ and regularized energy
\begin{equation*}
\int_\R|\nabla u_\eps(x)|^2\dd x- \int_\R|u_\eps(x)|^2(\xi_\eps(x)-c_\eps)\dd x - \lambda \int_\R|u_\eps(x)|^2\log|u_\eps(x)|^2\dd x.
\end{equation*}
This implies that $v_\eps\in\CC\big(\R,\CH_{(1-\theta)\mu}^{2\theta}\big)$ for any $\theta\in(0,1)$ with conserved modified mass
\begin{equation*}
M_\eps(v_\eps)=\int_{\R^2} |v_{\eps}(x)|^2 e^{-2X_{\eps}(x)} \dd x
\end{equation*}
and modified energy
\begin{align*}
E_\eps(v_\eps)&=\int_{\R^2}|\nabla v_{\eps}(x)|^2e^{-2X_\eps(x)}\dd x-\int_{\R^2}|v_{\eps}(x)|^2\big(|\nabla X_\eps(x)|^2+(\varphi*\xi_\eps)(x)-c_\eps\big)e^{-2X_\eps(x)}\dd x\\
&\quad+2\lambda\int_{\IR^2}|v_{\eps}(x)|^2 X_\eps(x) e^{-2X_{\eps}(x)} \dd x-\lambda\int_{\IR^2}|v_{\eps}(x)|^2 \log |v_{\eps}(x)|^2e^{-2X_\eps(x)}\dd x.
\end{align*}
For $\mu\in\R$, we have
\begin{align*}
\frac{1}{2}\frac{\drm}{\drm t}\|v_\eps e^{-X_\eps}\|_{L_\mu^2}^2&=\int_{\IR^2}\langle x\rangle^{2\mu}\Re\big(\partial_tv_\eps(t,x)\overline{v_\eps}(t,x)\big)e^{-2X_\eps(x)}\drm x\\
&=\int_{\IR^2}\langle x\rangle^{2\mu}\Im\Big(\big(\Delta v_\eps(t,x)-2\nabla X_\eps(x)\cdot\nabla v_\eps(t,x)\big)\overline{v_\eps}(t,x)\Big)e^{-2X_\eps(x)}\drm x\\
&=-\int_{\IR^2}\nabla(\langle x\rangle^{2\mu})\cdot\nabla v_\eps(t,x)\overline{v_\eps}(t,x)e^{-2X_\eps(x)}\drm x\\
&\le 2|\mu|\int_{\IR^2}\langle x\rangle^{2\mu-1}|\nabla v_\eps(t,x)\overline{v_\eps}(t,x)|e^{-2X_\eps(x)}\drm x\\
&\le2|\mu|\|v_\eps(t)e^{-X_\eps}\|_{L^2}\|\nabla v_\eps(t)\|_{L_{2\mu-1}^2}\\
&\le2|\mu|\|v_0e^{-X_\eps}\|_{L^2}\|v_\eps(t)\|_{\CH_{2\mu-1}^1}
\end{align*}
using the conservation of the modified mass. This gives
\begin{equation*}
\sup_{t\in I}\|v_\eps(t)e^{-X_\eps}\|_{L_\mu^2}^2\le\|v_0e^{-X_\eps}\|_{L_\mu^2}^2+4|I||\mu|\|v_0e^{-X_\eps}\|_{L^2}\sup_{t\in I}\|v_\eps(t)\|_{\CH_{2\mu-1}^1}
\end{equation*}
for any finite interval $I\subset\R$ hence
\begin{align*}
\sup_{t\in I}\|v_\eps(t)\|_{L_{\mu'}^2}^2&\le\|e^{X_\eps}\|_{L_{\mu'-\mu}^\infty}^2\sup_{t\in I}\|v_\eps(t)e^{-X_\eps}\|_{L_\mu^2}^2\\
&\le\|e^{X_\eps}\|_{L_{\mu'-\mu}^\infty}^2\|v_0e^{-X_\eps}\|_{L_\mu^2}+4|I|\mu\|e^{X_\eps}\|_{L_{\mu'-\mu}^\infty}^2\|v_0e^{-X_\eps}\|_{L^2}\sup_{t\in I}\|v_\eps(t)\|_{\CH_{2\mu-1}^1}
\end{align*}
for any $0<\mu'<\mu<\mu_0$. We now prove that $v_\eps(t)$ is bounded in $\CH_{-\mu}^1$ uniformly with respect to $t\in\IR$ and $\eps>0$ using this bound and the conservation of the modified energy. Indeed, we have
\begin{align*}
\int_{\IR^2}|\nabla &v_\eps(t,x)|^2e^{-2X_\eps(x)}\drm x=E_\eps(v_0)+\int_{\R^2}|v_{\eps}(t,x)|^2\big(|\nabla X_\eps(x)|^2+(\varphi*\xi_\eps)(x)-c_\eps\big)e^{-2X_\eps(x)}\dd x\\
&\quad-2\lambda\int_{\IR^2}|v_{\eps}(t,x)|^2 X_\eps(x) e^{-2X_{\eps}(x)} \dd x+\lambda\int_{\IR^2}|v_{\eps}(t,x)|^2 \log |v_{\eps}(t,x)|^2e^{-2X_\eps(x)}\dd x
\end{align*}
and $E_\eps(v_0)\le2|E(v_0)|$ for $\eps$ small enough. For the first two terms, we have
\begin{align*}
\Big|\int_{\R^2}|v_{\eps}(t,x)|^2\big(|\nabla X_\eps(x)|^2+&(\varphi*\xi_\eps)(x)-c_\eps\big)e^{-2X_\eps(x)}\dd x\Big| \\
&\le\||\nabla X_\eps(x)|^2+(\varphi*\xi_\eps)(x)-c_\eps\|_{\CC_{-2\mu}^{-\kappa}}\|v_\eps(t)\overline{v_\eps}(t)\|_{\CB_{1,1,2\mu}^\kappa}\\
&\le2\|\Wick{|\nabla X|^2}\|_{\CC_{-2\mu}^{-\kappa}}\|v_\eps(t)\|_{\CH_\mu^{2\kappa}}^2 \\
&\le2\|\Wick{|\nabla X|^2}\|_{\CC_{-2\mu}^{-\kappa}}\|v_\eps(t)\|_{L_{\frac{2\mu}{1-2\kappa}}^2}^{1-2\kappa}\|v_\eps(t)\|_{\CH_{-\frac{\mu}{2\kappa}}^1}^{2\kappa}&
\end{align*}
and
\begin{align*}
\Big|\int_{\IR^2}|v_{\eps}(t,x)|^2 X_\eps(x) e^{-2X_{\eps}(x)} \dd x\Big|&\le\|X_\eps e^{-2X_\eps}\|_{L_{-\mu}^\infty}\|v_\eps(t)\|_{L_\mu^2}
\end{align*}
for any $\mu,\kappa>0$. For the logarithmic term, we have
\begin{align*}
\Big|\int_{\IR^2}|v_{\eps}(t,x)|^2&\log |v_{\eps}(t,x)|^2e^{-2X_\eps(x)}\dd x\Big|\le\|e^{-2X_\eps}\|_{L_{-\mu}^\infty}\int_{\IR^2}\langle x\rangle^{2\mu}|v_{\eps}(t,x)|^2 \log |v_{\eps}(t,x)|^2\dd x\\
&\le C\|e^{-2X_\eps}\|_{L_{-\mu}^\infty}\|v_\eps(t)\|_{L^2_{\mu_0}}^{\frac{\eta}{\mu_0}+\frac{2\mu}{\mu_0}} \|v_\eps(t)\|_{L^2}^{2-\eta-\frac{\eta}{\mu_0}-\frac{2\mu}{\mu_0}} +C\|e^{-2X_\eps}\|_{L_{-\mu}^\infty}\|v_\eps(t)\|^{2+\eta}_{L^{2+\eta}_{\frac{2\mu}{2+\eta}}}
\end{align*}
using Lemma \ref{logarithmic_lemma} with $\mu<\mu_0$ and $\eta>0$ small enough. Using
\begin{equation*}
\|\nabla v_\eps(t)\|_{L_{-\mu}^2}^2\le\|e^{X_\eps}\|_{L_{-\mu}^\infty}^2 \|\nabla v_\eps(t)e^{-X_\eps}\|_{L^2}^2
\end{equation*}
for any $\mu>0$, we get in the end
\begin{equation*}
\sup_{t\in I}\|\nabla v_\eps(t)\|_{L_{-\mu}^2}^2\le A+B\sup_{t\in I}\|v_\eps(t)\|_{\CH_{2\mu-1}^1}^\beta
\end{equation*}
with $\beta<2$ and $A,B>0$ constants depending on the noise and $v_0$ hence
\begin{equation*}
\sup_{\eps>0}\ \sup_{t\in I}\|v_\eps(t)\|_{\CH_{-\mu}^1}<\infty
\end{equation*}
for $\mu\le\frac13$ to ensure $2\mu-1\le-\mu$. This gives
\begin{equation*}
\sup_{\eps>0}\ \sup_{t\in I}\|v_\eps(t)\|_{L_{\mu_0}^2}<\infty
\end{equation*}
since $\mu_0\le\frac13$ and interpolation yields
\begin{equation*}
\sup_{\eps>0}\ \sup_{t\in I}\|v_\eps(t)\|_{\CH_{(1-\theta)\mu}^\theta}<\infty
\end{equation*}
for any $\theta\in(0,1)$ and $\mu<\mu_0$. For the $\CH^2$ bound, we use again Kato's trick with $w_\eps=\partial_tv_\eps$. Using the equation at the level of $\partial_tu_\eps$, we have
\begin{equation*}
\|e^{-X_\eps}w_\eps(t)\|_{L^2}^2\le e^{4|\lambda t|}\|e^{-X_\eps}w_\eps(0)\|_{L^2}^2
\end{equation*}
while the equation on $v_\eps$ gives
\begin{equation*}
w_\eps(0)=\Delta v_0-2\nabla X_\eps\cdot\nabla v_0+(|\nabla X_\eps|^2+\varphi*\xi_\eps-c_\eps)v_0+\lambda v_0\log|v_0|^2.
\end{equation*}
As $\eps$ goes to $0$, $w_\eps(0)$ converges to a distribution in $\CH_\mu^{-\kappa}$ for any $\mu<\mu_0$ and $\kappa>0$ since $v_0\in\CH_{\mu_0}^2$. Thus $\|e^{-X_\eps}w_\eps(0)\|_{L^2}$ diverges as $\eps$ goes to $0$ due to local irregularity. Note that on the torus, it is enough to use Gagliargo-Nirenberg inequality since $\nabla X_\eps$ and $\Wick{|\nabla X_\eps|^2}$ diverges as $\log(\eps)$ in $L^4(\IT^2)$, see \cite{debussche2018}. Following \cite{debussche2019}, we use instead
\begin{equation*}
\| \nabla X_\eps\|_{L^p_{-\mu}(\R^2)}^2 + \|\Wick{|\nabla X_\eps|^2}\|_{L^p_{-\mu}(\R^2)}\le C|\log \eps|
\end{equation*}
for $\mu\in(0,1)$ and $p>\frac{2}{\mu}$. For $\frac{1}{2}=\frac{1}{p}+\frac{1}{q}$, Hölder inequality gives
\begin{align*}
\|\nabla X_\eps\cdot\nabla v_0\|_{L_\mu^2}+\|\Wick{|\nabla X_\eps|^2}v_0\|_{L_\mu^2}&\le C\|\nabla X_\eps\|_{L_{-\mu}^p}\|\nabla v_0\|_{L_{2\mu}^q}+\|\Wick{|\nabla X_\eps|^2}\|_{L_{-\mu}^p}\|v_0\|_{L_{2\mu}^q}\\
&\le C'|\log\eps|\big(\|\nabla v_0\|_{\CH_{2\mu}^{1-\frac{2}{q}}}+\|v_0\|_{\CH_{2\mu}^{1-\frac{2}{q}}}\big)\\
&\le C'|\log\eps|\|v_0\|_{\CH_{\mu_0}^2}
\end{align*}
for $2\mu\le\mu_0$. Using Lemma \ref{logarithmic_lemma}, we get
\begin{equation*}
\|w_\eps(0)\|_{L_\mu^2}\le C|\log(\eps)|\|v_0\|_{\CH_{\mu_0}^2}
\end{equation*}
thus
\begin{align*}
\|w_\eps(t)\|_{L_{-\mu}^2}&\le\|e^{X_\eps}\|_{L_{-\mu}^\infty}\|e^{-X_\eps}w_\eps(t)\|_{L^2}^2\\
&\le C|\log(\eps)|e^{4|\lambda t|}\|e^{X_\eps}\|_{L_{-\mu}^\infty}\|e^{-X_\eps}\|_{L_{-\mu}^\infty}\|v_0\|_{\CH_{\mu_0}^2}
\end{align*}
for any $\mu\in(0,1)$. Using the equation on $v$ and similar bounds, this gives
\begin{equation*}
\|v_\eps(t)\|_{\CH_{-\mu}^2}\le C|\log\eps|^\beta
\end{equation*}
for some $\beta>0$ and random positive constant $C>0$ independant of $\eps\in(0,1)$. As announced before, this $\CH^2_{-\mu}$ bound is not uniform with respect to $\eps$ so we can not conclude directly by compactness arguments. Instead, as performed in \cite{debussche2018}, we are going to show that the sequence $(v_{\eps})_{\eps}$ is a Cauchy sequence in $L^2_{\mu}$ with a polynomial rate in $\eps$, hence we will recover a limit in $\CH^{2\theta}_{(1-\theta)\mu}$ by interpolation between $L^2_{\mu}$ and $H^2_{-\mu}$, taking advantage of the fact that the divergence in $\CH^2_{-\mu}$ is only logarithmic in $\eps$ and can therefore be absorb by a polynomial decrease in $\eps$. Let's denote $\eps_2>\eps_1>0$ and $r=v_{\eps_1}-v_{\eps_2}$, which satisfies
\begin{align*} 
i \partial_t r &= \Delta r -2 \nabla r \cdot \nabla X_{\eps_1}+ r(\Wick{|\nabla X_{\eps_1}|^2}-\varphi*\xi_{\eps_1}) -2\lambda X_{\eps_1} r +2 \nabla v_{\eps_2} \cdot \nabla (X_{\eps_1} -X_{\eps_2}) \\
&\quad+  v_{\eps_2} (\Wick{|\nabla X_{\eps_1}|^2}-\Wick{|\nabla X_{\eps_2}|^2}-\varphi*\xi_{\eps_1}+\varphi*\xi_{\eps_2})+2\lambda v_{\eps_2} ( X_{\eps_1} - X_{\eps_2}) \\
&\quad+\lambda (v_{\eps_1} \log |v_{\eps_1}|^2 - v_{\eps_2} \log |v_{\eps_2}|^2 )
\end{align*}
with $r(0)=0$. Multiplying the equation by $\overline{r}$, integrating over $\R^2$ and taking the imaginary part, we get
\begin{align*}
&\frac12 \frac{\dd}{\dd t} \Big( \int_{\R^2} |r(t,x)|^2 e^{-2 X_{\eps_1}(x)} \dd x \Big)= \Im \int_{\R^2} 2 \nabla v_{\eps_2}(t,x) \cdot \nabla (X_{\eps_1} -X_{\eps_2})(x)\overline{r}(t,x) e^{-2 X_{\eps_1}(x)}\dd x \\
&+\Im\int_{\R^2}v_{\eps_2}(t,x)\big(\Wick{|\nabla X_{\eps_1}|^2}-\Wick{|\nabla X_{\eps_2}|^2}-\varphi*\xi_{\eps_1}+\varphi*\xi_{\eps_2}\big)(x)\overline{r}(t,x) e^{-2 X_{\eps_1}(x)}\dd x  \\
& + \lambda \Im \int_{\R^2} \big( 2 v_{\eps_2}(t,x) ( X_{\eps_1} - X_{\eps_2})(x) +(v_{\eps_1} \log |v_{\eps_1}|^2 - v_{\eps_2} \log |v_{\eps_2}|^2 )(t,x) \big)\overline{r}(t,x) e^{-2 X_{\eps_1}(x)}\dd x.
\end{align*}
Using Lemma \ref{estimate_diff_Y_R2} alongside the previous bounds, this gives
\begin{align*}
\Big| \int_{\R^2} \nabla v_{\eps_2}(t,x) \cdot \nabla (X_{\eps_1}-X_{\eps_2})(x) &\overline{r}(t,x) e^{-2X_{\eps_1}(x)} \dd x \Big|\\
& \leq\| \nabla X_{\eps_1}- \nabla X_{\eps_2} \|_{\mathcal{C}^{-\frac12+\kappa}_{-\mu'}} \| \nabla v_{\eps_2} \overline{r} e^{-2X_{\eps_1}} \|_{\mathcal{B}^{\frac12-\kappa}_{1,1,\mu'}} \\
& \leq C \eps_2^{\kappa} \|\nabla v_{\eps_2} \|_{\CH^{\frac12-\frac{\kappa}{2}}_{\frac{\mu'}{2}}} \| r \|_{\CH^{\frac12}{\mu'}} \| e^{-2X_{\eps_1}} \|_{\mathcal{C}^{\frac12}_{-\frac{\mu'}{2}}} \\
& \leq C \eps_2^{\kappa} \| v_{\eps_2} \|_{\CH^{\frac{3}{2}}_{\frac{\mu'}{2}}} \left( \| v_{\eps_1} \|_{\mathcal{C}_T \CH^{\frac12}_{\mu'}} + \| v_{\eps_2} \|_{\mathcal{C}_T \CH^{\frac12}_{\mu'}}  \right) \\
& \leq C\eps_2^{\kappa}|\log \eps_2 |^{\frac{3}{2}} e^{3|\lambda|t} \|v_0 \|_{\CH^2_{\mu_0}}^{\alpha}
\end{align*}
and
\begin{align*}
\Big| \int_{\R^2} v_{\eps_2}(t,x)&(\Wick{|\nabla X_{\eps_1}|^2}-\Wick{|\nabla X_{\eps_2}|^2}-\varphi*\xi_{\eps_1}+\varphi*\xi_{\eps_2})(x)\overline{r}(t,x)e^{-2X_{\eps_1}(x)}\dd x  \Big|\\
& \leq\|\Wick{|\nabla X_{\eps_1}|^2}-\Wick{|\nabla X_{\eps_2}|^2}-\varphi*\xi_{\eps_1}+\varphi*\xi_{\eps_2} \|_{\mathcal{C}^{-\frac12+\kappa}_{-\mu'}} \| \nabla v_{\eps_2} \overline{r} e^{-2X_{\eps_1}} \|_{\mathcal{B}^{\frac12-\kappa}_{1,1,\mu'}} \\
& \leq C \eps_2^{\kappa} \| v_{\eps_2} \|_{\CH^{\frac{1}{2}}_{\frac{\mu'}{2}}} \| r \|_{\CH^{\frac12}_{\mu'}} \\
&\leq C \eps_2^{\kappa} \|v_0 \|_{\CH^1_{\mu_0}}^{\alpha'}
\end{align*}
as soon as $\kappa \in \left(0,\frac12 \right)$, $\mu'<\frac{\mu_0}{2}$ and constants $\alpha,\alpha'>0$. The linear term is handle the same way and we make use of the inequality \eqref{log_equation_cazenave} for the logarithmic part to get
\begin{align*} 
\Big| \int_{\R^2} \Im \left( (v_{\eps_1} \log |v_{\eps_1}|^2 - v_{\eps_2} \log |v_{\eps_2}|^2 )(\overline{v_{\eps_1}} -  \overline{v_{\eps_2}}) \right)&(t,x)e^{-2X_{\eps_1}(x)}\dd x  \Big| \\
&\leq 4 \int_{\R^2}|r(t,x)|^2e^{-2X_{\eps_1}(x)}\dd x.
\end{align*}
Gathering these bounds, Gronwall lemma thus yields
\begin{equation*}
\| v_{\eps_1}-v_{\eps_2} \|_{\mathcal{C}(I; L^2_{-\mu})} \leq C \eps_2^{\kappa/2}|\log \eps_2|^{\frac{3}{2}} e^{4|\lambda||I|} \|v_0\|_{\CH^2_{\mu_0}}^\beta
\end{equation*}
for $I \subset \R$ and some numeric constant $\beta>0$. By interpolation we then infer that 
\begin{equation*}
\|v_{\eps_1}-v_{\eps_2} \|_{\mathcal{C}_T \CH^{\gamma}_{\mu}} \leq 2 \|v_{\eps_1}-v_{\eps_2} \|_{\mathcal{C}_T L^2_{-\mu_2}}^{1-\frac{\gamma}{\gamma'}} \|v_{\eps_2}\|_{\CH^{\gamma'}_{\mu_1}}^{\frac{\gamma}{\gamma'}}
\end{equation*}
with $\gamma'\in(1,2)$, $\gamma<\gamma'$, $\mu_1<(1-\gamma/2)\mu_0$ and $ \mu=-\mu_2 +\frac{\gamma}{\gamma'}(\mu_1+\mu_2)$
which leads to 
\begin{equation*}
\|v_{\eps_1}-v_{\eps_2} \|_{\mathcal{C}(I,\CH^{\gamma}_{\mu})} \leq C \eps_2^{\frac{\kappa}{2} \left( 1-\frac{\gamma}{\gamma'} \right)} |\log \eps_2|^{\frac{3}{4}\left( 1-\frac{\gamma}{\gamma'} \right)} \|v_0\|_{H^2_{\mu_0}}^{a(\gamma,\gamma')}
\end{equation*}
where $a(\gamma,\gamma')>0$, so $(v_{\eps})_{\eps}$ is well a Cauchy sequence in $\mathcal{C}\big(I,\CH^{\gamma}_{\mu}(\R^2)\big)$ by comparative growth, and so there exists a limit function $v\in \mathcal{C}\big(I,\CH^{\gamma}_{\mu}(\R^2)\big)$ solution of the limit equation with initial data $v_0$.

\medskip

It now remains to show the pathwise uniqueness. Let $v_1$ and $v_2$ be two solutions of \eqref{SLNLSmodif} with paths in $\CC\big(I,\CH^{\gamma}_{\mu}(\R^2)\big)$ with same initial data $v_0 \in \CH^2_{\mu_0}(\R^2)$. We set $r=v_1-v_2$, which satisfies the equation
\begin{equation*}
i \partial_t r = \Delta r -2 \nabla r \cdot \nabla X + r(\Wick{|\nabla X|^2}-\varphi*\xi) -2\lambda Xr +\lambda (v_1 \log |v_1|^2 - v_2 \log |v_2|^2 ),
\end{equation*}
leading to the standard estimate
\begin{align*}
\frac{1}{2} \frac{\dd}{\dd t}  \int_{\R^2} |r(t,x)|^2 e^{-2 X(x)}\dd x &= \lambda \Im \int_{\R^2} \left(v_1 \log |v_1|^2 - v_2 \log |v_2|^2  \right)(t,x)\overline{r}(t,x) e^{-2 X(x)}\dd x \\
&\leq 4 |\lambda| \int_{\R^2} |r(t,x)|^2 e^{-2 X(x)}\dd x
\end{align*}
using equation \eqref{log_equation_cazenave}. By Gronwall lemma, we get that
\begin{equation*}
\int_{\R^2} |r(t,x)|^2 e^{-2 X(x)}\dd x \leq e^{8|\lambda|t} \int_{\R^2} |r(0,x)|^2 e^{-2 X(x)}\dd x
\end{equation*}
hence $r=0$ since $r(0)=0$ and this completes the proof.
\end{proof}

\subsection*{Acknowledgements}
Q.C. is supported by the Labex CEMPI (ANR-11-LABX-0007-01). A.M. is supported by a Simons Collaboration Grant on Wave Turbulence.

\bibliographystyle{siam} 
\bibliography{biblio.bib}

\begin{thebibliography}{10}

\bibitem{BCD}
{\sc H.~Bahouri, J.-Y. Chemin, and R.~Danchin}, {\em Fourier analysis and
  nonlinear partial differential equations}, vol.~343 of Grundlehren Math.
  Wiss., Berlin: Heidelberg, 2011.

\bibitem{BDFT23one}
{\sc I.~Bailleul, N.~V. Dang, L.~Ferdinand, and T.~D. Tô}, {\em
  {{\(\Phi^4_3\)}} measures on compact {R}iemannian $3$-manifolds}, 2023.
\newblock Preprint, archived at \url{https://arxiv.org/abs/2304.10185}.

\bibitem{carles2022survey}
{\sc R.~Carles}, {\em Logarithmic {Schr{\"o}dinger} equation and isothermal
  fluids}, EMS Surv. Math. Sci., 9 (2022), pp.~99--134.

\bibitem{carles2018}
{\sc R.~Carles and I.~Gallagher}, {\em Universal dynamics for the defocusing
  logarithmic {S}chr\"{o}dinger equation}, Duke Math. J., 167 (2018),
  pp.~1761--1801.

\bibitem{carles2004}
{\sc R.~Carles and L.~Miller}, {\em Semiclassical nonlinear {S}chr\"{o}dinger
  equations with potential and focusing initial data}, Osaka J. Math., 41
  (2004), pp.~693--725.

\bibitem{cazenave}
{\sc T.~Cazenave}, {\em Semilinear {S}chr\"{o}dinger equations}, vol.~10 of
  Courant Lecture Notes in Mathematics, New York University, Courant Institute
  of Mathematical Sciences, New York; American Mathematical Society,
  Providence, RI, 2003.

\bibitem{debussche2023}
{\sc A.~Debussche, R.~Liu, N.~Tzvetkov, and N.~Visciglia}, {\em Global
  well-posedness of the 2d nonlinear schrödinger equation with multiplicative
  spatial white noise on the full space}, 2023.
\newblock Preprint, archived at \url{https://arxiv.org/abs/2301.10825}.

\bibitem{debussche2019}
{\sc A.~Debussche and J.~Martin}, {\em Solution to the stochastic
  {S}chr\"{o}dinger equation on the full space}, Nonlinearity, 32 (2019),
  pp.~1147--1174.

\bibitem{debussche2018}
{\sc A.~Debussche and H.~Weber}, {\em The {S}chr\"{o}dinger equation with
  spatial white noise potential}, Electron. J. Probab., 23 (2018), pp.~Paper
  No. 28, 16.

\bibitem{edmunds1996}
{\sc D.~E. Edmunds and H.~Triebel}, {\em Function spaces, entropy numbers,
  differential operators}, vol.~120 of Cambridge Tracts in Mathematics,
  Cambridge University Press, Cambridge, 1996.

\bibitem{ginibre1995}
{\sc J.~Ginibre}, {\em Introduction aux équations de Schrödinger non
  linéaires}, Cours de DEA, Onze Édition, Paris, 1995.

\bibitem{ginibre1985}
{\sc J.~Ginibre and G.~Velo}, {\em The global {C}auchy problem for the
  nonlinear {S}chr\"{o}dinger equation revisited}, Ann. Inst. H. Poincar\'{e}
  Anal. Non Lin\'{e}aire, 2 (1985), pp.~309--327.

\bibitem{GIP}
{\sc M.~Gubinelli, P.~Imkeller, and N.~Perkowski}, {\em Paracontrolled
  distributions and singular {PDE}s}, Forum Math. Pi, 3 (2015), pp.~e6, 75.

\bibitem{GUZ}
{\sc M.~Gubinelli, B.~Ugurcan, and I.~Zachhuber}, {\em Semilinear evolution
  equations for the {A}nderson {H}amiltonian in two and three dimensions},
  Stoch. Partial Differ. Equ. Anal. Comput., 8 (2020), pp.~82--149.

\bibitem{Hai14}
{\sc M.~Hairer}, {\em A theory of regularity structures}, Invent. Math., 198
  (2014), pp.~269--504.

\bibitem{hairer2015}
{\sc M.~Hairer and C.~Labb\'{e}}, {\em A simple construction of the continuum
  parabolic {A}nderson model on {${\bf R}^2$}}, Electron. Commun. Probab., 20
  (2015), pp.~no. 43, 11.

\bibitem{JagannathPerkowski23}
{\sc A.~Jagannath and N.~Perkowski}, {\em A simple construction of the
  dynamical {{\(\Phi^4_3\)}} model}, Trans. Am. Math. Soc., 376 (2023),
  pp.~1507--1522.

\bibitem{MatsudaZuijlen22}
{\sc T.~Matsuda and W.~van Zuijlen}, {\em Anderson {H}amiltonians with singular
  potentials}, 2022.
\newblock Preprint, archived at \url{https://arxiv.org/abs/2211.01199}.

\bibitem{Mouzard22}
{\sc A.~Mouzard}, {\em Weyl law for the {Anderson} {Hamiltonian} on a
  two-dimensional manifold}, Ann. Inst. Henri Poincar{\'e}, Probab. Stat., 58
  (2022), pp.~1385--1425.

\bibitem{MouzardZachhuber22}
{\sc A.~Mouzard and I.~Zachhuber}, {\em {Strichartz inequalities with white
  noise potential on compact surfaces}}, Analysis \& PDEs (to appear),  (2022).

\bibitem{PerkowskiRosati21}
{\sc N.~Perkowski and T.~Rosati}, {\em A rough super-{Brownian} motion}, Ann.
  Probab., 49 (2021), pp.~908--943.

\bibitem{sickel2014}
{\sc W.~Sickel, L.~Skrzypczak, and J.~Vyb\'{\i}ral}, {\em Complex interpolation
  of weighted {B}esov and {L}izorkin-{T}riebel spaces}, Acta Math. Sin. (Engl.
  Ser.), 30 (2014), pp.~1297--1323.

\bibitem{triebel1983}
{\sc H.~Triebel}, {\em Theory of function spaces}, vol.~78 of Monographs in
  Mathematics, Birkh\"{a}user Verlag, Basel, 1983.

\bibitem{TzvetkovVisciglia22}
{\sc N.~Tzvetkov and N.~Visciglia}, {\em Global dynamics of the $2d$ {N}{L}{S}
  with white noise potential and generic polynomial nonlinearity}, 2022.
\newblock Preprint, archived at \url{https://arxiv.org/abs/2204.03280}.

\bibitem{tzvetkov2022}
{\sc N.~Tzvetkov and N.~Visciglia}, {\em Two dimensional nonlinear
  {S}chr\"{o}dinger equation with spatial white noise potential and fourth
  order nonlinearity.}, Stoch PDE: Anal Comp,  (2022).

\bibitem{Ugurcan22}
{\sc B.~E. Ugurcan}, {\em Anderson {H}amiltonian and associated {N}onlinear
  {S}tochastic {W}ave and {S}chr\"odinger equations in the full space}, 2022.
\newblock Preprint, archived at \url{https://arxiv.org/abs/2208.09352}.

\end{thebibliography}

\vspace{2cm}

\noindent \textcolor{gray}{$\bullet$} Q. Chauleur -- INRIA Lille, Univ Lille \& Laboratoire Paul Painlevé, CNRS UMR 8524 Lille, Cité Scientifique, 59655 Villeneuve-d'Ascq, France.\\
{\it E-mail}: quentin.chauleur@math.cnrs.fr

\noindent \textcolor{gray}{$\bullet$} A. Mouzard -- ENS de Lyon, CNRS, Laboratoire de Physique, F-69342 Lyon, France.\\
{\it E-mail}: antoine.mouzard@math.cnrs.fr

\end{document}